\newtheorem{theorem}{Theorem}
\newtheorem{corollary}[theorem]{Corollary}
\newtheorem{definition}[theorem]{Definition}
\newtheorem{example}[theorem]{Example}
\newtheorem{proposition}[theorem]{Proposition}
\newtheorem{remark}[theorem]{Remark}
\newenvironment{proof}[1][Proof]{\noindent\textbf{#1.} }{\ \rule{0.5em}{0.5em}}
\begin{document}

\title{Wave polynomials, transmutations and Cauchy's problem for the Klein-Gordon equation}
\author{Kira V. Khmelnytskaya$^{1}$, Vladislav V. Kravchenko$^{2}$, Sergii M.
Torba$^{2}$
\and and S\'{e}bastien Tremblay$^{3}$\thanks{Research was supported by CONACYT via
the project 166141, Mexico. Research of Sergii Torba was partially supported
by DFFD, Ukraine (GP/F32/030) and by SNSF, Switzerland (JRP IZ73Z0 of SCOPES
2009--2012). The research of S\'ebastien Tremblay is partly supported by grant from NSERC of Canada.}\\$^{1}${\footnotesize Faculty of Engineering, Autonomous University of
Queretaro, }\\{\footnotesize Cerro de las Campanas s/n, col. Las Campanas \ Quer\'{e}taro,
Qro. C.P. 76010 M\'{e}xico}\\$^{2}${\footnotesize \ Department of Mathematics, CINVESTAV del IPN, Unidad
Quer\'{e}taro }\\{\footnotesize \ Libramiento Norponiente \# 2000 Fracc. Real de Juriquilla
\ Quer\'{e}taro, Qro., CP 76230, M\'{e}xico}\\{\footnotesize vkravchenko@math.cinvestav.edu.mx}\\$^{3}${\footnotesize D\'{e}partement de math\'{e}matiques et d'informatique,
Universit\'{e} du Qu\'{e}bec,}\\{\footnotesize Trois-Rivi\`{e}res, Qu\'{e}bec, G9A 5H7, Canada}}
\maketitle

\begin{abstract}
We prove a completeness result for a class of polynomial solutions of the wave
equation called wave polynomials and construct generalized wave polynomials,
solutions of the Klein-Gordon equation with a variable coefficient. Using the
transmutation (transformation) operators and their recently discovered mapping
properties we prove the completeness of the generalized wave polynomials and
use them for an explicit construction of the solution of the Cauchy problem
for the Klein-Gordon equation. Based on this result we develop a numerical
method for solving the Cauchy problem and test its performance.

\end{abstract}

\bigskip

\section{Introduction}

Let $\Omega\subset\mathbb{C}$ be a simply connected domain. Due to the Runge
approximation theorem any harmonic in $\Omega$ function can be approximated
uniformly on any compact subset inside $\Omega$ by harmonic polynomials. The
harmonic polynomials are linear combinations of the polynomials
$\operatorname{Re}(z-z_{0})^{n}$ and $\operatorname{Im}(z-z_{0})^{n}$,
$n=0,1,\ldots,$ where $z_{0}$ is an arbitrary point in $\Omega$ and $z$ is a
complex variable. This fact reflects the completeness of the system of
harmonic polynomials $\left\{  \operatorname{Re}(z-z_{0})^{n},\
\operatorname{Im}(z-z_{0})^{n}\right\}  _{n=0}^{\infty}$ in the space of all
harmonic functions in $\Omega$ in the sense of the normal convergence.

Instead of the Laplace equation let us consider the wave equation
\begin{equation}
w_{xx}-w_{tt}=0\label{wave intro}%
\end{equation}
and instead of the complex imaginary unit let us introduce the hyperbolic
imaginary unit: $j^{2}=1$. Let $z$ denote the hyperbolic variable $z=x+jt$
\cite{Lavrentiev and Shabat}, \cite{Motter and Rosa 1998}. Analogously to the
elliptic case the system of polynomials
\begin{equation}
\big\{\operatorname{Re}(x+jt)^{n}\quad\text{and}\quad\operatorname{Im}%
(x+jt)^{n}\big\}_{n=0}^{\infty}\label{wave polynomials intro}%
\end{equation}
is an infinite system of solutions of the wave equation. Up to now, to our
best knowledge, no corresponding completeness result has been obtained. We
call the polynomials (\ref{wave polynomials intro}) and their finite linear
combinations \textbf{wave polynomials}, and one of the first results of the
present work is a Runge-type theorem establishing that any regular solution of
(\ref{wave intro}) in a closed square $\overline{R}$ with the vertices
$(\pm2b,0)$ and $(0,\pm2b)$, $b>0$ can be uniformly approximated on
$\overline{R}$ by the wave polynomials. This theorem is auxiliary for
obtaining a similar result for solutions of the Klein-Gordon equation with a
variable coefficient
\begin{equation}
u_{xx}-u_{tt}-q(x)u=0\label{KGintro}%
\end{equation}
which we consider next. The construction of an infinite system of solutions
similar to the wave polynomials was done in \cite{KRT} with the aid of L.~Bers' results on pseudoanalytic formal powers \cite{Berskniga} extended onto
the hyperbolic situation. Similarly to the wave polynomials these
\textbf{generalized wave polynomials }are components of formal powers,
solutions of a corresponding hyperbolic Vekua equation which locally behave as
powers of $z=x+jt$ but in general are not of course powers. Using recent
results from \cite{CKT} on mapping properties of transmutation operators we
show that the generalized wave polynomials are images of the wave polynomials
under the action of a transmutation operator. Due to the uniform boundedness
of the transmutation operator and of its inverse several useful properties of
the wave polynomials are preserved also in the case of their generalizations.
In particular, the expansion theorem and the Runge-type theorem result to be
valid.

All these observations lead to a new representation for the solution of the
Cauchy problem for (\ref{KGintro}). It is based on the expansion of the Cauchy
data into series in terms of a certain system of functions $\left\{
\varphi_{k}\right\}  _{k=0}^{\infty}$ which are introduced as recursive
integrals and arise in the spectral parameter power series (SPPS)
representation for solutions of Sturm-Liouville equations \cite{KrCV08},
\cite{KrPorter2010}. In \cite{KrCMA2011} a completeness of $\left\{
\varphi_{k}\right\}  _{k=0}^{\infty}$ in $L_{2}$ was proved. In \cite{KMoT}
this result was obtained for the space of continuous and piecewise
continuously differentiable functions. Here we show that the completeness of
$\left\{  \varphi_{k}\right\}  _{k=0}^{\infty}$ in the space of continuous
functions directly follows from the mapping properties of the transmutation operator and the Weierstrass approximation theorem. In \cite{KMoT} it was shown that several classical results from the
theory of power series can be generalized onto the series in terms of the
functions $\varphi_{k}$, including the Taylor formula. Here we present several
new results on the approximation of continuous functions by linear
combinations of functions $\varphi_{k}$. In particular, we show that the system of functions $\{\varphi_k\}_{k=0}^\infty$ in a real-valued case is a Tchebyshev system, prove a direct and an inverse approximation theorems and study algorithms for such approximation.

Using the results on the approximation by functions $\varphi_{k}$ we propose a
numerical method for solving the Cauchy problem for (\ref{KGintro}) and
illustrate its performance with several test examples. Once the Cauchy data
are approximated by functions $\varphi_{k}$, the approximate solution of the
Cauchy problem is written in a closed form. As for $t>0$ the approximate
solution is an exact solution of equation (\ref{KGintro}) the only task
consists in a good approximation of the Cauchy data. We show that in fact with
relatively few functions $\varphi_{k}$ involved, a remarkable accuracy is achieved.

\section{Wave polynomials}

Let us consider the wave equation
\begin{equation}
\square w=0,\qquad\square:=\frac{\partial^{2}}{\partial x^{2}}-\frac
{\partial^{2}}{\partial t^{2}} \label{wave equation}%
\end{equation}
and the following infinite family of its polynomial solutions%
\begin{equation}
\big\{ \operatorname{Re}(x+jt)^{n}\quad\text{and}\quad\operatorname{Im}%
(x+jt)^{n}\big\} _{n=0}^{\infty} \label{wave polynomials}%
\end{equation}
where $j$ is a hyperbolic imaginary unit, $j^{2}=1$.

It is easy to see that
\begin{equation}
\operatorname{Re}(x+jt)^{n}=\frac{1}{2}\big( (x+t)^{n}+(x-t)^{n}%
\big) \quad\text{and}\quad\operatorname{Im}(x+jt)^{n}=\frac{1}{2}%
\big( (x+t)^{n}-(x-t)^{n}\big) . \label{Re and Im}%
\end{equation}
Let us reorder these polynomials as follows
\[
p_{0}(x,t)=1\text{,\quad}p_{1}(x,t)=\operatorname{Re}(x+jt)=x\text{,\quad
}p_{2}(x,t)=\operatorname{Im}(x+jt)=t\text{,}%
\]%
\[
p_{3}(x,t)=\operatorname{Re}(x+jt)^{2}=x^{2}+t^{2}\text{,\quad}p_{4}%
(x,t)=\operatorname{Im}(x+jt)^{2}=2xt\text{,\ldots.}%
\]
The obtained family of solutions of (\ref{wave equation}) will be called
\textbf{wave polynomials}. It is convenient to write them also in the
following form%
\begin{equation}
p_{0}(x,t)=1\text{,\qquad}p_{m}(x,t)=
\begin{cases}
{\displaystyle \sum_{\text{even }k=0}^{\frac{m+1}{2}}\binom{\frac{m+1}{2}}{k}
x^{\frac{m+1}{2}-k}t^{k},} & m\text{ odd},\\
{\displaystyle \sum_{\text{odd }k=1}^{\frac{m}{2}}\binom{\frac{m}{2}}{k}
x^{\frac{m}{2}-k}t^{k},} & m\text{ even}.
\end{cases}
\label{pm}%
\end{equation}

Consider equation (\ref{wave equation}) together with the following Goursat
conditions%
\[
w=\varphi(x)\text{ for }x-t=0\text{ and }w=\psi(x)\text{ for }x+t=0
\quad(-b\leq x\leq b),
\]
assuming additionally that $\varphi(0)=\psi(0)$. It is well known (see, e.g.,
\cite[4.1.1-9.]{Polyanin}) that for $\varphi$ and $\psi$ belonging to
$C^{2}[-b,b]$ the solution of the Goursat problem exists, is unique and has
the form
\begin{equation}
w(x,t)=\varphi\Big(\frac{x+t}{2}\Big)+\psi\Big(\frac{x-t}{2}\Big)-\varphi(0).
\label{solution Goursat}%
\end{equation}
Its domain of definition is a closed square $\overline{R}$ with the vertices
$(\pm2b,0)$ and $(0,\pm2b)$.

\begin{proposition}
\label{Prop Solution Goursat real analytic}Let the boundary data $\varphi$ and
$\psi$ be real-analytic functions with the corresponding power series
expansions%
\begin{equation}
\varphi(x)=\sum_{n=0}^{\infty}\alpha_{n}x^{n}\quad\text{and}\quad\psi
(x)=\sum_{n=0}^{\infty}\beta_{n}x^{n},\label{powerseries}%
\end{equation}
uniformly convergent on $[-b,b]$ and satisfying necessary condition
$\varphi(0)=\psi(0)$, i.e., $\alpha_{0}=\beta_{0}$. Then the unique solution
of the Goursat problem has the form
\[
w(x,t)=\alpha_{0}p_{0}(x,t)+\sum_{n=1}^{\infty}\frac{\alpha_{n}+\beta_{n}%
}{2^{n}}p_{2n-1}(x,t)+\sum_{n=1}^{\infty}\frac{\alpha_{n}-\beta_{n}}{2^{n}%
}p_{2n}(x,t)
\]
where the series converge uniformly in $\overline{R}$.
\end{proposition}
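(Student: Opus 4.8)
The plan is to start from the explicit closed-form solution of the Goursat problem given in (\ref{solution Goursat}), namely $w(x,t)=\varphi\big(\tfrac{x+t}{2}\big)+\psi\big(\tfrac{x-t}{2}\big)-\varphi(0)$, and substitute the power-series expansions (\ref{powerseries}) directly. After substitution the solution becomes
\[
w(x,t)=\alpha_{0}+\sum_{n=1}^{\infty}\alpha_{n}\Big(\frac{x+t}{2}\Big)^{n}+\sum_{n=1}^{\infty}\beta_{n}\Big(\frac{x-t}{2}\Big)^{n},
\]
where I have used $\alpha_{0}=\beta_{0}$ to combine the constant terms. The whole task then reduces to re-expressing the monomials $(x\pm t)^{n}$ in terms of the wave polynomials $p_{2n-1}$ and $p_{2n}$.

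The key algebraic step is the identity connecting $(x\pm t)^{n}$ to the real and imaginary parts of $(x+jt)^{n}$. From (\ref{Re and Im}) I read off $(x+t)^{n}=\operatorname{Re}(x+jt)^{n}+\operatorname{Im}(x+jt)^{n}=p_{2n-1}+p_{2n}$ and likewise $(x-t)^{n}=\operatorname{Re}(x+jt)^{n}-\operatorname{Im}(x+jt)^{n}=p_{2n-1}-p_{2n}$, using the reordering $p_{2n-1}=\operatorname{Re}(x+jt)^{n}$ and $p_{2n}=\operatorname{Im}(x+jt)^{n}$ for $n\geq 1$. Substituting these into the displayed series and grouping the coefficients of $p_{2n-1}$ and $p_{2n}$ gives
\[
w(x,t)=\alpha_{0}p_{0}+\sum_{n=1}^{\infty}\frac{\alpha_{n}+\beta_{n}}{2^{n}}p_{2n-1}(x,t)+\sum_{n=1}^{\infty}\frac{\alpha_{n}-\beta_{n}}{2^{n}}p_{2n}(x,t),
\]
which is exactly the claimed formula. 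The factor $2^{-n}$ emerges naturally from the arguments $\tfrac{x\pm t}{2}$ raised to the $n$-th power.

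It remains to justify the uniform convergence on $\overline{R}$ and the legitimacy of the term-by-term manipulations. Here I would argue as follows: on $\overline{R}$ the quantities $\tfrac{x+t}{2}$ and $\tfrac{x-t}{2}$ both range over $[-b,b]$ (since the vertices are at $(\pm 2b,0)$ and $(0,\pm 2b)$, so $|x+t|\le 2b$ and $|x-t|\le 2b$ throughout the square). Therefore the series $\sum \alpha_{n}\big(\tfrac{x+t}{2}\big)^{n}$ and $\sum\beta_{n}\big(\tfrac{x-t}{2}\big)^{n}$ are just the original series (\ref{powerseries}) evaluated at points of $[-b,b]$, where they were assumed uniformly convergent. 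Hence both series converge uniformly on $\overline{R}$, the rearrangement into $p_{2n-1}$ and $p_{2n}$ is a finite linear recombination within each fixed $n$ (so it preserves uniform convergence), and the resulting series for $w$ converges uniformly. The main point requiring care — and the only genuine obstacle — is precisely this convergence bookkeeping: verifying that $\overline{R}$ is exactly the image of $[-b,b]\times[-b,b]$ under the change of variables $(x,t)\mapsto(\tfrac{x+t}{2},\tfrac{x-t}{2})$ so that no estimate outside the radius of convergence is needed. Everything else is the purely formal substitution above.
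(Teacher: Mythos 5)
Your proof is correct and follows essentially the same route as the paper: substitute the power series into the closed-form Goursat solution \eqref{solution Goursat} and regroup via the identities $(x\pm t)^{n}=p_{2n-1}\pm p_{2n}$ derived from \eqref{Re and Im}. Your additional remark that $\tfrac{x\pm t}{2}$ ranges over $[-b,b]$ on $\overline{R}$, which justifies the uniform convergence, is a point the paper leaves implicit, but it is the same argument in substance.
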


\begin{proof}
From (\ref{Re and Im}) we have
\[
p_{2n-1}(x,t)=\frac{1}{2}\big(  (x+t)^{n}+(x-t)^{n}\big)  \quad\text{and}\quad
p_{2n}(x,t)=\frac{1}{2}\big(  (x+t)^{n}-(x-t)^{n}\big)
\]
and hence
\begin{equation}
(x+t)^{n}=p_{2n-1}(x,t)+p_{2n}(x,t)\quad\text{and}\quad(x-t)^{n}%
=p_{2n-1}(x,t)-p_{2n}(x,t)\text{,\qquad}n=1,2,\ldots. \label{x+t}%
\end{equation}
From (\ref{solution Goursat}) we obtain that the solution of the Goursat
problem has the form
\[
w(x,t)=\alpha_{0}+\sum_{n=1}^{\infty}\alpha_{n}\frac{(x+t)^{n}}{2^{n}}%
+\sum_{n=1}^{\infty}\beta_{n}\frac{(x-t)^{n}}{2^{n}}.
\]
Substitution of the relations (\ref{x+t}) gives us the equalities%
\begin{equation*}
\begin{split}
w(x,t) & =\alpha_{0}+\sum_{n=1}^{\infty}\alpha_{n}\frac{p_{2n-1}(x,t)+p_{2n}%
(x,t)}{2^{n}}+\sum_{n=1}^{\infty}\beta_{n}\frac{p_{2n-1}(x,t)-p_{2n}%
(x,t)}{2^{n}}\\
&=\alpha_{0}p_{0}(x,t)+\sum_{n=1}^{\infty}\frac{\alpha_{n}+\beta_{n}}{2^{n}%
}p_{2n-1}(x,t)+\sum_{n=1}^{\infty}\frac{\alpha_{n}-\beta_{n}}{2^{n}}%
p_{2n}(x,t).
\end{split}
\end{equation*}
\end{proof}

\begin{remark}
From this proposition we obtain that the wave polynomials represent a basis in
the linear space of solutions of the wave equation which admit a uniformly
convergent in $\overline{R}$ power series expansion with the center in the
origin. Indeed, consider any such solution of \eqref{wave equation} in
$\overline{R}$. Its values on the lines $x-t=0$ and $x+t=0$ admit uniformly
convergent power series expansion of the form \eqref{powerseries}. According
to the proposition the considered solution can be represented as a uniformly
convergent series of the wave polynomials.
\end{remark}

Let us prove the completeness of the wave polynomials in the linear space of
regular solutions of the wave equation with respect to the maximum norm.

\begin{theorem}
\label{Th Completeness wave polynomials}Let $w\in C^{2}(\overline{R})$ be a
solution of the wave equation \eqref{wave equation} in $R$. Then there exists
a sequence of wave polynomials $P_{N}=\sum_{n=0}^{N}a_{n}p_{n}$ uniformly
convergent to $w$ in $\overline{R}$.
\end{theorem}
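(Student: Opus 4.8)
The plan is to reduce the two-dimensional uniform approximation on $\overline{R}$ to a one-dimensional uniform approximation of the Goursat data on $[-b,b]$, where the Weierstrass theorem applies. First I would observe that a solution $w\in C^{2}(\overline{R})$ of \eqref{wave equation} is completely determined by its values along the two characteristics through the origin: setting $\varphi(x):=w(x,x)$ and $\psi(x):=w(x,-x)$ for $x\in[-b,b]$, both functions lie in $C^{2}[-b,b]$ and satisfy $\varphi(0)=w(0,0)=\psi(0)$. By the uniqueness of the Goursat problem together with the representation \eqref{solution Goursat}, $w$ is exactly $w(x,t)=\varphi\big(\tfrac{x+t}{2}\big)+\psi\big(\tfrac{x-t}{2}\big)-\varphi(0)$ throughout $\overline{R}$, since this formula reproduces the prescribed data on the two diagonals.

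Next I would approximate the data by polynomials. By the Weierstrass approximation theorem there exist polynomials $\varphi_{N},\psi_{N}$ with $\varphi_{N}\to\varphi$ and $\psi_{N}\to\psi$ uniformly on $[-b,b]$. These need not respect the compatibility condition, so I would correct $\psi_{N}$ by the constant shift $\widetilde{\psi}_{N}:=\psi_{N}+\big(\varphi_{N}(0)-\psi_{N}(0)\big)$. Since $\varphi_{N}(0)-\psi_{N}(0)\to\varphi(0)-\psi(0)=0$, the shift tends to zero, $\widetilde{\psi}_{N}\to\psi$ uniformly, and now $\widetilde{\psi}_{N}(0)=\varphi_{N}(0)$, so the pair $(\varphi_{N},\widetilde{\psi}_{N})$ is admissible Goursat data.

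I would then apply Proposition \ref{Prop Solution Goursat real analytic} to the polynomial (hence trivially real-analytic, with terminating power series) data $\varphi_{N},\widetilde{\psi}_{N}$. The resulting Goursat solution $w_{N}$ is a finite linear combination of the $p_{m}$, i.e.\ a wave polynomial $\sum_{m=0}^{2N}a_{m}p_{m}$, with coefficients read off from the Taylor coefficients of $\varphi_{N}$ and $\widetilde{\psi}_{N}$ via the formula of the proposition.

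Finally I would obtain the uniform convergence from the stability of the solution formula. For $(x,t)\in\overline{R}$ both $\tfrac{x+t}{2}$ and $\tfrac{x-t}{2}$ lie in $[-b,b]$, so
\[
\big|w(x,t)-w_{N}(x,t)\big|\le\|\varphi-\varphi_{N}\|_{\infty}+\|\psi-\widetilde{\psi}_{N}\|_{\infty}+\big|\varphi(0)-\varphi_{N}(0)\big|,
\]
and the right-hand side tends to $0$ independently of $(x,t)$. The only genuinely nontrivial input is the first step: recognizing that the characteristic restrictions recover $w$ through \eqref{solution Goursat}, which converts the problem into one-dimensional polynomial approximation. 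Everything afterward is the Weierstrass theorem, the proposition, and an elementary stability estimate, with the compatibility shift as a minor bookkeeping point.
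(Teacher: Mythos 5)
Your proof is correct and follows essentially the same route as the paper's: restrict $w$ to the characteristics to get Goursat data, approximate them by Weierstrass polynomials, adjust a constant to restore the compatibility condition at the origin, invoke Proposition \ref{Prop Solution Goursat real analytic} to identify the resulting Goursat solution as a wave polynomial, and conclude by the stability estimate from \eqref{solution Goursat}. The only cosmetic difference is that the paper shifts both approximating polynomials to match $\varphi(0)$ exactly, whereas you shift only $\psi_N$ to match $\varphi_N(0)$; both work equally well.
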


\begin{proof}
We need to prove that for any $\varepsilon>0$ there exist such a number $N$
and coefficients $a_{n}$, $n=0,1,\ldots N$ that $\left\vert w(x,t)-P_{N}%
(x,t)\right\vert <\varepsilon$ for any point $(x,t)\in\overline{R}$. Let
$w=\varphi(x)$ for $x-t=0$ and $w=\psi(x)$ for $x+t=0$ ($-b\leq x\leq b$). We
choose $\varepsilon>0$ and such $\varepsilon_{1,2}>0$ that $\varepsilon
=2\varepsilon_{1}+2\varepsilon_{2}$. According to the Weierstrass theorem
there exists such number $N$ and such polynomials $p_{1}$ and $p_{2}$ of order
not greater than $N$ that $\left\vert \varphi(x)-p_{1}(x)\right\vert
<\varepsilon_{1}$ and $\left\vert \psi(x)-p_{2}(x)\right\vert <\varepsilon
_{2}$ ($-b\leq x\leq b$). We consider polynomials $q_{1}(x)=p_{1}%
(x)-p_{1}(0)+\varphi(0)$ and $q_{2}(x)=p_{2}(x)-p_{2}(0)+\psi(0)$ satisfying
the condition $q_{1}(0)=q_{2}(0)=\varphi(0)$. Due to Proposition
\ref{Prop Solution Goursat real analytic} the unique solution $\widetilde{w}$
of the Goursat problem with the boundary data $q_{1}$ and $q_{2}$ has the form
$\widetilde{w}=P_{N}(x,t)$ where $P_{N}(x,t)=q_{1}\big(\frac{x+t}%
{2}\big)+q_{2}\big(\frac{x-t}{2}\big)-q_{1}(0)$. Consider
\begin{equation*}
\begin{split}
\left\vert w(x,t)-\widetilde{w}(x,t)\right\vert &=\left\vert w(x,t)-P_{N}
(x,t)\right\vert \\
&\leq\Bigl\vert \varphi\Bigl(\frac{x+t}{2}\Bigr)-q_{1}\Bigl(\frac{x+t}
{2}\Bigr)\Bigr\vert +\Bigl\vert \psi\Bigl(\frac{x-t}{2}\Bigr)-q_{2}%
\Bigl(\frac{x-t}{2}\Bigr)\Bigr\vert\\
&\leq\Bigl\vert \varphi\Bigl(\frac{x+t}{2}\Bigr)-p_{1}\Bigl(\frac{x+t}
{2}\Bigr)\Bigr\vert+\bigl\vert\varphi(0)-p_{1}(0)\bigr\vert\\
&+\Bigl\vert \psi\Bigl(\frac{x-t}{2}\Bigr)-p_{2}\Bigl(\frac{x-t}{2}%
\Bigr)\Bigr\vert+\bigl\vert\psi(0)-p_{2}(0)\bigr\vert \leq2\varepsilon
_{1}+2\varepsilon_{2}=\varepsilon.
\end{split}
\end{equation*}
\end{proof}

\section{Transmutation operators and their action on powers of the independent
variable}

\subsection{Systems of recursive integrals}

Let $f\in C^{2}(a,b)\cap C^{1}[a,b]$ be a complex valued function and
$f(x)\neq0$ for any $x\in\lbrack a,b]$. The interval $(a,b)$ is supposed to be
finite. Let us consider the following auxiliary functions%
\begin{align}\label{X1}
\widetilde{X}^{(0)}(x) &\equiv X^{(0)}(x)\equiv1, \\
\widetilde{X}^{(n)}(x) & =n \int_{x_{0}}^{x} \widetilde{X}^{(n-1)}(s)\left(
f^{2}(s)\right)  ^{(-1)^{n-1}}\,\mathrm{d}s, \label{X2} \\
X^{(n)}(x)& =n \int_{x_{0}}^{x} X^{(n-1)}(s)\left(  f^{2}(s)\right)  ^{(-1)^{n}%
}\,\mathrm{d}s, \label{X3}
\end{align}
where $x_{0}$ is an arbitrary fixed point in $[a,b]$. We introduce the
infinite system of functions $\left\{  \varphi_{k}\right\}  _{k=0}^{\infty}$
defined as follows%
\begin{equation}
\varphi_{k}(x)=%
\begin{cases}
f(x)X^{(k)}(x), & k\text{\ odd,}\\
f(x)\widetilde{X}^{(k)}(x), & k\text{\ even,}%
\end{cases}
\label{phik}%
\end{equation}
where the definition of $X^{(k)}$ and $\widetilde{X}^{(k)}$ is given by
(\ref{X1})--(\ref{X3}) with $x_{0}$ being an arbitrary point of the interval
$[a,b]$.

\begin{example}
\label{ExamplePoly}Let $f\equiv1$, $a=0$, $b=1$. Then it is easy to see that
choosing $x_{0}=0$ we have $\varphi_{k}(x)=x^{k}$, $k\in\mathbb{N}_{0}$ where
by $\mathbb{N}_{0}$ we denote the set of non-negative integers.
\end{example}

In \cite{KrCMA2011} it was shown that the system $\left\{  \varphi
_{k}\right\}  _{k=0}^{\infty}$ is complete in $L_{2}(a,b)$ and in \cite{KMoT}
its completeness in the space of continuous and piecewise continuously
differentiable functions with respect to the maximum norm was obtained and the
corresponding series expansions in terms of the functions $\varphi_{k}$ were
studied. The completeness in the space $C[a,b]$ is shown in the Proposition
\ref{PropApproximabilityByPhi}.

The system (\ref{phik}) is closely related to the notion of the $L$-basis
introduced and studied in \cite{Fage}. Here the letter $L$ corresponds to a
linear ordinary differential operator. This becomes more transparent from the
following result obtained in \cite{KrCV08} (for additional details and simpler
proof see \cite{APFT} and \cite{KrPorter2010}) establishing the relation of
the system of functions $\left\{  \varphi_{k}\right\}  _{k=0}^{\infty}$ to
Sturm-Liouville equations.

\begin{theorem}
[\cite{KrCV08}]\label{ThGenSolSturmLiouville copy(1)} Let $q$ be a continuous
complex valued function of an independent real variable $x\in\lbrack a,b],$
$\lambda$ be an arbitrary complex number. Suppose there exists a solution $f$
of the equation
\begin{equation}
f^{\prime\prime}-qf=0 \label{SLhom}%
\end{equation}
on $(a,b)$ such that $f\in C^{2}[a,b]$ and $f\neq0$ on $[a,b]$. Then the
general solution of the equation
\begin{equation}
u^{\prime\prime}-qu=\lambda u
\end{equation}
on $(a,b)$ has the form%
\[
u=c_{1}u_{1}+c_{2}u_{2}%
\]
where $c_{1}$ and $c_{2}$ are arbitrary complex constants,
\begin{equation}
u_{1}=%
{\displaystyle\sum\limits_{k=0}^{\infty}}
\frac{\lambda^{k}}{(2k)!}\varphi_{2k}\quad\quad\text{and}\quad\quad u_{2}=%
{\displaystyle\sum\limits_{k=0}^{\infty}}
\frac{\lambda^{k}}{(2k+1)!}\varphi_{2k+1} \label{u1u2}%
\end{equation}
and both series converge uniformly on $[a,b]$.

The solutions $u_{1}$ and $u_{2}$ satisfy the initial conditions
\begin{align*}
u_{1}(x_{0})  &  =f(x_{0}), & u_{1}^{\prime}(x_{0})  &  =f^{\prime}(x_{0}),\\
u_{2}(x_{0})  &  =0, & u_{2}^{\prime}(x_{0})  &  =1/f(x_{0}).
\end{align*}

\end{theorem}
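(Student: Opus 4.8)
The plan is to treat the operator $L := \frac{d^{2}}{dx^{2}} - q$ and to exploit the factorization it admits once a nonvanishing solution $f$ of \eqref{SLhom} is available. Since $f'' = qf$, a direct computation shows that $L$ factors as $\bigl(\frac{d}{dx}+\frac{f'}{f}\bigr)\bigl(\frac{d}{dx}-\frac{f'}{f}\bigr)$, which yields the compact identity $L(fg) = \frac{1}{f}\bigl(f^{2}g'\bigr)'$ valid for any twice differentiable $g$. This identity is the structural fact behind the whole construction, and I expect establishing and using it to be the central point; everything else follows by feeding the recursive integrals \eqref{X1}--\eqref{X3} into it.

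First I would establish uniform convergence of the two series in \eqref{u1u2}. Setting $M := \max\bigl(\max_{[a,b]}|f^{2}|,\ \max_{[a,b]}|f^{-2}|\bigr)$, an induction on \eqref{X2}--\eqref{X3} gives $|\widetilde{X}^{(n)}(x)|,\ |X^{(n)}(x)| \le (M|x-x_{0}|)^{n} \le (M(b-a))^{n}$, since the prefactor $n$ in each recurrence is exactly absorbed by the integration. Hence $|\varphi_{2k}| \le \|f\|_{\infty}(M(b-a))^{2k}$ and similarly for $\varphi_{2k+1}$, so the terms of $u_{1}$ and $u_{2}$ are dominated by $\|f\|_{\infty}|\lambda|^{k}(M(b-a))^{2k}/(2k)!$ and the analogous expression, and the Weierstrass $M$-test gives uniform convergence on $[a,b]$.

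Next I would compute the action of $L$ on the functions $\varphi_{k}$ of \eqref{phik}. Applying the factorization with $g=\widetilde{X}^{(2k)}$ and differentiating twice through \eqref{X2} produces $L\varphi_{2k} = 2k(2k-1)\varphi_{2k-2}$, while $g=X^{(2k+1)}$ gives $L\varphi_{2k+1} = (2k+1)(2k)\varphi_{2k-1}$; in particular $L\varphi_{0} = Lf = 0$. To differentiate $u_{1}$ and $u_{2}$ term by term, I would bound $\varphi_{k}'$ directly (the derivative introduces a single factor $k$, again controlled by the factorials) and bound $\varphi_{k}''$ via $\varphi_{k}'' = q\varphi_{k} + (\text{the recurrence above})$, so that both differentiated series converge uniformly. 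This legitimizes $Lu_{1} = \sum_{k}\frac{\lambda^{k}}{(2k)!}L\varphi_{2k}$, and a shift of the summation index turns this into $\lambda u_{1}$; identically $Lu_{2} = \lambda u_{2}$. I expect the rigorous justification of this term-by-term double differentiation to be the main technical obstacle.

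Finally, the initial conditions are read off at $x=x_{0}$: since $X^{(n)}(x_{0}) = \widetilde{X}^{(n)}(x_{0}) = 0$ for $n\ge 1$ while $X^{(0)} = \widetilde{X}^{(0)} = 1$, only the lowest-order terms survive, giving $u_{1}(x_{0}) = f(x_{0})$ and $u_{2}(x_{0}) = 0$; one differentiation through \eqref{X2}--\eqref{X3} then yields $u_{1}'(x_{0}) = f'(x_{0})$ and $u_{2}'(x_{0}) = 1/f(x_{0})$. The Wronskian of $u_{1}$ and $u_{2}$ at $x_{0}$ equals $f(x_{0})\cdot\frac{1}{f(x_{0})} - f'(x_{0})\cdot 0 = 1 \neq 0$, so $u_{1}$ and $u_{2}$ are linearly independent solutions of a second-order linear equation and hence span its solution space; the general solution is therefore $c_{1}u_{1} + c_{2}u_{2}$ with arbitrary complex constants.
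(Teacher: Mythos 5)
The paper does not actually prove this theorem: it is imported from \cite{KrCV08} (with proofs deferred to \cite{APFT}, \cite{KrPorter2010}), so there is no internal proof to compare against. Your argument is correct and coincides with the standard one underlying the paper's own machinery: your identity $L(fg)=\frac{1}{f}\bigl(f^{2}g'\bigr)'$ is the Polya factorization recorded in Remark \ref{Rem Second derivative and Factorization} (where $L=d_{2}^{f}$), and your relations $L\varphi_{2k}=2k(2k-1)\varphi_{2k-2}$, $L\varphi_{2k+1}=(2k+1)2k\,\varphi_{2k-1}$ are exactly the relation $d_{2}^{f}\varphi_{k}=k(k-1)\varphi_{k-2}$ of Remark \ref{Rem Derivatives phi}. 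Your remaining ingredients---the inductive bound $|X^{(n)}|,|\widetilde{X}^{(n)}|\le (M|x-x_{0}|)^{n}$ giving uniform convergence by the Weierstrass test, the justification of term-by-term differentiation via the uniformly convergent differentiated series, the evaluation of the initial values at $x_{0}$, and the Wronskian value $1$ yielding linear independence and hence the general solution---are all sound.
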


Together with the family of functions $\left\{  \varphi_{k}\right\}
_{k=0}^{\infty}$ we consider a dual system of recursive integrals defined by
the following relations involving the \textquotedblleft second
half\textquotedblright\ of the formal powers \eqref{X1}--\eqref{X3},
\begin{equation}
\psi_{k}(x)=%
\begin{cases}
\dfrac{\widetilde{X}^{(k)}(x)}{f(x)}, & k\text{\ odd,}\\
\dfrac{X^{(k)}(x)}{f(x)}, & k\text{\ even.}%
\end{cases}
\label{psik}%
\end{equation}

\subsection{Generalized derivatives and generalized Taylor
series\label{Subsect Gen Der and Gen Taylor}}

In \cite{KMoT} a notion of the generalized derivative was introduced which
alows one to extend the theory of power series onto the series in terms of the
functions $\varphi_{k}$ (the formal power series). Here we slightly modify the
definition introduced in \cite{KMoT}. This modification simplifies formulas
involving the generalized derivatives and reflects a better understanding of
the nature of the functions $\varphi_{k}$ and $\psi_{k}$ in the light of
application of transmutation operators. We assume that the complex-valued
function $f$ is continuous on $[a,b]$, $f(x)\neq0$ for any $x\in\lbrack a,b]$
and $f(x_{0})=1$.

\begin{definition}
\label{Def f-derivatives}The generalized derivatives or the $f$-derivatives of
a function $g$ are defined by the following relations whenever they make
sense. The generalized derivative of a zero order coincides with the function
$g$, $d_{0}^{f}[g](x)=g(x)$. The generalized derivatives of higher orders are
defined as follows $d_{k}^{f}[g]=f^{(-1)^{k-1}}\frac{d}{dx}\left(
f^{(-1)^{k}}d_{k-1}^{f}[g]\right)  $, $k=1,2,\ldots$.

That is,
\[
d_{k}^{f}[g]=%
\begin{cases}
f\frac{d}{dx}\left(  \frac{1}{f}d_{k-1}^{f}[g]\right)  , & k\ \text{ odd},\\
\frac{1}{f}\frac{d}{dx}\left(  fd_{k-1}^{f}[g]\right)  , & k\ \text{ even}.
\end{cases}
\]

\end{definition}

\begin{remark}
\label{Rem Second derivative and Factorization}Let $f$ be a solution of
\eqref{SLhom} satisfying the conditions of Theorem
\ref{ThGenSolSturmLiouville copy(1)}. Then the corresponding differential
operator can be factorized in the following way $L=\frac{d^{2}}{dx^{2}}-q(x)$
$=\frac{1}{f}\frac{d}{dx}\left(  f^{2}\frac{d}{dx}\frac{1}{f}\ \cdot\right)
$. This factorization sometimes is called the Polya factorization (see
\cite{KelleyPeterson}).We see from it that $L=d_{2}^{f}$.

The generalized derivative $d_{1}^{f}=$ $f\frac{d}{dx}\left(  \frac{1}%
{f}\ \cdot\right)  $ coincides with the Darboux transformation (see, e.g.,
\cite{Matveev}).
\end{remark}

\begin{remark}
\label{Rem Derivatives phi}It is easy to see that
\[
d_{1}^{f}\varphi_{k}=k\psi_{k-1},\quad k=1,2,\ldots,
\]
\[
d_{2}^{f}\varphi_{k}=k(k-1)\varphi_{k-2},\quad k=2,3,\ldots.
\]
and
\[
d_{1}^{f}\varphi_{0}=d_{2}^{f}\varphi_{0}=d_{2}^{f}\varphi_{1}=0.
\]

\end{remark}

\begin{remark}
Consideration of the $1/f$-derivatives defined according to Definition
\ref{Def f-derivatives} leads to the dual relations
\[
d_{1}^{1/f}\psi_{k}=k\varphi_{k-1},\quad k=1,2,\ldots
\]
and
\[
d_{2}^{1/f}=f\frac{d}{dx}\left(  \frac{1}{f^{2}}\frac{d}{dx}f\ \cdot\right)
=\frac{d^{2}}{dx^{2}}-q_{D}(x)
\]
where the potential $q_{D}$ is a superpartner of $q$ defined by the equality
$q_{D}=-q+2\left(  \frac{f^{\prime}}{f}\right)  ^{2}$ (see Subsection
\ref{Subsect Transmutations}).
\end{remark}

\begin{definition}
\label{Defn Pn} Functions of the form
\begin{equation}
P_{n}(x)=\sum\limits_{k=0}^{n}\alpha_{k}\varphi_{k}(x) \label{Pn}%
\end{equation}
where $\alpha_{k}$, $k=0,1,\ldots,n$ are complex numbers are called
$f$-polynomials of the order $n$.
\end{definition}

In a complete similarity to the fact that the coefficients of a polynomial
$\sum_{k=0}^{n}a_{k}(x-x_{0})^{k}$ can be expressed through its value and the
values of its derivatives at the point $x_{0}$, the coefficients of an
$f$-polynomial are determined by the values of $P_{n}$ and of its
$f$-derivatives at $x_{0}$ (at the initial point of integration in (\ref{X2}),
(\ref{X3})). Indeed, a simple calculation using Remark
\ref{Rem Derivatives phi} gives us the following result
\[
\alpha_{k}=\frac{d_{k}^{f}[P_{n}](x_{0})}{k!}.
\]
Let us consider a function $g$ possessing at the point $x_{0}$ the
$f$-derivatives of all orders up to the order $n$. More precisely this means
that the function $g$ is defined and possesses the $f$-derivatives of all
orders up to the order $n-1$ in some segment $[a,b]$ containing the point
$x_{0}$ and additionally there exists the $n$-th $f$-derivative of $g$ at the
point $x_{0}$. In relation with the function $g$, we introduce an
$f$-polynomial of the form (\ref{Pn}) with the coefficients
\[
\alpha_{k}=\frac{d_{k}^{f}[g](x_{0})}{k!}.
\]
According to the previous observation, this $f$-polynomial together with its
$f$-derivatives at $x_{0}$ up to the order $n$ take the same values as the
function $g$ and its respective $f$-derivatives, $d_{k}^{f}[P_{n}%
](x_{0})=d_{k}^{f}[g](x_{0})$, $k=0,1,\ldots,n$. We are interested in
estimating the difference between $P_{n}(x)$ and $g(x)$ for $x\neq x_{0}$.

\begin{theorem}
[Generalized Taylor theorem with the Peano form of the remainder term]Let the
function $g$ possesses at the point $x_{0}$ the $f$-derivatives of all orders
up to the order $n$ and $f$ be a continuously differentiable function in a
neighborhood of $x_{0}$. Then
\[
g(x)=\sum_{k=0}^{n}\frac{d_{k}^{f}[g](x_{0})}{k!}\varphi_{k}(x)+o\big((x-x_{0}%
)^{n}\big).
\]

\end{theorem}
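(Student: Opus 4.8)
The plan is to isolate the remainder and reduce everything to a single integral estimate. Set $P_n(x)=\sum_{k=0}^{n}\frac{d_k^f[g](x_0)}{k!}\varphi_k(x)$ and $R_n:=g-P_n$. First I would invoke the coefficient-extraction identity $\alpha_k=d_k^f[P_n](x_0)/k!$ recorded just before the statement; it rests on the values $d_m^f[\varphi_k](x_0)=k!\,\delta_{mk}$, which follow by iterating Remark~\ref{Rem Derivatives phi} and using that $\varphi_k(x_0)=\psi_k(x_0)=0$ for $k\ge1$ while $f(x_0)=1$. Consequently $d_k^f[R_n](x_0)=d_k^f[g](x_0)-d_k^f[P_n](x_0)=0$ for $k=0,1,\dots,n$, so the theorem reduces to the following claim: if $h$ possesses $f$-derivatives up to order $n-1$ in a neighborhood of $x_0$ and an $n$-th $f$-derivative at $x_0$, all vanishing at $x_0$, then $h(x)=o\big((x-x_0)^n\big)$.

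I would prove this claim by induction on $n$, and the key structural observation, where the alternating nature of Definition~\ref{Def f-derivatives} must be handled, is that differentiating once turns an $f$-problem of order $n$ into a $1/f$-problem of order $n-1$. Concretely, put $v:=d_1^f[h]=f\,(h/f)'$. A direct computation from Definition~\ref{Def f-derivatives} shows $d_m^{1/f}[v]=d_{m+1}^f[h]$ for every admissible $m$ (including $m=0$), because the outer operation defining $d_k^f$ is $d_1^f$ for $k$ odd and $d_1^{1/f}$ for $k$ even, so composing these with the initial $d_1^f$ reproduces exactly the operations defining the $1/f$-derivatives. Hence $v$ has $1/f$-derivatives up to order $n-2$ in a neighborhood of $x_0$ and an $(n-1)$-st at $x_0$, all vanishing there. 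Since $1/f$ is again continuously differentiable, nonvanishing, and equal to $1$ at $x_0$, the induction hypothesis applied with $1/f$ in place of $f$ yields $v(x)=o\big((x-x_0)^{n-1}\big)$.

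To recover $h$ I would integrate the defining relation: from $v=f\,(h/f)'$ and $h(x_0)=0$ one gets $h(x)=f(x)\int_{x_0}^{x}\frac{v(s)}{f(s)}\,ds$. Using that $f$ and $1/f$ are bounded near $x_0$ and the estimate $|v(s)|\le\varepsilon\,|s-x_0|^{n-1}$ for $s$ close to $x_0$, the elementary bound $\bigl|\int_{x_0}^{x}|s-x_0|^{n-1}\,ds\bigr|=|x-x_0|^{n}/n$ gives $|h(x)|\le C\varepsilon\,|x-x_0|^{n}$ with $C$ independent of $\varepsilon$; letting $\varepsilon\to0$ proves $h(x)=o\big((x-x_0)^n\big)$. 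The base case $n=1$ I would treat directly: $d_1^f[h](x_0)=0$ forces $(h/f)'(x_0)=0$, and together with $(h/f)(x_0)=0$ this says $h/f$ is differentiable at $x_0$ with vanishing value and derivative, whence $h/f=o(x-x_0)$ and $h=o(x-x_0)$ by boundedness of $f$.

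The main obstacle is the bookkeeping of regularity and of the $f\leftrightarrow 1/f$ alternation rather than any hard analysis: one must check that $d_m^{1/f}[v]=d_{m+1}^f[h]$ holds at the level of existence (not merely formally) under the stated one-sided regularity, so that the induction hypothesis genuinely applies to $v$ on a neighborhood, and one must anchor the induction at $n=1$ rather than $n=0$, since at order $n=1$ the function $v=d_1^f[h]$ is known to exist only at $x_0$. Once the claim is established, applying it to $h=R_n$ completes the proof.
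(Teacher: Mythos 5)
Your proposal is correct, and it shares the paper's skeleton: reduce to the remainder $r=g-P_{n}$, whose $f$-derivatives vanish at $x_{0}$ (via $d_{m}^{f}[\varphi_{k}](x_{0})=k!\,\delta_{mk}$, exactly as in the paper's discussion preceding the theorem), then run an induction whose hypothesis is symmetric in $f$ and $1/f$, exploiting the fact that $d_{1}^{f}$ converts the $f$-problem of order $n$ into the $1/f$-problem of order $n-1$. Where you genuinely differ is the inductive step. The paper applies the induction hypothesis \emph{twice} --- to $r$ itself (order $n$, weight $f$) and to $d_{1}^{f}[r]$ (order $n$, weight $1/f$) --- and combines the two estimates through the mean value theorem applied to $\operatorname{Re}r$ and $\operatorname{Im}r$, using the identity $r'=d_{1}^{f}[r]+\frac{f'}{f}\,r$; this is precisely where the hypothesis $f\in C^{1}$ enters. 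You apply the hypothesis only \emph{once}, to $v=d_{1}^{f}[h]$, and recover $h$ by integration, gaining the extra factor of $(x-x_{0})$ from the integral rather than from the mean value theorem. This buys a leaner induction and, in principle, weaker regularity of $f$ (your argument never differentiates $f$, only $h/f$ and its iterates). The one step you must justify, and currently only assert, is the identity $h(x)=f(x)\int_{x_{0}}^{x}v(s)/f(s)\,ds$: the function $h/f$ is everywhere differentiable near $x_{0}$ with derivative $v/f$, but an everywhere-differentiable function need not equal the integral of its derivative (the derivative may fail to be integrable). The gap closes in one sentence: by the induction hypothesis $v=o\bigl((x-x_{0})^{n-1}\bigr)$, so $v/f$ is bounded on a neighborhood of $x_{0}$; a function with bounded derivative everywhere is Lipschitz there (mean value inequality on real and imaginary parts), hence absolutely continuous, and the fundamental theorem of calculus applies on that neighborhood --- which is all your estimate uses. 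Alternatively, once the mean value inequality is invoked you can dispense with the integral entirely and write $|h(x)|\le|f(x)|\sup_{s}|v(s)/f(s)|\,|x-x_{0}|\le C\varepsilon|x-x_{0}|^{n}$, which brings your argument essentially back to the paper's.
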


\begin{proof}
Consider the difference $r(x)=g(x)-P_{n}(x)$. We have
\begin{equation}
r(x_{0})=d_{1}^{f}[r](x_{0})=\cdots=d_{n}^{f}[r](x_{0})=0.
\label{conditions for r}%
\end{equation}
Let us prove by induction that if a function $r$ satisfies the conditions
(\ref{conditions for r}) or the conditions
\begin{equation}
r(x_{0})=d_{1}^{1/f}[r](x_{0})=\cdots=d_{n}^{1/f}[r](x_{0})=0,
\label{conditions for d1}%
\end{equation}
then necessarily $r(x)=o\big((x-x_{0})^{n}\big)$.

For $n=1$ this assertion has the form: if the function $r(x)$ possessing at
$x_{0}$ the first $f$-derivative fulfills the conditions $r(x_{0})=d_{1}%
^{f}[r](x_{0})=0$ or possessing at $x_{0}$ the first $1/f$-derivative fulfills
the conditions $r(x_{0})=d_{1}^{1/f}[r](x_{0})=0$ then $r(x)=o(x-x_{0})$. Its
validity can be verified directly. In the first case we have
\[
\lim_{x\rightarrow x_{0}}\frac{r(x)}{x-x_{0}}=f(x_{0})\lim_{x\rightarrow
x_{0}}\frac{r(x)/f(x)}{x-x_{0}}=f(x_{0})\lim_{x\rightarrow x_{0}}\frac
{\frac{r(x)}{f(x)}-\frac{r(x_{0})}{f(x_{0})}}{x-x_{0}}=0
\]
due to the condition $d_{1}^{f}[r](x_{0})=0$, and in the second case the proof
is completely similar.

Assume that the assertion is true for some $n\geq1$. Due to the symmetry of
(\ref{conditions for r}) and (\ref{conditions for d1}) it is enough to prove
that if for a function $r(x)$ possessing at $x_{0}$ the $f$-derivatives up to
the order $n+1$ the following conditions are fulfilled $r(x_{0})=d_{1}%
^{f}[r](x_{0})=\cdots=d_{n+1}^{f}[r](x_{0})=0$ then $r(x)=o\big((x-x_{0}%
)^{n+1}\big)$. For this we observe that $r(x)$ fulfills the conditions
(\ref{conditions for r}) meanwhile $d_{1}^{f}[r]$ fulfills the conditions
(\ref{conditions for d1}) and hence by the assumption we have
$r(x)=o\big((x-x_{0})^{n}\big)$ and $d_{1}^{f}[r](x)=o\big((x-x_{0}%
)^{n}\big).$ Notice that by the mean value theorem
\begin{align*}
r(x) =  &  r(x)-r(x_{0})=\left(  \operatorname{Re}r^{\prime}(c_{1}%
)+i\operatorname{Im}r^{\prime}(c_{2})\right)  (x-x_{0})\\
=  &  \left(  \operatorname{Re}\Bigl( d_{1}^{f}[r](c_{1})+\frac{f^{\prime
}(c_{1})}{f(c_{1})}r(c_{1})\Bigr) +i\operatorname{Im}\Bigl( d_{1}^{f}%
[r](c_{2})+\frac{f^{\prime}(c_{2})}{f(c_{2})}r(c_{2})\Bigr) \right)
(x-x_{0}),
\end{align*}
where $c_{1}$ and $c_{2}$ are located between $x_{0}$ and $x$. As $\left\vert
c_{1,2}-x_{0}\right\vert <\left\vert x-x_{0}\right\vert $, then $d_{1}%
^{f}[r](c_{1,2})=o\bigl((c_{1,2}-x_{0})^{n}\bigr)=o\bigl((x-x_{0})^{n}\bigr)$
and $r(c_{1,2})=o\bigl((c_{1,2}-x_{0})^{n}\bigr)=o\bigl((x-x_{0})^{n}\bigr)$.
Thus, we obtain $r(x)=o\bigl((x-x_{0})^{n+1}\bigr)$.
\end{proof}

Under an additional condition that $f$ is real valued we obtain the following
result by applying the reasoning from \cite{KMoT}.

\begin{theorem}
[Generalized Taylor theorem with the Lagrange form of the remainder]%
\label{ThGenTaylorTheorem} Let the real-valued function $g$ possesses on the
segment $[x_{0},b]$ continuous $f$-derivatives of all orders up to the order
$n$ and there exists a bounded $(n+1)$-th $f$-derivative of $g$ on $\left(
x_{0},b\right)  $. Let $f$ be a real-valued, continuously differentiable
function in $[x_{0},b]$. Then for any $x\in\lbrack x_{0},b]$ there exists a
number $c$ between $x_{0}$ and $x$ such that
\[
g(x)=\sum_{k=0}^{n}\frac{d_{k}^{f}(g)(x_{0})}{k!}\varphi_{k}(x)+\frac
{d_{n+1}^{f}(g)(c)}{(n+1)!}\varphi_{n+1}(x).
\]

\end{theorem}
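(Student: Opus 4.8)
The plan is to imitate the classical derivation of the Lagrange remainder by repeated application of Rolle's theorem, with the ordinary derivative replaced by the generalized derivative $d_k^f$; the restriction to real-valued $g$ and $f$ is exactly what makes Rolle's theorem available (the Peano version above had instead to split into real and imaginary parts). First I would pass to the remainder $r=g-P_n$, where $P_n=\sum_{k=0}^n\frac{d_k^f[g](x_0)}{k!}\varphi_k$ is the generalized Taylor polynomial. By the coefficient computation preceding the statement, $d_m^f[r](x_0)=0$ for $m=0,1,\dots,n$; moreover, since by Remark \ref{Rem Derivatives phi} each application of $d_1^f$ and $d_2^f$ lowers the index of a $\varphi_k$, one has $d_{n+1}^f\varphi_k=0$ for every $k\le n$, so that $d_{n+1}^f[r]=d_{n+1}^f[g]$.

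Fix $x\in(x_0,b]$, the case $x=x_0$ being trivial. Here I use that $\varphi_{n+1}(x)\neq0$ for $x>x_0$: for real, non-vanishing $f$ the recursive integrals $X^{(k)},\widetilde X^{(k)}$ are of one sign on $(x_0,b]$ (an easy induction, the same positivity that underlies the Tchebyshev property announced in the Introduction). Thus I may define the constant $M$ by $r(x)=M\,\varphi_{n+1}(x)$ and set $G=r-M\,\varphi_{n+1}$. Then $G(x)=0$, while $d_m^f[G](x_0)=d_m^f[r](x_0)-M\,d_m^f[\varphi_{n+1}](x_0)=0$ for $m=0,1,\dots,n$, since both $r$ and $\varphi_{n+1}$ vanish together with their first $n$ generalized derivatives at $x_0$.

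The engine is a generalized Rolle theorem: if $d_m^f[\rho](a)=d_m^f[\rho](b)=0$, then applying ordinary Rolle to the real function $f^{(-1)^{m+1}}d_m^f[\rho]$, which vanishes at $a$ and $b$ because $f$ does not vanish, produces $c'\in(a,b)$ with $d_{m+1}^f[\rho](c')=0$. Starting from the two zeros $G(x_0)=G(x)=0$ and feeding in at each stage the additional zero $d_m^f[G](x_0)=0$, I chain this $n+1$ times to obtain a point $c\in(x_0,x)$ with $d_{n+1}^f[G](c)=0$. This is the direct analogue of the mean-value reasoning used in the Peano proof, where the identity $\rho'=d_1^f[\rho]+\frac{f'}{f}\rho$ played the same role.

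Finally, $d_{n+1}^f[G](c)=0$ together with $d_{n+1}^f[r]=d_{n+1}^f[g]$ gives $M=\dfrac{d_{n+1}^f[g](c)}{d_{n+1}^f[\varphi_{n+1}](c)}$, whence $r(x)=\dfrac{d_{n+1}^f[g](c)}{d_{n+1}^f[\varphi_{n+1}](c)}\,\varphi_{n+1}(x)$. The step I expect to require the most care, and the one I would verify first, is the evaluation of $d_{n+1}^f[\varphi_{n+1}]$ at the intermediate point $c$. Iterating Remark \ref{Rem Derivatives phi} yields $d_{n+1}^f\varphi_{n+1}=(n+1)!\,\varphi_0$ when $n+1$ is even and $(n+1)!\,\psi_0$ when $n+1$ is odd; these equal $(n+1)!$ at the base point precisely because $f(x_0)=1$, but away from $x_0$ they retain a factor $f(c)^{\pm1}$. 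Matching this against the clean constant $(n+1)!$ in the statement is the delicate point of the argument, and is where I would concentrate the checking.
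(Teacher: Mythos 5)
Your overall strategy---pass to the remainder $r=g-P_n$, use that $\varphi_{n+1}$ is of one sign to the right of $x_0$ for real non-vanishing $f$, define $M$ by $r(x)=M\varphi_{n+1}(x)$, and chain a generalized Rolle theorem $n+1$ times---is sound, and each of your intermediate steps checks against the paper's definitions (note the paper itself offers no argument here beyond asserting that the proof in \cite{KMoT} adapts ``without essential changes''). However, the issue you flag in your final paragraph is not a detail left to verify: it is a genuine obstruction, and your argument, carried to its end, proves a \emph{different} formula than the one stated. Since $d_{n+1}^f\varphi_{n+1}=(n+1)!\,\varphi_0=(n+1)!\,f$ for $n+1$ even and $(n+1)!\,\psi_0=(n+1)!/f$ for $n+1$ odd, your chain of Rolle arguments yields
\[
g(x)=\sum_{k=0}^{n}\frac{d_{k}^{f}(g)(x_{0})}{k!}\,\varphi_{k}(x)+\frac{\bigl[f(c)\bigr]^{(-1)^{n}}\,d_{n+1}^{f}(g)(c)}{(n+1)!}\,\varphi_{n+1}(x),
\]
and the factor $\bigl[f(c)\bigr]^{(-1)^{n}}$ cannot be discarded.

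Indeed, the theorem as printed (with this paper's modified definition of $d_k^f$) is false, so no proof can close the gap you left open. Take $x_0=0$, $f(x)=e^{x}$ (real, $C^1$, non-vanishing, $f(0)=1$), $g\equiv-1$, $n=0$. Then $\varphi_0(x)=e^{x}$, $\varphi_1(x)=\sinh x$, and $d_1^f g=g'-\frac{f'}{f}g\equiv1$, so the printed identity would read $-1=-e^{x}+\sinh x$, which fails for every $x>0$; the right-hand side does not even depend on $c$, so no choice of $c$ can help. Your corrected formula reads $-1=-e^{x}+e^{c}\sinh x$, solvable with $e^{c}=(e^{x}-1)/\sinh x\in(1,e^{x})$, i.e., with $c\in(0,x)$ as required. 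The source of the error is the paper's ``slight modification'' of the generalized derivative of \cite{KMoT}: the clean constant $(n+1)!$ is recovered only for the rescaled derivatives $f^{(-1)^{k+1}}d_k^f$, which satisfy $\bigl(f^{(-1)^{k+1}}d_{k}^{f}\bigr)\varphi_{k}\equiv k!$ and still obey a Rolle property, and which coincide with $d_k^f$ \emph{at} $x_0$ (because $f(x_0)=1$, which is why the polynomial part of the statement is unaffected) but not at the intermediate point $c$. So you were right to concentrate your checking exactly there; the conclusion of that check is that your version of the remainder, not the printed one, is the correct theorem, and the paper's one-line ``simple adaptation'' proof is precisely where this factor was lost.
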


\begin{proof}
The proof is a simple adaptation of the proof from \cite{KMoT} according to
the modified definition of generalized derivatives. All the steps and
reasonings do not essentially change.
\end{proof}

Obviously, the classical Taylor theorem with the Lagrange form of the
remainder term is a special case of theorem \ref{ThGenTaylorTheorem} when
$f\equiv1$.

\subsection{Transmutation operators\label{Subsect Transmutations}}

We give a definition of a transmutation operator from \cite{KT Obzor} which is
a modification of the definition given by Levitan \cite{LevitanInverse}
adapted to the purposes of the present work. Let $E$ be a linear topological
space and $E_{1}$ its linear subspace (not necessarily closed). Let $A$ and
$B$ be linear operators: $E_{1}\rightarrow E$.

\begin{definition}
\label{DefTransmut} A linear invertible operator $T$ defined on the whole $E$
such that $E_{1}$ is invariant under the action of $T$ is called a
transmutation operator for the pair of operators $A$ and $B$ if it fulfills
the following two conditions.

\begin{enumerate}
\item Both the operator $T$ and its inverse $T^{-1}$ are continuous in $E$;

\item The following operator equality is valid
\begin{equation}
AT=TB \label{ATTB}%
\end{equation}
or which is the same
\[
A=TBT^{-1}.
\]

\end{enumerate}
\end{definition}

Very often in literature the transmutation operators are called the
transformation operators. Here we keep the original term introduced by
Delsarte and Lions \cite{DelsarteLions1956}. Our main interest concerns the
situation when $A=-\frac{d^{2}}{dx^{2}}+q(x)$, $B=-\frac{d^{2}}{dx^{2}}$,
\ and $q$ is a continuous complex-valued function. Hence for our purposes it
will be sufficient to consider the functional space $E=C[a,b]$ with the
topology of uniform convergence and its subspace $E_{1}$ consisting of
functions from $C^{2}\left[  a,b\right]  $. One of the possibilities to
introduce a transmutation operator on $E$ was considered by Lions
\cite{Lions57} and later on in other references (see, e.g., \cite{Marchenko}),
and consists in constructing a Volterra integral operator corresponding to a
midpoint of the segment of interest. As we begin with this transmutation
operator it is convenient to consider a symmetric segment $[-b,b]$ and hence
the functional space $E=C[-b,b]$. It is worth mentioning that other well known
ways to construct the transmutation operators (see, e.g.,
\cite{LevitanInverse}, \cite{Trimeche}) imply imposing initial conditions on
the functions and consequently lead to transmutation operators satisfying
(\ref{ATTB}) only on subclasses of $E_{1}$.

Thus, we consider the space $E=C[-b,b]$ and an operator of transmutation for
the defined above $A$ and $B$ can be realized in the form (see, e.g.,
\cite{LevitanInverse} and \cite{Marchenko}) of a Volterra integral operator%

\begin{equation}
Tu(x)=u(x)+\int_{-x}^{x}K(x,t)u(t)dt \label{T}%
\end{equation}
where $K(x,t)$ is a unique solution of the Goursat problem%

\begin{equation}
\left(  \frac{\partial^{2}}{\partial x^{2}}-q(x)\right)  K(x,t)=\frac
{\partial^{2}}{\partial t^{2}}K(x,t), \label{Goursat1}%
\end{equation}%
\begin{equation}
K(x,x)=\frac{1}{2}\int_{0}^{x}q(s)ds,\qquad K(x,-x)=0. \label{Goursat2}%
\end{equation}

In \cite{CKT} the following mapping properties of the operator $T$ were proved.

\begin{theorem}
[\cite{CKT}]\label{Th Transmutation of Powers K}Let $q$ be a continuous
complex valued function of an independent real variable $x\in\lbrack-b,b]$ for
which there exists a particular solution $f$ of \eqref{SLhom} such that $f\in
C^{2}[-b,b]$, $f\neq0$ on $[-b,b]$ and normalized as $f(0)=1$. Denote
$h:=f^{\prime}(0)\in\mathbb{C}$. Suppose $T$ is the operator defined by
\eqref{T} where the kernel $K$ is a solution of the problem \eqref{Goursat1},
\eqref{Goursat2} and $\varphi_{k}$, $k\in\mathbb{N}_{0}$ are functions defined
by \eqref{phik}. Then the following equalities hold%
\begin{equation}
\varphi_{k}=T[x^{k}],\quad\quad k\text{ odd} \label{Txk_odd}%
\end{equation}
and
\begin{equation}
\varphi_{k}-\frac{h}{k+1}\varphi_{k+1}=T[x^{k}],\quad\quad k\text{ even.}
\label{Txk_even1}%
\end{equation}

\end{theorem}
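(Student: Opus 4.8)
The plan is to exploit the intertwining relation \eqref{ATTB} together with the SPPS representation of Theorem~\ref{ThGenSolSturmLiouville copy(1)} (taken with base point $x_{0}=0$, so that $f(0)=1$, $f'(0)=h$), and then to compare power series in an auxiliary spectral parameter. First I would record the behaviour of $T$ at the origin. Since the kernel satisfies $K(x,-x)=0$ and $K(x,x)=\tfrac12\int_{0}^{x}q$ by \eqref{Goursat2}, in particular $K(0,0)=0$, a direct differentiation of \eqref{T} under the integral sign gives, for every $u\in C^{2}[-b,b]$, the identities $(Tu)(0)=u(0)$ and $(Tu)'(0)=u'(0)$. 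Thus $T$ preserves the Cauchy data at the point $0$.

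Next, for a complex parameter $\rho$ I would apply $T$ to the two functions $\cos(\rho x)$ and $\sin(\rho x)/\rho$, both of which lie in $E_{1}=C^{2}[-b,b]$ and solve $Bu=\rho^{2}u$, i.e. $-u''=\rho^{2}u$. By the operator equality $AT=TB$ their images $v_{c}:=T[\cos(\rho\,\cdot)]$ and $v_{s}:=T[\sin(\rho\,\cdot)/\rho]$ satisfy $Av=\rho^{2}v$, that is $v''-qv=-\rho^{2}v$, which is the Sturm--Liouville equation of Theorem~\ref{ThGenSolSturmLiouville copy(1)} with $\lambda=-\rho^{2}$. By the boundary computation above, $v_{c}$ carries the Cauchy data $(1,0)$ and $v_{s}$ the data $(0,1)$ at $x=0$. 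Since the solutions in \eqref{u1u2} have data $u_{1}(0)=f(0)=1$, $u_{1}'(0)=f'(0)=h$ and $u_{2}(0)=0$, $u_{2}'(0)=1/f(0)=1$, uniqueness for the Cauchy problem of a second-order linear ODE with continuous $q$ yields, at $\lambda=-\rho^{2}$, the identifications $v_{s}=u_{2}$ and $v_{c}=u_{1}-h\,u_{2}$.

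It then remains to expand both sides of these two operator identities in powers of $\rho$ and match coefficients. On the left I would use the uniformly (on $[-b,b]$) convergent expansions $\cos(\rho x)=\sum_{k}(-1)^{k}\rho^{2k}x^{2k}/(2k)!$ and $\sin(\rho x)/\rho=\sum_{k}(-1)^{k}\rho^{2k}x^{2k+1}/(2k+1)!$, and pull $T$ inside the sums, which is legitimate because $T$ is continuous on $C[-b,b]$ and the partial sums converge uniformly on the compact interval. On the right I would substitute $\lambda=-\rho^{2}$ in \eqref{u1u2}, giving $v_{s}=\sum_{k}\tfrac{(-1)^{k}\rho^{2k}}{(2k+1)!}\varphi_{2k+1}$ and $v_{c}=\sum_{k}\tfrac{(-1)^{k}\rho^{2k}}{(2k)!}\varphi_{2k}-h\sum_{k}\tfrac{(-1)^{k}\rho^{2k}}{(2k+1)!}\varphi_{2k+1}$. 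As both sides are now power series in $\rho^{2}$ with coefficients in $C[-b,b]$ holding for all $\rho$, their coefficients agree termwise. The coefficient of $\rho^{2k}$ in the $v_{s}$-identity gives $T[x^{2k+1}]=\varphi_{2k+1}$, which is \eqref{Txk_odd}; the coefficient of $\rho^{2k}$ in the $v_{c}$-identity gives $T[x^{2k}]=\varphi_{2k}-\tfrac{h}{2k+1}\varphi_{2k+1}$, which is \eqref{Txk_even1}.

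The main obstacle is the rigorous justification of the two interchanges of $T$ with the infinite sums and of the coefficient comparison: one must verify that $T$ acts term by term (via its continuity on $C[-b,b]$ and the uniform convergence of the Taylor series on the compact interval) and that the relations may be read as genuine identities of power series in $\rho^{2}$ whose coefficient functions are uniquely determined. The remaining ingredients---the boundary evaluation of $Tu$ and $(Tu)'$ at $0$, and the uniqueness of the Cauchy problem used to identify $v_{c}$ and $v_{s}$---are routine.
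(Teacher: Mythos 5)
Your proof is correct, and it is complete modulo facts the paper itself supplies: continuity of the kernel $K$ with $K(0,0)=0$ (which is all that is needed to get $(Tu)(0)=u(0)$ and $(Tu)'(0)=u'(0)$, even without differentiating under the integral sign), the intertwining relation $AT=TB$ on $C^{2}[-b,b]$, uniqueness for the ODE Cauchy problem, and the SPPS representation of Theorem \ref{ThGenSolSturmLiouville copy(1)}. Note that this paper does not prove Theorem \ref{Th Transmutation of Powers K} at all but imports it from \cite{CKT}; your route---transmuting $\cos(\rho x)$ and $\sin(\rho x)/\rho$, identifying the images as $u_{1}-hu_{2}$ and $u_{2}$ via the preserved Cauchy data at the origin, and matching coefficients of the everywhere-convergent power series in $\rho^{2}$---is essentially the spectral-parameter argument on which \cite{CKT} is based, so nothing further is required.
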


\begin{remark}
Let $f$ be the solution of \eqref{SLhom} satisfying the initial conditions
\begin{equation}
f(0)=1\quad\text{and}\quad f^{\prime}(0)=0. \label{initcond 1 0}%
\end{equation}
If it does not vanish on $[-b,b]$ then from Theorem
\ref{Th Transmutation of Powers K} we obtain that $\varphi_{k}=T[x^{k}]$ for
any $k\in\mathbb{N}_{0}$. In general, of course there is no guaranty that the
solution with such initial values would have no zero on $[-b,b]$, and hence
the operator $T$ transmutes the powers of $x$ into $\varphi_{k}$ whose
construction is based on the solution $f$ satisfying \eqref{initcond 1 0} only
in some neighborhood of the origin.
\end{remark}

In \cite{CKT} it was shown that given a system of functions $\left\{
\varphi_{k}\right\}  _{k=0}^{\infty}$ defined by (\ref{phik}) where $f$ is any
particular solution of (\ref{SLhom}) such that $f\in C^{2}[-b,b]$, $f\neq0$ on
$[-b,b]$ and $f(0)=1$, $f^{\prime}(0)=h\in\mathbb{C}$, the operator $T$ can be
modified in such a way that the new transmutation operator will map $x^{k}$ to
$\varphi_{k}$ for any $k\in\mathbb{N}_{0}$.

\begin{theorem}
[\cite{CKT}, \cite{KT}]\label{Th Transmute}Under the conditions of Theorem
\ref{Th Transmutation of Powers K} the operator
\begin{equation}
\mathbf{T}u(x)=u(x)+\int_{-x}^{x}\mathbf{K}(x,t;h)u(t)dt \label{Tmain}%
\end{equation}
with the kernel defined by
\begin{equation}
\mathbf{K}(x,t;h)=\frac{h}{2}+K(x,t)+\frac{h}{2}\int_{t}^{x}\left(
K(x,s)-K(x,-s)\right)  ds \label{Kmain}%
\end{equation}
transforms $x^{k}$ into $\varphi_{k}(x)$ for any $k\in\mathbb{N}_{0}$ and
\begin{equation}
\left(  -\frac{d^{2}}{dx^{2}}+q(x)\right)  \mathbf{T}[u]=\mathbf{T}\left[
-\frac{d^{2}}{dx^{2}}(u)\right]  \label{TransmutC2}%
\end{equation}
for any $u\in C^{2}[-b,b]$.

Moreover, if the potential $q\in C^{1}[-b,b]$, then the kernel $\mathbf{K}%
(x,t;h)$ is a unique solution of the Goursat problem
\begin{equation}
\left(  \frac{\partial^{2}}{\partial x^{2}}-q(x)\right)  \mathbf{K}%
(x,t;h)=\frac{\partial^{2}}{\partial t^{2}}\mathbf{K}(x,t;h),
\label{GoursatTk1}%
\end{equation}%
\begin{equation}
\mathbf{K}(x,x;h)=\frac{h}{2}+\frac{1}{2}\int_{0}^{x}q(s)\,ds,\qquad
\mathbf{K}(x,-x;h)=\frac{h}{2}. \label{GoursatTk2}%
\end{equation}

\end{theorem}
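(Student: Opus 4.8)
The plan is to prove the three assertions in turn, relying on Theorem~\ref{Th Transmutation of Powers K} and Remarks~\ref{Rem Second derivative and Factorization}--\ref{Rem Derivatives phi}. For the identity $\mathbf{T}[x^{k}]=\varphi_{k}$ I would insert $u(t)=t^{k}$ into \eqref{Tmain} and split the kernel \eqref{Kmain} into its three summands. The middle summand $K(x,t)$ reproduces exactly $T[x^{k}]$. The constant summand contributes $\frac{h}{2}\int_{-x}^{x}t^{k}\,dt$, which by parity is $0$ for odd $k$ and $\frac{h}{k+1}x^{k+1}$ for even $k$. For the double-integral summand I would integrate by parts in $t$; the function $\int_{t}^{x}\bigl(K(x,s)-K(x,-s)\bigr)\,ds$ vanishes at $t=x$ trivially and at $t=-x$ because the substitution $s\mapsto-s$ yields $\int_{-x}^{x}\bigl(K(x,s)-K(x,-s)\bigr)\,ds=0$, so both boundary terms disappear and this summand reduces to $\frac{h}{2(k+1)}\int_{-x}^{x}t^{k+1}\bigl(K(x,t)-K(x,-t)\bigr)\,dt$. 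The same parity substitution kills this for odd $k$ and turns it into $\frac{h}{k+1}\bigl(T[x^{k+1}]-x^{k+1}\bigr)$ for even $k$. Collecting terms gives $\mathbf{T}[x^{k}]=T[x^{k}]=\varphi_{k}$ for odd $k$ by \eqref{Txk_odd}, and $\mathbf{T}[x^{k}]=T[x^{k}]+\frac{h}{k+1}T[x^{k+1}]$ for even $k$; substituting \eqref{Txk_odd}--\eqref{Txk_even1} the correction exactly cancels and again $\mathbf{T}[x^{k}]=\varphi_{k}$.

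For the intertwining relation \eqref{TransmutC2} I would first check it on monomials. By the part just proved, $\mathbf{T}\bigl[-\frac{d^{2}}{dx^{2}}x^{k}\bigr]=-k(k-1)\mathbf{T}[x^{k-2}]=-k(k-1)\varphi_{k-2}$, whereas Remarks~\ref{Rem Second derivative and Factorization} and \ref{Rem Derivatives phi} give $\bigl(-\frac{d^{2}}{dx^{2}}+q\bigr)\varphi_{k}=-d_{2}^{f}\varphi_{k}=-k(k-1)\varphi_{k-2}$; the two sides coincide, so by linearity \eqref{TransmutC2} holds for every polynomial, and $\mathbf{T}$ sends polynomials into $C^{2}[-b,b]$ since each $\varphi_{k}$ is $C^{2}$. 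To reach an arbitrary $u\in C^{2}[-b,b]$ I would pick polynomials $u_{n}$ with $u_{n},u_{n}',u_{n}''$ converging uniformly to $u,u',u''$ (obtained by integrating twice a uniform polynomial approximation of $u''$). As $\mathbf{T}$ is the identity plus a Volterra operator with continuous kernel, it is continuous on $C[-b,b]$, so $\mathbf{T}[u_{n}]\to\mathbf{T}[u]$ and $\mathbf{T}[-u_{n}'']\to\mathbf{T}[-u'']$ uniformly; the polynomial identity then forces $(\mathbf{T}[u_{n}])''$ to converge uniformly, and a standard closure argument (recover uniform convergence of the first derivatives via Taylor's formula, then differentiate termwise) shows $\mathbf{T}[u]\in C^{2}$ with $(\mathbf{T}[u])''=\lim_{n}(\mathbf{T}[u_{n}])''$, so \eqref{TransmutC2} passes to the limit. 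This route uses only continuity of $q$.

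For the Goursat characterization of $\mathbf{K}$, now with $q\in C^{1}$ (which guarantees $K\in C^{2}$ and legitimizes the differentiations below), I would write $\mathbf{K}=\frac{h}{2}+K+\frac{h}{2}M$ with $M(x,t)=\int_{t}^{x}\bigl(K(x,s)-K(x,-s)\bigr)\,ds$. The boundary conditions \eqref{GoursatTk2} follow by inspection: at $t=x$ the integral $M$ vanishes and $K(x,x)=\frac{1}{2}\int_{0}^{x}q$, while at $t=-x$ both $K(x,-x)=0$ and $M(x,-x)=0$. For the equation, set $\mathcal{L}=\partial_{xx}-q-\partial_{tt}$; then $\mathcal{L}[\frac{h}{2}]=-\frac{h}{2}q$ and $\mathcal{L}[K]=0$ by \eqref{Goursat1}, so it remains to prove $\mathcal{L}[M]=q$. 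Differentiating $M$ twice and substituting $K_{xx}=q(x)K+K_{tt}$ from \eqref{Goursat1}, the term $q(x)M$ cancels against $-qM$ and the surviving $t$-integral of $K_{tt}$ collapses to boundary values that cancel $M_{tt}$, leaving $\mathcal{L}[M]=\frac{1}{2}q(x)+\bigl(K_{x}(x,x)-K_{x}(x,-x)+K_{t}(x,x)+K_{t}(x,-x)\bigr)$, where subscripts denote partial derivatives in the first and second slots. The bracket is evaluated by differentiating the Goursat data \eqref{Goursat2} along the characteristics: $\frac{d}{dx}K(x,x)=K_{x}(x,x)+K_{t}(x,x)=\frac{1}{2}q(x)$ and $\frac{d}{dx}K(x,-x)=K_{x}(x,-x)-K_{t}(x,-x)=0$, which together give the bracket the value $\frac{1}{2}q(x)$. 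Hence $\mathcal{L}[M]=q$ and $\mathcal{L}[\mathbf{K}]=0$; uniqueness is the classical uniqueness for this Goursat problem.

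The main obstacle is the identity $\mathcal{L}[M]=q$: it is not a formal cancellation, since the interior integrals collapse only after the equation \eqref{Goursat1} for $K$ is used, and the surviving boundary terms reproduce $q$ solely by virtue of the \emph{differentiated} Goursat conditions on $t=\pm x$. The hypothesis $q\in C^{1}$ (hence $K\in C^{2}$) is precisely what makes these steps rigorous; for merely continuous $q$ the kernel equation holds only in a generalized sense, which is exactly why the intertwining \eqref{TransmutC2} is best obtained through the polynomial-and-density argument rather than through the kernel PDE.
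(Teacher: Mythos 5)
Your proof is correct in all three parts, but note that the paper itself contains no proof of this theorem: it is imported from \cite{CKT} and \cite{KT}, and the discussion following the statement explains that \cite{CKT} proved it under the additional hypothesis $q\in C^{1}[-b,b]$, that \cite{KT} removed this hypothesis by means of the commutation relations \eqref{CommutT1dx}--\eqref{CommutT2dx} with the transmutation for the Darboux-transformed operator, and that the Goursat characterization \eqref{GoursatTk1}--\eqref{GoursatTk2} is also from \cite{KT}. Your argument is therefore a genuinely different, self-contained route built only on material actually stated in the paper. The computation giving $\mathbf{T}[x^{k}]=\varphi_{k}$ checks out: the splitting of \eqref{Kmain}, the single integration by parts whose boundary terms die because $\int_{-x}^{x}\left(K(x,s)-K(x,-s)\right)ds=0$, the parity dichotomy, and the exact cancellation of the $h$-terms against \eqref{Txk_odd}--\eqref{Txk_even1} are all correct. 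Your derivation of \eqref{TransmutC2} for merely continuous $q$ --- verify on monomials via $\frac{d^{2}}{dx^{2}}-q=d_{2}^{f}$ and $d_{2}^{f}\varphi_{k}=k(k-1)\varphi_{k-2}$, then extend to all of $C^{2}[-b,b]$ by polynomial density --- replaces the Darboux machinery of \cite{KT} by an elementary closure argument; the one step you leave compressed is that uniform convergence of $\mathbf{T}u_{n}$ and of $(\mathbf{T}u_{n})''=q\,\mathbf{T}u_{n}-\mathbf{T}[-u_{n}'']$ forces uniform convergence of the first derivatives, which a Landau-type inequality $\Vert v'\Vert\leq C\left(\Vert v\Vert+\Vert v''\Vert\right)$ on $[-b,b]$ supplies; this is routine. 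Your verification that the summand $M(x,t)=\int_{t}^{x}\left(K(x,s)-K(x,-s)\right)ds$ satisfies $M_{xx}-qM-M_{tt}=q$, using \eqref{Goursat1} to collapse the interior integral and the differentiated characteristic data $K_{x}(x,x)+K_{t}(x,x)=\frac{1}{2}q(x)$, $K_{x}(x,-x)-K_{t}(x,-x)=0$ to evaluate the boundary terms, is also correct, and your remark about why $q\in C^{1}$ is needed there is exactly the point the paper makes. What the cited route of \cite{KT} buys is the structural relations \eqref{CommutT1dx}--\eqref{CommutT2dx} themselves, which the paper needs anyway (e.g.\ for Proposition \ref{GeneralDerivTransm}); what your route buys is independence from those relations and from any smoothness of $q$ beyond continuity, at the cost of proving the intertwining only after the mapping property, rather than deriving both from the kernel PDE.
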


This theorem was proved in \cite{CKT} under an additional assumption that the
potential $q$ must be continuously differentiable, and in \cite{KT} it was
shown that this assumption was superfluous due to new observed relations
(\ref{CommutT1dx}), (\ref{CommutT2dx}) given below between the transmutations
for Darboux transformed Schr\"{o}dinger operators. The last statement of this
theorem was proved in \cite{KT}, also the interested reader may find in
\cite{KT} necessary changes regarding the case when $q\in C[-b,b]$. For
brevity, we omit these details in the present article.

In the following sections we use both the transmutation operator $\mathbf{T}$
and its inverse $\mathbf{T}^{-1}$, and the norms of these operators appear in
many estimates. Hence it is natural to obtain convenient estimates for the
norms. Remind that in \cite{KT} it was mentioned that to define the
transmutation operator $\mathbf{T}$, we need to know its integral kernel in
the domain $0\leq|t|\leq|x|\leq b$. But the Goursat problem
\eqref{GoursatTk1}--\eqref{GoursatTk2} is also well-posed and allows to
determine the kernel $\mathbf{K}(x,t;h)$ in the domain $0\leq|x|\leq|t|\leq
b$. Thus we may assume that the integral kernel $\mathbf{K}(x,t;h)$ is known
in the square $|x|\leq b$, $|t|\leq b$. In such case, there is a simple
representation of the inverse operator $\mathbf{T}^{-1}$.

\begin{theorem}
[\cite{KT}]\label{Th Inverse}The inverse operator $\mathbf{T}^{-1}$ can be
represented as the Volterra integral operator
\begin{equation}
\mathbf{T}^{-1}u(x)=u(x)-\int_{-x}^{x}\mathbf{K}(t,x;h)u(t)\,dt.
\label{Tinverse}%
\end{equation}

\end{theorem}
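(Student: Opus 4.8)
The plan is to treat $\mathbf{T}=I+\mathbb{V}$, where $\mathbb{V}u(x)=\int_{-x}^{x}\mathbf{K}(x,t;h)u(t)\,dt$ is a Volterra integral operator of the second kind on $C[-b,b]$: its kernel is supported in $|t|\le|x|$ and, by the remark preceding the statement, is continuous on the whole square $|x|,|t|\le b$. Standard Volterra theory then guarantees that $\mathbf{T}$ is boundedly invertible and that $\mathbf{T}^{-1}=I+\mathbb{W}$ for a \emph{unique} Volterra operator $\mathbb{W}$ with continuous resolvent kernel. Consequently it suffices to show that the explicit operator $\mathbf{S}$ defined by the right-hand side of \eqref{Tinverse}, namely $\mathbf{S}u(x)=u(x)-\int_{-x}^{x}\mathbf{K}(t,x;h)u(t)\,dt$, is a one-sided inverse of $\mathbf{T}$; by uniqueness of the inverse this forces $\mathbf{S}=\mathbf{T}^{-1}$. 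I would moreover reduce the verification to a dense set: since $\mathbf{T}[x^{k}]=\varphi_{k}$ by Theorem \ref{Th Transmute} and polynomials are dense in $C[-b,b]$ (Weierstrass), while $\mathbf{S}\mathbf{T}$ is continuous, it is enough to check $\mathbf{S}\varphi_{k}=x^{k}$ for every $k\in\mathbb{N}_{0}$.

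Substituting $\mathbf{S}$ into $\mathbf{T}\mathbf{S}u$ and collecting terms, the zeroth- and first-order contributions recombine and one is left with the requirement that, for all admissible $u$,
\[
\int_{-x}^{x}\bigl(\mathbf{K}(x,t;h)-\mathbf{K}(t,x;h)\bigr)u(t)\,dt=\int_{-x}^{x}\mathbf{K}(x,\tau;h)\!\int_{-\tau}^{\tau}\mathbf{K}(t,\tau;h)u(t)\,dt\,d\tau .
\]
Thus the whole theorem rests on the single kernel identity obtained by comparing integrands after applying Fubini's theorem to the right-hand double integral. Here the symmetric limits $\int_{-x}^{x}$, which encode the even/odd (parity) structure of Lions' midpoint transmutation, must be handled with care: the region of the iterated integral has to be split according to the sign of $\tau$, and the orientation of the inner integral tracked accordingly.

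To establish this identity I would use the fact, emphasized before the statement, that $\mathbf{K}(x,t;h)$ is known on the whole square and solves the Goursat problem \eqref{GoursatTk1}--\eqref{GoursatTk2}. One integrates by parts twice inside the double integral, using \eqref{GoursatTk1} to replace $\partial_{tt}\mathbf{K}$ by $(\partial_{xx}-q)\mathbf{K}$, so that the volume terms telescope while the boundary contributions, evaluated through the characteristic data $\mathbf{K}(\tau,\tau;h)$ and $\mathbf{K}(\tau,-\tau;h)$ of \eqref{GoursatTk2}, reproduce exactly the difference $\mathbf{K}(x,t;h)-\mathbf{K}(t,x;h)$ and cancel the left-hand side. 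A cleaner, equivalent route is by uniqueness: from the reverse intertwining $(-d^{2}/dx^{2})\mathbf{T}^{-1}=\mathbf{T}^{-1}(-d^{2}/dx^{2}+q)$ implied by \eqref{TransmutC2}, the resolvent kernel of $\mathbf{T}^{-1}$ must satisfy the dual Goursat problem in which the potential acts in the $t$-variable; a direct computation shows that $G(x,t):=-\mathbf{K}(t,x;h)$ satisfies precisely this problem, since under the swap of arguments the equation \eqref{GoursatTk1} turns into $G_{tt}-q(t)G=G_{xx}$ and the characteristic values match via \eqref{GoursatTk2}, whence uniqueness identifies the two kernels. I expect the main obstacle to be exactly this boundary bookkeeping along the characteristics $t=\pm x$ together with the parity splitting of the symmetric integrals; because the classical Goursat characterization requires $q\in C^{1}$, for merely continuous $q$ one finishes by approximating $q$ and passing to the limit, using the uniform boundedness of $\mathbf{T}$ and $\mathbf{T}^{-1}$ established in \cite{KT}.
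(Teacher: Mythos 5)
First, a point of reference: the paper itself gives \emph{no} proof of Theorem \ref{Th Inverse} --- it is quoted from \cite{KT} --- so there is no internal argument to compare yours against, and your attempt must be judged on its own. Its skeleton is sound and is in fact close in spirit to \cite{KT}: Volterra theory does give $\mathbf{T}^{-1}=I+\mathbb{W}$ with $\mathbb{W}$ a uniquely determined Volterra operator; your algebraic reduction of $\mathbf{T}\mathbf{S}=I$ to the displayed kernel identity is correct; and so is your computation that $G(x,t):=-\mathbf{K}(t,x;h)$ (which indeed requires knowing $\mathbf{K}$ on the whole square) satisfies $G_{xx}=G_{tt}-q(t)G$ with $G(x,x)=-\tfrac{h}{2}-\tfrac12\int_0^x q(s)\,ds$ and $G(x,-x)=-\tfrac{h}{2}$, by \eqref{GoursatTk1}--\eqref{GoursatTk2}.

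The genuine gap is in the step ``whence uniqueness identifies the two kernels.'' The reverse intertwining $(-d^2/dx^2)\,\mathbf{T}^{-1}=\mathbf{T}^{-1}(-d^2/dx^2+q)$ forces on the resolvent kernel $L$ of $\mathbb{W}$ only the equation $L_{xx}=L_{tt}-q(t)L$ together with the \emph{derivative} conditions $\frac{d}{dx}\bigl[L(x,x)\bigr]=-q(x)/2$ and $\frac{d}{dx}\bigl[L(x,-x)\bigr]=0$: the standard integration-by-parts computation that produces the kernel characterization never sees the characteristic traces themselves, only their derivatives. So the ``dual Goursat problem'' you invoke is underdetermined --- an additive constant on the characteristics is free --- and uniqueness cannot conclude. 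This is not pedantry: for every $h'\in\mathbb{C}$ the kernel \eqref{Kmain} with $h'$ in place of $h$ yields an operator $\mathbf{T}_{h'}$ satisfying the same relation \eqref{TransmutC2}, hence every $\mathbf{T}_{h'}^{-1}$, whose kernel is $-\mathbf{K}(t,x;h')$, satisfies your reverse intertwining; these kernels are all distinct, so the intertwining alone cannot single out $h'=h$. To close the gap you need a normalization pinning the characteristic values of $L$, for instance: (i) in the Neumann series $\mathbb{W}=-\mathbb{V}+\mathbb{V}^2-\cdots$ every iterated kernel of order $\geq 2$ vanishes on $|t|=|x|$ (the intermediate variable runs over a degenerate interval), so $L(x,\pm x)=-\mathbf{K}(x,\pm x;h)$, which matches $G$ by \eqref{GoursatTk2}; or (ii) check the two candidates agree on one function to first order at the origin, e.g.\ $\frac{d}{dx}\bigl(\mathbf{S}f\bigr)(0)=f'(0)-2\mathbf{K}(0,0;h)=h-h=0=\frac{d}{dx}\bigl(\mathbf{T}^{-1}f\bigr)(0)$, since $\mathbf{T}^{-1}f\equiv 1$; this kills the free constant, after which Goursat uniqueness applies. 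Two further cautions: your primary route (the double integration by parts with ``telescoping'' volume terms) is asserted, not carried out, and it is precisely where the work lies; and both routes need the regularity caveat you raise at the end, since Goursat uniqueness requires the resolvent kernel to be a classical solution --- in \cite{KT} this is handled by passing to characteristic variables $H(u,v)=\mathbf{K}(u+v,u-v;h)$ rather than by approximating $q$, which is cleaner than the limiting argument you sketch.
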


Both $\mathbf{T}$ and $\mathbf{T}^{-1}$ are obviously bounded as operators
from $C[-b,b]$ to itself. The estimates for their norms depend on the
estimates for the integral kernels, e.g., for $\left\Vert \mathbf{T}%
\right\Vert $ we have $\left\Vert \mathbf{T}\right\Vert \leq1+2b\max\left\vert
\mathbf{K}(x,t;h)\right\vert $. Some estimates for the integral kernel
$K(x,t)$ can be found in \cite{Marchenko}. From them corresponding estimates
for the kernel $\mathbf{K}(x,t;h)$ can be obtained using (\ref{Kmain}).
However, the growth with the increase of the interval of mentioned estimates
is immensely fast even for the simplest potentials. We adapt the general
method of successive approximations for solving Goursat problems (see, e.g.
\cite{Vladimirov}) to obtain better estimates for the kernel $\mathbf{K}%
(x,t;h)$.

\begin{proposition}
\label{PrKEstimate} Let $q$ be a continuous complex valued function of an
independent real variable $x\in[-b,b]$. Then the kernel $\mathbf{K}(x,t;h)$ in
the square $|x|\le b$, $|t|\le b$ satisfies the following estimate
\begin{equation}
\label{KEstimate}|\mathbf{K}(x,t;h)|\le\frac{|h|}{2} I_{0}\big(\sqrt
{c|x^{2}-t^{2}|}\big)+\frac12 \frac{\sqrt{c|x^{2}-t^{2}|}I_{1}\big(\sqrt
{c|x^{2}-t^{2}|}\big)}{|x-t|},
\end{equation}
where $c=\max_{[-b,b]}|q(x)|$ and $I_{0}$ and $I_{1}$ are modified Bessel
functions of the first kind.
\end{proposition}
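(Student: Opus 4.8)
The plan is to pass to characteristic coordinates and convert the Goursat problem \eqref{GoursatTk1}--\eqref{GoursatTk2} into an equivalent Volterra integral equation, which I then solve by successive approximations while controlling the size of each iterate. Set $\xi=x+t$ and $\eta=x-t$, so that $\partial_x^2-\partial_t^2=4\,\partial_\xi\partial_\eta$ and the characteristics $t=x$, $t=-x$ on which the data are prescribed become the coordinate axes $\eta=0$ and $\xi=0$. Writing $\widehat{K}(\xi,\eta):=\mathbf{K}(x,t;h)$, equation \eqref{GoursatTk1} reads $\partial_\xi\partial_\eta\widehat{K}=\tfrac14\,q\big(\tfrac{\xi+\eta}{2}\big)\widehat{K}$, while \eqref{GoursatTk2} gives $\widehat{K}(\xi,0)=\tfrac h2+\tfrac12\int_0^{\xi/2}q(s)\,ds$ and $\widehat{K}(0,\eta)=\tfrac h2$. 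Integrating first in $\xi$ from $0$ (using $\partial_\eta\widehat{K}(0,\eta)=0$, since $\widehat{K}(0,\eta)\equiv h/2$) and then in $\eta$ from $0$ yields the integral equation
\[
\widehat{K}(\xi,\eta)=\frac h2+\frac12\int_0^{\xi/2}q(s)\,ds+\frac14\int_0^{\eta}\!\!\int_0^{\xi}q\Big(\frac{\xi'+\eta'}{2}\Big)\widehat{K}(\xi',\eta')\,d\xi'\,d\eta'.
\]
This reformulation is also what lets me work with merely continuous $q$ (rather than the $C^1$ hypothesis used to derive \eqref{GoursatTk1}): the integral equation still has a unique continuous solution, which coincides with the kernel produced by \eqref{Kmain}.

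Next I would solve by Picard iteration, exploiting linearity to treat the two source terms separately. Write $\widehat{K}=\sum_{n\ge0}(A_n+B_n)$, with $A_0=\tfrac h2$, $B_0=\tfrac12\int_0^{\xi/2}q$, and each $A_{n+1},B_{n+1}$ obtained by applying the double-integral operator above to $A_n,B_n$. I claim the bounds
\[
|A_n|\le\frac{|h|}{2}\,\frac{(c\,\xi\eta)^{n}}{4^{n}(n!)^{2}},\qquad |B_n|\le\frac{c^{\,n+1}\,\xi^{\,n+1}\eta^{\,n}}{4^{\,n+1}\,n!\,(n+1)!},
\]
both proved by induction; the inductive step simply integrates the monomial bound in $\xi'$ and $\eta'$ and matches the factorials. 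Summing the first family gives $\tfrac{|h|}{2}\sum_n(c\,\xi\eta/4)^{n}/(n!)^{2}=\tfrac{|h|}{2}\,I_0\big(\sqrt{c\,\xi\eta}\,\big)$, and summing the second gives $\sum_n c^{\,n+1}\xi^{\,n+1}\eta^{\,n}/\big(4^{\,n+1}n!(n+1)!\big)=\tfrac12\,\eta^{-1}\sqrt{c\,\xi\eta}\,I_1\big(\sqrt{c\,\xi\eta}\,\big)$, upon recognizing the defining power series of the modified Bessel functions $I_0$ and $I_1$. The separation into an $h$-contribution and a $q$-integral contribution is exactly what produces the two terms of \eqref{KEstimate}.

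Finally I would translate back through $\xi\eta=x^{2}-t^{2}$ and $\eta=x-t$, which turns the two sums into the right-hand side of \eqref{KEstimate}. The step needing genuine care is the orientation of the integrations once $(x,t)$ ranges over the whole square $|x|\le b$, $|t|\le b$ and not only over the region $|t|\le|x|$: the signs of $\xi$ and $\eta$ may then be negative, so one integrates from the boundary along the appropriate direction and estimates $\big|\int_0^{\xi}\big|\le\int_0^{|\xi|}$ and $\big|\int_0^{\eta}\big|\le\int_0^{|\eta|}$. Carrying these absolute values through the induction replaces $\xi\eta$ by $|\xi\eta|=|x^{2}-t^{2}|$ and $\eta$ by $|\eta|=|x-t|$, yielding precisely \eqref{KEstimate}.
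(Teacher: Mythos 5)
Your proof is correct and takes essentially the same route as the paper: both pass to characteristic coordinates, recast the Goursat problem \eqref{GoursatTk1}--\eqref{GoursatTk2} as a Volterra-type integral formulation, and run successive approximations whose iterates are bounded by monomials of the form $\frac{|h|}{2}\frac{(c|\xi\eta|/4)^n}{(n!)^2}$ and $\frac{c^{n+1}|\xi|^{n+1}|\eta|^n}{4^{n+1}\,n!\,(n+1)!}$, which sum exactly to the two Bessel series of \eqref{KEstimate}. The only cosmetic differences are that the paper works with $u=(x+t)/2$, $v=(x-t)/2$ and iterates a system of two coupled integral equations for $H$ and $G=\partial_u H$ rather than your single double-integral equation, and that the paper justifies by citation to \cite{KT} the fact (which you assert without proof) that for merely continuous $q$ the kernel, while not a classical solution of \eqref{GoursatTk1}--\eqref{GoursatTk2}, does solve the transformed Goursat problem in characteristic variables.
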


\begin{remark}
Note that for the case of operator $\partial^{2}-c$ with constant potential
$c>0$ and $h>0$ the exact kernel of transmutation operator in the domain $0\le
t\le x\le b$ coincides with the right-hand side of \eqref{KEstimate}, see
\cite{CKT}.
\end{remark}

\begin{proof}
The proof follows the proof from \cite{Vladimirov}. First, we introduce the
function $H(u,v):=\mathbf{K}(u+v, u-v; h)$. It satisfies the Goursat problem
(see \cite{KT})
\begin{equation}
\frac{\partial^{2}H(u,v)}{\partial u\,\partial v}=q(u+v)H(u,v),
\label{GoursatTh1}%
\end{equation}
\begin{equation}
H(u,0)=\frac{h}{2}+\frac{1}{2}\int_{0}^{u}q(s)\,ds,\qquad H(0,v)=\frac{h}{2}
\label{GoursatTh2}%
\end{equation}
in the domain $|u|+|v|\le b$. It is worth mentioning that despite the kernel
$\mathbf{K}(x,t;h)$ is not the classical solution of the problem
\eqref{GoursatTk1}--\eqref{GoursatTk2} in the case when $q\in C[-b,b]$,
nevertheless the function $H(u,v)$ is a classical solution of the problem
\eqref{GoursatTh1}--\eqref{GoursatTh2}, see \cite{KT}. Define $G:=\frac
{\partial H}{\partial u}$. Then the Goursat problem
\eqref{GoursatTh1}--\eqref{GoursatTh2} is equivalent to the system of integral
equations
\[%
\begin{cases}
H(u,v)=\frac h2+\int_{0}^{u} G(u^{\prime}, v)\, du^{\prime}\\
G(u,v)=\frac12 q(u)+\int_{0}^{v} q(u+v^{\prime})H(u,v^{\prime})\, dv^{\prime}.
\end{cases}
\]
Applying the successive approximations method for this system, we obtain
\[
|H(u,v)|\le\frac{|h|}2\sum_{k=0}^{\infty}\frac{c^{k} |uv|^{k}}{k! k!}%
+\frac12\sum_{k=0}^{\infty}\frac{c^{k+1}|u|^{k+1}|v|^{k}}{(k+1)!k!},
\]
which coincides with \eqref{KEstimate}.
\end{proof}

Since the function $I_{1}(x)/x$ is monotone increasing for $x>0$, we obtain
\[
\frac{\sqrt{c|x^{2}-t^{2}|}I_{1}\big(\sqrt{c|x^{2}-t^{2}|}\big)}%
{|x-t|}=c|x+t|\frac{I_{1}\big(\sqrt{c|x^{2}-t^{2}|}\big)}{\sqrt{c|x^{2}%
-t^{2}|}}\le2bc\frac{I_{1}(b\sqrt{c})}{b\sqrt{c}}=2\sqrt{c}I_{1}(b\sqrt{c})
\]
for $|x|\le b$ and $|t|\le b$, and the following estimate for the norms of
transmutation operator and of its inverse immediately follows from Proposition
\ref{PrKEstimate}.

\begin{corollary}
The following estimate holds
\[
\max\big\{\|\mathbf{T}\|, \|\mathbf{T}^{-1}\|\big\} \le1+b\left(  |h|
I_{0}(b\sqrt{c}) + 2\sqrt{c} I_{1}(b\sqrt{c})\right)  ,
\]
where $c=\max_{[-b,b]}|q(x)|$ and $I_{0}$ and $I_{1}$ are modified Bessel
functions of the first kind.
\end{corollary}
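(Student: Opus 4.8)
The plan is to reduce both operator norms to a single uniform pointwise bound on the integral kernel and then integrate. Both $\mathbf{T}$ and $\mathbf{T}^{-1}$ are Volterra integral operators on $C[-b,b]$, given by \eqref{Tmain} and \eqref{Tinverse} with kernels $\mathbf{K}(x,t;h)$ and $-\mathbf{K}(t,x;h)$ respectively, and in both the variable $t$ ranges over $[-x,x]$, an interval of length $2|x|\le 2b$. For an operator of the form $u\mapsto u(x)+\int_{-x}^{x}N(x,t)u(t)\,dt$ on $C[-b,b]$ with the maximum norm, the operator norm is bounded by $1+\sup_{|x|\le b}\int_{-x}^{x}|N(x,t)|\,dt$. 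Hence it suffices to produce a bound on $|\mathbf{K}(x,t;h)|$ that is uniform over the square $|x|,|t|\le b$ and to multiply it by the length $2b$ of the longest admissible integration interval.

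First I would take the kernel estimate \eqref{KEstimate} from Proposition \ref{PrKEstimate} and bound each of its two terms by a constant. For the first term I would use that $|x^{2}-t^{2}|\le b^{2}$ on the square $|x|,|t|\le b$ together with the monotonicity of $I_{0}$ on $[0,\infty)$ to get $I_{0}(\sqrt{c|x^{2}-t^{2}|})\le I_{0}(b\sqrt{c})$. For the second term I would invoke the inequality displayed immediately before the statement, namely
\[
\frac{\sqrt{c|x^{2}-t^{2}|}\,I_{1}\big(\sqrt{c|x^{2}-t^{2}|}\big)}{|x-t|}\le 2\sqrt{c}\,I_{1}(b\sqrt{c}),
\]
which rests on the monotonicity of $I_{1}(x)/x$ for $x>0$. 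Combining the two gives the uniform pointwise bound
\[
|\mathbf{K}(x,t;h)|\le \frac{|h|}{2}I_{0}(b\sqrt{c})+\sqrt{c}\,I_{1}(b\sqrt{c}).
\]

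Integrating this bound over $t\in[-x,x]$ and using that the integration interval has length at most $2b$ yields $\int_{-x}^{x}|\mathbf{K}(x,t;h)|\,dt\le b\bigl(|h|I_{0}(b\sqrt{c})+2\sqrt{c}\,I_{1}(b\sqrt{c})\bigr)$, and adding the leading $1$ gives the claimed estimate for $\|\mathbf{T}\|$. For $\mathbf{T}^{-1}$ the only change is that the kernel $\mathbf{K}(t,x;h)$ carries its arguments in the opposite order; but the right-hand side of \eqref{KEstimate} depends on $(x,t)$ only through the symmetric quantities $|x^{2}-t^{2}|$ and $|x-t|$, so the same uniform pointwise bound applies verbatim and the identical estimate follows. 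Taking the maximum of the two equal bounds completes the argument. I do not anticipate a genuine obstacle here: all the real content sits in Proposition \ref{PrKEstimate} and in the monotonicity estimate already recorded, while the remaining steps are just the standard Volterra norm bound and an elementary integration.
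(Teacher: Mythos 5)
Your proposal is correct and follows essentially the same route as the paper: the paper also combines the uniform Volterra norm bound $\|\mathbf{T}\|\le 1+2b\max|\mathbf{K}(x,t;h)|$ (stated just before Proposition \ref{PrKEstimate}) with the kernel estimate \eqref{KEstimate} and the displayed monotonicity inequality for $I_{1}(x)/x$, treating $\mathbf{T}^{-1}$ identically via the symmetry of the bound under the interchange of $x$ and $t$. You have merely spelled out the steps the paper compresses into ``immediately follows.''
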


Together with the operator $\frac{d^{2}}{dx^{2}}-q(x)$ let us consider a
Darboux associated operator $\frac{d^{2}}{dx^{2}}-q_{D}(x)$ with the potential
defined by the equality $q_{D}=-q+2\left(  \frac{f^{\prime}}{f}\right)  ^{2}$
where $f\in C^{2}[-b,b]$ is a solution of (\ref{SLhom}), $f\neq0$ on $[-b,b]$,
$f(0)=1$ and $h=f^{\prime}(0)\in\mathbb{C}$. In \cite{KT} explicit formulas
were obtained for the kernel $\mathbf{K}_{D}(x,t;-h)$ in terms of
$\mathbf{K}(x,t;h)$, where $\mathbf{K}_{D}(x,t;-h)$ is the integral kernel of
the transmutation operator $\mathbf{T}_{D}$ which satisfies the equality
\[
\left(  -\frac{d^{2}}{dx^{2}}+q_{D}(x)\right)  \mathbf{T}_{D}[u]=\mathbf{T}%
_{D}\left[  -\frac{d^{2}}{dx^{2}}(u)\right]
\]
for any $u\in C^{2}[-b,b]$ and transforms $x^{k}$ into the functions $\psi
_{k}(x)$, $k\in\mathbb{N}_{0}$ defined by the relations (\ref{psik}). Note
that $\psi_{0}$ is obviously a solution of $\left(  -\frac{d^{2}}{dx^{2}%
}+q_{D}(x)\right)  \psi_{0}=0$ with the initial values $\psi_{0}(0)=1$ and
$\psi_{0}^{\prime}(0)=-h$.

The operator $\mathbf{T}_{D}$ has the form \cite{KT}
\[
\mathbf{T}_{D}[u](x)=u(x)+\int_{-x}^{x}\mathbf{K}_{D}%
(x,t;-h)u(t)\,dt,\label{T2}%
\]
with the kernel
\[
\mathbf{K}_{D}(x,t;-h)=-\frac{1}{f(x)}\bigg(\int_{-t}^{x}\partial
_{t}\mathbf{K}(s,t;h)f(s)\,ds+\frac{h}{2}f(-t)\bigg)\label{K2},
\]
and the following operator equalities hold on $C^{1}[-b,b]$:
\begin{align}
\frac{d}{dx}f\mathbf{T}_{D}  &  =f\mathbf{T}\frac{d}{dx}\label{CommutT1dx}\\
\frac{d}{dx}\frac{1}{f}\mathbf{T}  &  =\frac{1}{f}\mathbf{T}_{D}\frac{d}{dx}.
\label{CommutT2dx}%
\end{align}
These commutation equalities involving the operators of transmutation and
derivatives together with the property of the transmutation operators that if $u\in C^1[-b,b]$ then $\mathbf{T}^{-1} u \in C^1[-b,b]$, see Theorem \ref{Th Inverse},  lead to the following useful statement.

\begin{proposition}
[\cite{KT Obzor}]\label{GeneralDerivTransm} Let $u\in C^{n}[-b,b]$ and
$g=\mathbf{T}u$. Then there exist the first $n$ $f$-derivatives of $g$ on
$[-b,b]$, and the following equalities hold for $0\leq k\leq n$
\[
d_{k}^{f}(g)=\mathbf{T}_{D}u^{(k)},\qquad k\text{ odd},
\]
and
\[
d_{k}^{f}(g)=\mathbf{T}u^{(k)},\qquad k\text{ even}.
\]
The inverse statement, i.e., if there exist the first $n$ $f$-derivatives of
$g$ on $[-b,b]$, then $u=\mathbf{T}^{-1}g\in C^{n}[-b,b]$ is also true.
\end{proposition}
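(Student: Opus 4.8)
The plan is to establish both the direct and the inverse assertions by induction on $k$, using only Definition \ref{Def f-derivatives} of the $f$-derivatives together with the commutation identities \eqref{CommutT1dx} and \eqref{CommutT2dx} for the direct part and the $C^1$-preservation property of $\mathbf{T}^{-1}$ recorded with Theorem \ref{Th Inverse} for the inverse part.

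For the direct statement I would start from the base case $k=0$, which is immediate since $d_0^f(g)=g=\mathbf{T}u=\mathbf{T}u^{(0)}$ and $0$ is even. In the inductive step I distinguish the parity of $k$. Suppose $0\le k\le n-1$ is even and $d_k^f(g)=\mathbf{T}u^{(k)}$ is already known. Since $k+1$ is odd, Definition \ref{Def f-derivatives} gives $d_{k+1}^f(g)=f\frac{d}{dx}\bigl(\frac{1}{f}\mathbf{T}u^{(k)}\bigr)$; as $u\in C^n[-b,b]$ and $k+1\le n$, we have $u^{(k)}\in C^1[-b,b]$, so \eqref{CommutT2dx} may be applied to $u^{(k)}$ and yields $\frac{d}{dx}\bigl(\frac{1}{f}\mathbf{T}u^{(k)}\bigr)=\frac{1}{f}\mathbf{T}_Du^{(k+1)}$, whence $d_{k+1}^f(g)=\mathbf{T}_Du^{(k+1)}$, matching the stated odd-index formula. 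The odd-to-even step is completely symmetric: for $k$ odd with $d_k^f(g)=\mathbf{T}_Du^{(k)}$ one writes $d_{k+1}^f(g)=\frac{1}{f}\frac{d}{dx}\bigl(f\,\mathbf{T}_Du^{(k)}\bigr)$ and uses \eqref{CommutT1dx} to obtain $\mathbf{T}u^{(k+1)}$. Because each right-hand side is a transmutation operator applied to a continuous function, every $f$-derivative up to order $n$ both exists and is continuous.

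For the inverse statement I would bootstrap the classical smoothness of $u=\mathbf{T}^{-1}g$, proving by induction that $u\in C^k[-b,b]$ for $0\le k\le n$. The base case $u\in C^0[-b,b]$ follows from the boundedness of $\mathbf{T}^{-1}$ on $C[-b,b]$. For the step, assume $u\in C^k[-b,b]$ with $k<n$; the already-proved direct statement then gives $d_k^f(g)=\mathbf{T}u^{(k)}$ when $k$ is even (and $d_k^f(g)=\mathbf{T}_Du^{(k)}$ when $k$ is odd). The hypothesis that $d_{k+1}^f(g)$ exists as a continuous function means, via Definition \ref{Def f-derivatives}, that $\frac{1}{f} d_k^f(g)$ (respectively $f\,d_k^f(g)$) is continuously differentiable; since $f\in C^2[-b,b]$ is nonvanishing, this forces $d_k^f(g)\in C^1[-b,b]$, i.e. $\mathbf{T}u^{(k)}\in C^1[-b,b]$. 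Applying the property from Theorem \ref{Th Inverse} that $\mathbf{T}^{-1}$ maps $C^1[-b,b]$ into itself, I conclude $u^{(k)}=\mathbf{T}^{-1}\bigl(\mathbf{T}u^{(k)}\bigr)\in C^1[-b,b]$, that is $u\in C^{k+1}[-b,b]$; the odd case is identical with $\mathbf{T}_D^{-1}$ in place of $\mathbf{T}^{-1}$.

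The main obstacle I anticipate is the inverse direction: one must simultaneously gain one order of ordinary smoothness of $d_k^f(g)$ from the existence of the next $f$-derivative and transfer that regularity back to $u^{(k)}$. The delicate ingredient is that $\mathbf{T}^{-1}$ and $\mathbf{T}_D^{-1}$ preserve $C^1$-regularity; for $\mathbf{T}^{-1}$ this is precisely the property attached to Theorem \ref{Th Inverse}, and for $\mathbf{T}_D^{-1}$ it follows because $\mathbf{T}_D$ is a Volterra transmutation operator of exactly the same form built for the Darboux-associated potential $q_D$, so the identical reasoning applies. Once this regularity transfer is secured, both inductions close routinely.
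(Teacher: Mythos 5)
Your proof is correct and takes essentially the same route as the paper: the paper gives no detailed proof (the proposition is imported from \cite{KT Obzor}), but the sentence preceding it names exactly your two ingredients --- the commutation relations \eqref{CommutT1dx}--\eqref{CommutT2dx} for the direct part and the $C^{1}$-preservation property of $\mathbf{T}^{-1}$ (and likewise of $\mathbf{T}_{D}^{-1}$) for the inverse part --- and your two inductions simply flesh out that sketch. Your reading of ``exist'' in the inverse statement as ``exist and are continuous'' is also the right and necessary one, since pointwise existence alone would make the claim false already in the trivial case $f\equiv 1$, $\mathbf{T}=\mathrm{id}$.
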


\section{Generalized wave polynomials}

Let us consider the following Klein-Gordon equation with a position dependent
mass%
\begin{equation}
\left(  \frac{\partial^{2}}{\partial x^{2}}-q(x)\right)  u(x,t)=\frac
{\partial^{2}}{\partial t^{2}}u(x,t)\label{KG}%
\end{equation}
where we assume that $q:[-b,b]\rightarrow\mathbb{C}$ and $q\in C[-b,b]$.
Suppose there exists a particular solution $f$ of equation (\ref{SLhom}) such
that $f\in C^{2}[-b,b]$ and $f\neq0$ on $[-b,b]$. We normalize it as $f(0)=1$
and set $h:=f^{\prime}(0)$.

Consider the system of functions $\left\{  \varphi_{k}\right\}  _{k=0}%
^{\infty}$ defined by (\ref{phik}) with $x_{0}=0$. Then due to Theorem
\ref{Th Transmute}, $\varphi_{k}(x)=$ $\mathbf{T}x^{k}$ for any $k\in
\mathbb{N}_{0}$ and due to (\ref{pm}) we obtain that the functions
\begin{equation}
u_{0}=f(x)\text{,\qquad}u_{m}(x,t)=%
\begin{cases}
{\displaystyle\sum_{\text{even }k=0}^{\frac{m+1}{2}}\binom{\frac{m+1}{2}}%
{k}\varphi_{\frac{m+1}{2}-k}(x)t^{k},} & m\text{ odd},\\
{\displaystyle\sum_{\text{odd }k=1}^{\frac{m}{2}}\binom{\frac{m}{2}}{k}%
\varphi_{\frac{m}{2}-k}(x)t^{k},} & m\text{ even},
\end{cases}
\label{um}%
\end{equation}
are solutions of (\ref{KG}) for any $-b<x<b$ and $-\infty<t<\infty$. Indeed,
we have that
\begin{equation}
u_{m}=\mathbf{T}p_{m}\quad\text{for every }m\in\mathbb{N}_{0}\text{.}%
\label{umTpm}%
\end{equation}
Moreover, the functions $u_{m}$ arise also as scalar (real, when $f$ is real
valued) parts of hyperbolic pseudoanalytic formal powers corresponding to the
generating pair $(f,j/f)$ where $\text{j}$ is a hyperbolic imaginary unit,
$j^{2}=1$ (see \cite{KRT}, \cite{APFT}).

Equalities (\ref{umTpm}) together with the completeness of the wave
polynomials (Theorem \ref{Th Completeness wave polynomials}) and the
boundedness of $\mathbf{T}$ and $\mathbf{T}^{-1}$ imply the completeness of
the \textbf{generalized wave polynomials }$u_{m}$ in the linear space of
regular solutions of (\ref{KG}).

\begin{theorem}
Let $u\in C^{2}(\overline{\mathbf{R}})$ be a solution of \eqref{KG} in
$\mathbf{R}$ where $\mathbf{R}$ is a square with the vertices $(\pm b,0)$ and
$(0,\pm b)$. Then there exists a sequence of generalized wave polynomials
$U_{N}=\sum_{n=0}^{N}a_{n}u_{n}$ uniformly convergent to $u$ in $\overline
{\mathbf{R}}$.
\end{theorem}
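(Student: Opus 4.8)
The plan is to transport the completeness of the ordinary wave polynomials (Theorem \ref{Th Completeness wave polynomials}) across the transmutation operator $\mathbf{T}$, exploiting the factorization $u_{m}=\mathbf{T}p_{m}$ recorded in \eqref{umTpm}. First I would note that $\mathbf{R}$, the square with vertices $(\pm b,0)$ and $(0,\pm b)$, is exactly the square $\overline{R}$ of Theorem \ref{Th Completeness wave polynomials} with its parameter taken equal to $b/2$; in particular every $(x,t)\in\overline{\mathbf{R}}$ satisfies $x\in[-b,b]$, so $\mathbf{T}$ and $\mathbf{T}^{-1}$, acting in the $x$-variable with $t$ as a parameter, are defined on the relevant slices. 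The three steps are: pull $u$ back to a wave-equation solution $w=\mathbf{T}^{-1}u$; approximate $w$ uniformly by wave polynomials $P_{N}=\sum_{n=0}^{N}a_{n}p_{n}$; and push forward, setting $U_{N}=\mathbf{T}P_{N}=\sum_{n=0}^{N}a_{n}u_{n}$, transferring the uniform estimate back through $\mathbf{T}$.

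For the first step I would define $w(x,t)=\mathbf{T}^{-1}[u(\cdot,t)](x)$. Because both $\mathbf{T}$ and $\mathbf{T}^{-1}$ are Volterra operators whose integration runs over $[-x,x]$ (Theorem \ref{Th Inverse}), the value at $(x,t)$ uses only data at points $(s,t)$ with $|s|\le|x|\le b-|t|$, all lying in $\overline{\mathbf{R}}$; hence $w$ is well defined on $\overline{\mathbf{R}}$ and respects the light-cone geometry of the square. Regularity in $t$ is immediate by differentiating under the integral sign, giving $w_{tt}(\cdot,t)=\mathbf{T}^{-1}[u_{tt}(\cdot,t)]$ and $u_{tt}(\cdot,t)=\mathbf{T}[w_{tt}(\cdot,t)]$, both continuous. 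Regularity in $x$ is supplied by Proposition \ref{GeneralDerivTransm}: since $u$ is $C^{2}$ in $x$ it possesses its first two $f$-derivatives, so $w=\mathbf{T}^{-1}u$ is $C^{2}$ in $x$. Having $w(\cdot,t)\in C^{2}$, I may apply the transmutation identity \eqref{TransmutC2} to each slice, obtaining $(\partial_{x}^{2}-q)u(\cdot,t)=\mathbf{T}[w_{xx}(\cdot,t)]$; combined with the Klein-Gordon equation $(\partial_{x}^{2}-q)u=u_{tt}=\mathbf{T}[w_{tt}]$ and the injectivity of $\mathbf{T}$, this forces $w_{xx}=w_{tt}$. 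Since the right-hand side is jointly continuous, so is $w_{xx}$, and $w\in C^{2}(\overline{\mathbf{R}})$ solves the wave equation \eqref{wave equation} in $\mathbf{R}$.

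For the second and third steps, Theorem \ref{Th Completeness wave polynomials} applied with parameter $b/2$ produces wave polynomials $P_{N}$ with $\sup_{\overline{\mathbf{R}}}|w-P_{N}|\to0$. Setting $U_{N}=\mathbf{T}P_{N}$ and using $\mathbf{T}p_{n}=u_{n}$ from \eqref{umTpm} together with \eqref{um}, the function $U_{N}=\sum_{n=0}^{N}a_{n}u_{n}$ is a generalized wave polynomial. Since $u=\mathbf{T}w$, for each $(x,t)\in\overline{\mathbf{R}}$ I would estimate
\[
|u(x,t)-U_{N}(x,t)|=\bigl|\mathbf{T}\bigl[w(\cdot,t)-P_{N}(\cdot,t)\bigr](x)\bigr|\le\|\mathbf{T}\|\,\sup_{\overline{\mathbf{R}}}|w-P_{N}|,
\]
where the uniform bound $\|\mathbf{T}\|\le1+2b\max|\mathbf{K}(x,t;h)|$ applies slice-by-slice thanks to the Volterra structure, so that only values of $w-P_{N}$ inside $\overline{\mathbf{R}}$ enter. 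Letting $N\to\infty$ yields uniform convergence $U_{N}\to u$ on $\overline{\mathbf{R}}$.

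The main obstacle is the first step: verifying that $w=\mathbf{T}^{-1}u$ is a genuine classical $C^{2}$ solution of the wave equation rather than merely a formal preimage. The delicate points are the $C^{2}$-regularity of $w$ in $x$, which is exactly what Proposition \ref{GeneralDerivTransm} guarantees and which must be in hand before the differential identity \eqref{TransmutC2} can legitimately be applied to the slices $w(\cdot,t)$, and the geometric compatibility of the Volterra integration domain $[-x,x]$ with the square $\mathbf{R}$, ensuring that no values of $u$ or $w$ outside $\overline{\mathbf{R}}$ are ever required. Once these are secured, the remaining transport of the uniform estimate through the bounded operator $\mathbf{T}$ is routine.
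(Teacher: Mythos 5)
Your proof is correct and follows essentially the same route as the paper's: pull $u$ back to $w=\mathbf{T}^{-1}u$, a $C^{2}$ solution of the wave equation, approximate $w$ by wave polynomials via Theorem \ref{Th Completeness wave polynomials}, and push forward through the bounded operator $\mathbf{T}$ using $u_{n}=\mathbf{T}p_{n}$. The only difference is one of detail: you explicitly justify the step the paper merely asserts, namely that $w=\mathbf{T}^{-1}u$ is a genuine $C^{2}$ solution of \eqref{wave equation}, by combining the slice-wise Volterra structure with Proposition \ref{GeneralDerivTransm} and the transmutation identity \eqref{TransmutC2}.
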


\begin{proof}
We have that $u=$ $\mathbf{T}w$ where $w$ is a $C^{2}$-solution of
(\ref{wave equation}) and due to Theorem
\ref{Th Completeness wave polynomials} for any $\varepsilon_{1}>0$ there
exists a wave polynomial $P_{N}$ such that $\max_{\overline{\mathbf{R}}%
}\left\vert w-P_{N}\right\vert <\varepsilon_{1}$. Thus, $\max_{\overline
{\mathbf{R}}}\left\vert u-\mathbf{T}P_{N}\right\vert =\max_{\overline
{\mathbf{R}}}\left\vert \mathbf{T}w-\mathbf{T}P_{N}\right\vert \leq
\varepsilon_{1}C=\varepsilon$. Here the constant $C$ depends only on the
kernel $\mathbf{K}(x,t;h)$.
\end{proof}

\begin{remark}
\label{Rem u at 0}When $t=0$ the following relations are valid%
\[
u_{m}(x,0)=
\begin{cases}
\varphi_{\frac{m+1}{2}}(x), & m\text{ odd}\\
0, & m\text{ even}%
\end{cases}
\]
and
\[
\frac{\partial u_{m}(x,0)}{\partial t}=%
\begin{cases}
0, & m\text{ odd}\\
\frac{m}{2}\,\varphi_{\frac{m}{2}-1}(x), & m\text{ even}.
\end{cases}
\]
These relations follow directly from the definition \eqref{um}. One can write
them also as follows%
\[
u_{2n-1}(x,0)=\varphi_{n}(x),\qquad u_{2n}(x,0)=0,
\]
and
\[
\frac{\partial u_{2n-1}(x,0)}{\partial t}=0,\qquad\frac{\partial u_{2n}%
(x,0)}{\partial t}=n\varphi_{n-1}(x),\qquad\text{for }n=1,2,\ldots.
\]
For $u_{0}$ we have
\[
u_{0}(x,0)=f(x)=\varphi_{0}(x)\qquad\text{and}\qquad\frac{\partial u_{0}%
(x,0)}{\partial t}=0.
\]

\end{remark}

\section{Solution of the Cauchy problem\label{SectCauchyProblem}}

Consider the following initial value problem%
\begin{equation}
\square u-q(x)u=0,\qquad-b\leq x\leq b,\quad t\geq0 \label{Cauchy1}%
\end{equation}%
\begin{equation}
u(x,0)=g(x),\quad u_{t}(x,0)=h(x) \label{Cauchy2}%
\end{equation}
\begin{wrapfigure}[7]{r}{125\unitlength}
\begin{picture}(125,75)
\put(0,10){\vector(1,0){125}} \put(60,0){\vector(0,1){75}}
\put(117,1){$x$}
\put(52,67){$t$}
\multiput(15,10)(90,0){2}%
{\circle*{2}}
\put(8,0){$-b$} \put(105,0){$b$}
\put(60,55){\circle*{2}}
\put(63,54){$b$}
\thicklines
\put(15,10){\line(1,0){90}}
\put(15,10){\line(1,1){45}}
\put(105,10){\line(-1,1){45}}
\thinlines
\put(21,10){\line(1,2){6}}
\put(27,10){\line(1,2){12}}
\put(33,10){\line(1,2){18}}
\put(39,10){\line(1,2){22}}
\put(45,10){\line(1,2){20}}
\put(51,10){\line(1,2){18}}
\put(57,10){\line(1,2){16}}
\put(63,10){\line(1,2){14}}
\put(69,10){\line(1,2){12}}
\put(75,10){\line(1,2){10}}
\put(81,10){\line(1,2){8}}
\put(87,10){\line(1,2){6}}
\put(93,10){\line(1,2){4}}
\put(99,10){\line(1,2){2}}
\end{picture}
\end{wrapfigure}which for $q\in C[-b,b]$, $g\in C^{2}[-b,b]$, $h\in
C^{1}[-b,b]$ possesses a unique solution (see, e.g., \cite[Sect.
15.4]{Vladimirov}) in the triangle with the vertices $(\pm b,0)$ and $(0,b)$
(see illustration). For the convenience, later in this article we denote this
triangle by the symbol $\blacktriangle$. We assume that $q$ satisfies the
conditions of Theorem \ref{Th Transmutation of Powers K} and begin with the
additional assumption that the functions $g$ and $h$ admit uniformly
convergent series expansions in terms of the functions $\varphi_{k}$,
\begin{equation}
g(x)=\sum\limits_{k=0}^{\infty}\alpha_{k}\varphi_{k}(x)\qquad\text{and}\qquad
h(x)=\sum\limits_{k=0}^{\infty}\beta_{k}\varphi_{k}(x).
\label{g and h as series}%
\end{equation}
We look for a solution of the problem (\ref{Cauchy1}), (\ref{Cauchy2}) in the
form%
\begin{equation}
u(x,t)=\sum\limits_{n=0}^{\infty}a_{n}u_{n}(x,t).
\label{u in the form of a series}%
\end{equation}
Then we have (see Remark \ref{Rem u at 0})
\[
u(x,0)=a_{0}\varphi_{0}(x)+\sum\limits_{n=1}^{\infty}a_{2n-1}u_{2n-1}%
(x,0)=a_{0}\varphi_{0}(x)+\sum\limits_{n=1}^{\infty}a_{2n-1}\varphi_{n}(x)
\]
and
\[
u_{t}(x,0)=\sum\limits_{n=1}^{\infty}a_{2n}\frac{\partial u_{2n}%
(x,0)}{\partial t}=\sum\limits_{n=1}^{\infty}a_{2n}n\varphi_{n-1}%
(x)=\sum\limits_{k=0}^{\infty}a_{2(k+1)}(k+1)\varphi_{k}(x).
\]
Thus, if a solution of the problem (\ref{Cauchy1}), (\ref{Cauchy2}) in the
form (\ref{u in the form of a series}) exists, the expansion coefficients are
obtained directly from the coefficients in the expansions
(\ref{g and h as series}) as follows
\begin{equation}
a_{0}=\alpha_{0}\text{,\quad}a_{2n-1}=\alpha_{n}\text{,\quad}n=1,2,\ldots
\text{\qquad and \quad}a_{2(n+1)}=\frac{\beta_{n}}{n+1}\text{,\qquad
}n=0,1,2,\ldots\text{.} \label{coefficients a}%
\end{equation}

The following natural questions arise. Under which conditions given functions
$g$ and $h$ are representable in the form (\ref{g and h as series}) and
whether such series expansion is unique? Can one guarantee the uniform
convergence of the series (\ref{u in the form of a series}) and that of its
first and second derivatives in a domain of interest? In what follows we
address these questions and show that the described scheme also leads to a
powerful numerical technique for solving the initial value problems for
equation (\ref{Cauchy1}).

\begin{proposition}
\label{Prop Taylor series}A continuous complex-valued function $g$ defined on
$[-b,b]$ admits a series expansion of the form $g(x)=\sum_{k=0}^{\infty}%
\alpha_{k}\varphi_{k}(x)$ uniformly convergent on $[-b,b]$ if and only if
there exists a complex-valued function $\widetilde{g}$ defined on $[-b,b]$
such that $g=\mathbf{T}\widetilde{g}$, $\widetilde{g}(x)=\sum_{k=0}^{\infty
}\alpha_{k}x^{k}$ and the power series converges uniformly on $[-b,b]$. The
expansion coefficients are uniquely defined by the equalities
\begin{equation}
\alpha_{k}=\frac{d_{k}^{f}(g)(0)}{k!}=\frac{\widetilde{g}^{\left(  k\right)
}(0)}{k!}. \label{alphaK}%
\end{equation}

\end{proposition}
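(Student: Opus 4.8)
The plan is to derive the equivalence directly from the two defining features of the transmutation operator: that $\mathbf{T}$ and $\mathbf{T}^{-1}$ are bounded (hence continuous) as operators on $C[-b,b]$ with the uniform norm, and that $\mathbf{T}x^{k}=\varphi_{k}$ for every $k\in\mathbb{N}_{0}$ by Theorem~\ref{Th Transmute}, which also gives $\mathbf{T}^{-1}\varphi_{k}=x^{k}$. For sufficiency, assume $\widetilde{g}(x)=\sum_{k}\alpha_{k}x^{k}$ converges uniformly on $[-b,b]$ and $g=\mathbf{T}\widetilde{g}$. Applying the continuous operator $\mathbf{T}$ to the uniformly convergent partial sums and passing to the limit yields $g=\mathbf{T}\widetilde{g}=\sum_{k}\alpha_{k}\mathbf{T}x^{k}=\sum_{k}\alpha_{k}\varphi_{k}$, the convergence again uniform because $\mathbf{T}$ is bounded. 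For necessity, assume $g=\sum_{k}\alpha_{k}\varphi_{k}$ uniformly, set $\widetilde{g}:=\mathbf{T}^{-1}g$, and apply $\mathbf{T}^{-1}$ termwise; since $\mathbf{T}^{-1}\varphi_{k}=x^{k}$, the partial sums $\sum_{k=0}^{N}\alpha_{k}\varphi_{k}$ map to $\sum_{k=0}^{N}\alpha_{k}x^{k}$, which therefore converge uniformly to $\widetilde{g}$. Thus $\widetilde{g}(x)=\sum_{k}\alpha_{k}x^{k}$ uniformly and $g=\mathbf{T}\widetilde{g}$, as required. The two implications are essentially symmetric and use nothing beyond continuity of $\mathbf{T}^{\pm1}$ and the power-mapping property.

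It then remains to identify the coefficients. The equality $\alpha_{k}=\widetilde{g}^{(k)}(0)/k!$ is the classical statement that a power series converging uniformly on $[-b,b]$ has radius of convergence at least $b$, is therefore infinitely differentiable in the interior, and reproduces its Taylor coefficients at the interior point $0$. For the remaining equality $\alpha_{k}=d_{k}^{f}(g)(0)/k!$ I would invoke Proposition~\ref{GeneralDerivTransm}: working on a closed subinterval $[-b',b']$ with $b'<b$, on which $\widetilde{g}\in C^{\infty}$, one has $d_{k}^{f}(g)=\mathbf{T}\widetilde{g}^{(k)}$ for $k$ even and $d_{k}^{f}(g)=\mathbf{T}_{D}\widetilde{g}^{(k)}$ for $k$ odd. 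Since both Volterra operators have the form $u(x)+\int_{-x}^{x}(\cdots)u(t)\,dt$, whose integral term vanishes at $x=0$, they preserve the value at the origin, so that $d_{k}^{f}(g)(0)=\widetilde{g}^{(k)}(0)=k!\,\alpha_{k}$. Uniqueness of the $\alpha_{k}$ is then immediate from either formula.

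The main obstacle is a regularity-matching issue: Proposition~\ref{GeneralDerivTransm} requires its argument to lie in $C^{n}$, whereas uniform convergence of $\sum_{k}\alpha_{k}x^{k}$ on the closed interval only guarantees smoothness of $\widetilde{g}$ in the open interval, since the endpoint behaviour may be poor when the radius of convergence equals $b$. This is circumvented by noting that $d_{k}^{f}(g)(0)$ is a local quantity at the interior point $0$, so one may first restrict to a slightly smaller closed interval on which $\widetilde{g}$ is genuinely $C^{\infty}$ before applying Proposition~\ref{GeneralDerivTransm}. As an alternative to the operator argument, the same coefficient formula follows by differentiating the series $g=\sum_{k}\alpha_{k}\varphi_{k}$ term-by-term using the rules of Remark~\ref{Rem Derivatives phi}, together with $\varphi_{n}(0)=\psi_{n}(0)=0$ for $n\geq1$ and $d_{k}^{f}\varphi_{k}(0)=k!$, so that every term of index $\neq k$ drops out at the origin; justifying the term-by-term $f$-differentiation is precisely the same regularity point, and is again handled by passing to $\widetilde{g}=\mathbf{T}^{-1}g$ and transferring the classical differentiability through $\mathbf{T}$.
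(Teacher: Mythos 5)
Your proof is correct and follows essentially the same route as the paper's: the equivalence is obtained from the linearity and uniform boundedness of the Volterra operators $\mathbf{T}$ and $\mathbf{T}^{-1}$ together with the mapping property $\mathbf{T}[x^{k}]=\varphi_{k}$, and the coefficient identity $d_{k}^{f}(g)(0)=\widetilde{g}^{(k)}(0)$ from Proposition \ref{GeneralDerivTransm} combined with the observation that $\mathbf{T}[u](0)=\mathbf{T}_{D}[u](0)=u(0)$ for any continuous $u$. Your extra step of restricting to a smaller closed interval $[-b',b']$ before invoking Proposition \ref{GeneralDerivTransm} (since uniform convergence of the power series on $[-b,b]$ only guarantees smoothness of $\widetilde{g}$ in the open interval) is a legitimate refinement of a regularity point that the paper's proof passes over silently.
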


\begin{proof}
The proof of the representability of $g$ in the form of a uniformly convergent
series $\sum_{k=0}^{\infty}\alpha_{k}\varphi_{k}(x)$ follows from the uniform
boundedness of the Volterra integral operators $\mathbf{T}$ and $\mathbf{T}%
^{-1}$. The linearity of these integral operators together with the fact that
$\mathbf{T}\left[  x^{k}\right]  =\varphi_{k}$ (Theorem \ref{Th Transmute})
gives us the equality between the coefficients of the corresponding series
expansions of $g$ and $\widetilde{g}$. The equality $d_{k}^{f}%
(g)(0)=\widetilde{g}^{\left(  k\right)  }(0)$, $k=0,1,\ldots$ is a consequence
of Proposition \ref{GeneralDerivTransm} and of the observation that at the
origin $\mathbf{T}\left[  u\right]  (0)=\mathbf{T}_{D}\left[  u\right]
(0)=u(0)$ for any continuous function $u$.
\end{proof}

\begin{proposition}
Suppose that $g\in C[-b,b]$ and for any $k\in\mathbb{N}$ and $x\in(-b,b)$
there exists the generalized derivative $d_{k}^{f}(g)(x)$ such that for any
$[-a,a]\subset(-b,b)$ the inequality holds
\[
\left\vert d_{k}^{f}(g)\right\vert \leq C(a;k)\frac{k!}{b^{k}}%
\]
where the constants $C(a;k)$ do not depend on $x$ and the sequence $C(a;k)$ is
of a subexponential growth ($\overline{\lim}\sqrt[k]{C(a;k)}\leq1$). Then on
$(-b,b)$ $\ $the function $g$ admits a normally convergent generalized Taylor
series expansion
\begin{equation}
g(x)=\sum\limits_{k=0}^{\infty}\alpha_{k}\varphi_{k}(x) \label{genTaylor}%
\end{equation}
and $\alpha_{k}=d_{k}^{f}(g)(0)/k!$.
\end{proposition}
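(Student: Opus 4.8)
The plan is to reduce the statement to the classical fact that a smooth function whose derivatives grow no faster than $C(a;k)\,k!/b^{k}$ with $C(a;k)$ subexponential is real-analytic with radius of convergence at least $b$, and then to transport this analyticity through the transmutation operator. Concretely, I would set $\widetilde{g}=\mathbf{T}^{-1}g$ and show that $\widetilde{g}$ is infinitely differentiable on $(-b,b)$ and that its Taylor series at the origin converges to it normally on $(-b,b)$; applying $\mathbf{T}$ termwise then yields \eqref{genTaylor}, essentially via Proposition \ref{Prop Taylor series}.

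First I would fix an arbitrary subinterval $[-a,a]\subset(-b,b)$ and work on it, exploiting the locality of the Volterra operators $\mathbf{T}$, $\mathbf{T}_{D}$, $\mathbf{T}^{-1}$ (the value of each at $x$ depends only on the argument on $[-|x|,|x|]$, cf.\ Theorem \ref{Th Inverse}), so that the statements established on $[-b,b]$ transfer to $[-a,a]$. Since $g$ possesses $f$-derivatives of all orders on $(-b,b)$, the inverse part of Proposition \ref{GeneralDerivTransm} gives $\widetilde{g}\in C^{\infty}(-b,b)$, and its direct part gives $\widetilde{g}^{(k)}=\mathbf{T}^{-1}d_{k}^{f}(g)$ for even $k$ and $\widetilde{g}^{(k)}=\mathbf{T}_{D}^{-1}d_{k}^{f}(g)$ for odd $k$. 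Because $\mathbf{T}^{-1}$ and $\mathbf{T}_{D}^{-1}$ are bounded on $C[-a,a]$ with some norm $M_{a}$, the hypothesis yields $\max_{[-a,a]}|\widetilde{g}^{(k)}|\le M_{a}\,C(a;k)\,k!/b^{k}$; replacing $C(a;k)$ by $M_{a}C(a;k)$, which does not change the condition $\overline{\lim}\sqrt[k]{C(a;k)}\le1$, I may as well take $M_{a}=1$.

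The main (and essentially only nontrivial) step is the analyticity estimate. Evaluating at the origin and using $\mathbf{T}u(0)=\mathbf{T}_{D}u(0)=u(0)$ gives $\widetilde{g}^{(k)}(0)=d_{k}^{f}(g)(0)$, hence $\alpha_{k}:=d_{k}^{f}(g)(0)/k!=\widetilde{g}^{(k)}(0)/k!$ obeys $|\alpha_{k}|\le C(a;k)/b^{k}$, so $\overline{\lim}\,|\alpha_{k}|^{1/k}\le 1/b$ and the power series $\sum_{k}\alpha_{k}x^{k}$ has radius of convergence at least $b$. To see that it actually represents $\widetilde{g}$, I would use the integral form of Taylor's remainder, valid for a complex-valued $\widetilde{g}$ of a real argument, whose modulus for $|x|\le a$ is bounded by $\max_{[-a,a]}|\widetilde{g}^{(n+1)}|\,|x|^{n+1}/(n+1)!\le C(a;n+1)(a/b)^{n+1}$; since $a/b<1$ and $C(a;n+1)$ is subexponential, this tends to $0$ uniformly on $[-a,a]$. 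Thus $\widetilde{g}(x)=\sum_{k}\alpha_{k}x^{k}$ uniformly on every $[-a,a]$, i.e.\ normally on $(-b,b)$. (For real-valued $g$ and $f$ one could instead invoke Theorem \ref{ThGenTaylorTheorem} directly, estimating its Lagrange remainder $d_{n+1}^{f}(g)(c)\varphi_{n+1}(x)/(n+1)!$ by combining the hypothesis with $|\varphi_{n+1}(x)|\le\|\mathbf{T}\|\,a^{n+1}$.)

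Finally I would push this expansion forward: applying the bounded operator $\mathbf{T}$ to the series, which is uniformly convergent on $[-a,a]$, and using $\mathbf{T}[x^{k}]=\varphi_{k}$ (Theorem \ref{Th Transmute}) gives $g=\mathbf{T}\widetilde{g}=\sum_{k}\alpha_{k}\varphi_{k}$ uniformly on $[-a,a]$, hence normally on $(-b,b)$, with $\alpha_{k}=d_{k}^{f}(g)(0)/k!$ as claimed. The principal obstacle here is bookkeeping rather than depth: one must keep every estimate uniform in $x$ over each compact subinterval and verify that the $a$-dependent operator norms and constants $C(a;k)$ enter only multiplicatively, so that neither the radius-of-convergence bound $\ge b$ nor the vanishing of the remainder is affected.
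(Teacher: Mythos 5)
Your proposal is correct and follows essentially the same route as the paper: pass to $\widetilde{g}=\mathbf{T}^{-1}g$, use Proposition \ref{GeneralDerivTransm} to convert the $f$-derivative bounds into bounds $|\widetilde{g}^{(k)}|\le M\,C(a;k)\,k!/b^{k}$, conclude that $\widetilde{g}$ has a normally convergent Taylor series on $(-b,b)$, and transfer back through $\mathbf{T}$ via Proposition \ref{Prop Taylor series}. The only difference is that you spell out the classical analyticity step (remainder estimate) and the locality of the Volterra operators, which the paper's proof leaves implicit.
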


\begin{proof}
Under the conditions of the proposition consider $\widetilde{g}=\mathbf{T}%
^{-1}g$. From Proposition \ref{GeneralDerivTransm} we have that $\widetilde
{g}\in C^{\infty}(-b,b)$ and
\[
\left\vert \widetilde{g}^{\left(  k\right)  }\right\vert \leq M\,C(a;k)\frac
{k!}{b^{k}}%
\]
where $M=\max\left\{  \left\Vert \mathbf{T}^{-1}\right\Vert ,\,\left\Vert
\mathbf{T}_{D}^{-1}\right\Vert \right\}  $. Indeed, considering, e.g., an even
$k$ we obtain $\left\vert \widetilde{g}^{\left(  k\right)  }\right\vert
=\left\vert \mathbf{T}^{-1}d_{k}^{f}(g)\right\vert \leq\left\Vert
\mathbf{T}^{-1}\right\Vert \max\left\vert d_{k}^{f}(g)\right\vert
\leq\left\Vert \mathbf{T}^{-1}\right\Vert C(a;k)\frac{k!}{b^{k}}$ and
analogously for an odd $k$.

From this we obtain that $\widetilde{g}$ admits on $(-b,b)$ a normally
convergent Taylor series expansion of the form $\widetilde{g}(x)=\sum
_{k=0}^{\infty}\alpha_{k}x^{k}$ and due to Proposition
\ref{Prop Taylor series}, $g$ admits a normally convergent generalized Taylor
series expansion (\ref{genTaylor}).
\end{proof}

\begin{proposition}
Let the initial data $g$ and $h$ admit uniformly convergent series expansions
of the form \eqref{g and h as series} on $[-b,b]$. Then the unique (classical)
solution of the Cauchy problem \eqref{Cauchy1}, \eqref{Cauchy2} in
$\blacktriangle$ has the form \eqref{u in the form of a series} which is
uniformly convergent in $\blacktriangle$. The expansion coefficients are
defined by \eqref{coefficients a}.
\end{proposition}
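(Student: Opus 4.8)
The plan is to transmute the entire problem back to the pure wave equation, where d'Alembert's formula and the wave polynomials are available, and then carry the resulting expansion forward with the operator $\mathbf{T}$. First I would invoke Proposition \ref{Prop Taylor series} to write $g=\mathbf{T}\widetilde{g}$ and $h=\mathbf{T}\widetilde{h}$, where $\widetilde{g}(x)=\sum_{k=0}^{\infty}\alpha_{k}x^{k}$ and $\widetilde{h}(x)=\sum_{k=0}^{\infty}\beta_{k}x^{k}$ converge uniformly on $[-b,b]$. The standing hypotheses of the Cauchy problem give $g\in C^{2}[-b,b]$ and $h\in C^{1}[-b,b]$; since $d_{2}^{f}[g]=g''-qg$ and $d_{1}^{f}[h]=h'-(f'/f)h$, the function $g$ has two and $h$ one continuous $f$-derivative, so the inverse part of Proposition \ref{GeneralDerivTransm} yields $\widetilde{g}=\mathbf{T}^{-1}g\in C^{2}[-b,b]$ and $\widetilde{h}=\mathbf{T}^{-1}h\in C^{1}[-b,b]$.

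Next I would let $w$ be the classical ($C^{2}$) d'Alembert solution of the wave Cauchy problem $\square w=0$, $w(x,0)=\widetilde{g}(x)$, $w_{t}(x,0)=\widetilde{h}(x)$, that is $w(x,t)=\tfrac12\bigl(\widetilde{g}(x+t)+\widetilde{g}(x-t)\bigr)+\tfrac12\int_{x-t}^{x+t}\widetilde{h}(\xi)\,d\xi$. Substituting the two power series, using identities (\ref{Re and Im})--(\ref{x+t}) and integrating term by term, I obtain $w=\alpha_{0}p_{0}+\sum_{n\ge1}\alpha_{n}p_{2n-1}+\sum_{n\ge0}\frac{\beta_{n}}{n+1}p_{2(n+1)}=\sum_{n=0}^{\infty}a_{n}p_{n}$ with the coefficients (\ref{coefficients a}). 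On the closed square $\overline{\mathbf{R}}$ (hence on $\blacktriangle$) both $x+t$ and $x-t$ lie in $[-b,b]$, so the two power-series pieces converge uniformly as functions of $(x,t)$, the integrated series converges uniformly because the integration interval has length at most $2b$, and the interleaving of the two uniformly convergent grouped series is again uniformly convergent since consecutive partial sums differ by a term of a uniformly convergent series, which tends to zero uniformly.

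Then I would set $u:=\mathbf{T}_{x}w$, meaning $\mathbf{T}$ applied in $x$ for each fixed $t$. As $\mathbf{T}$ is bounded on $C[-b,b]$ and $w(\cdot,t)=\sum_n a_n p_n(\cdot,t)$ converges uniformly, $\mathbf{T}$ may be applied term by term, and with $u_{n}=\mathbf{T}p_{n}$ from (\ref{umTpm}) this gives $u=\sum_{n=0}^{\infty}a_{n}u_{n}$, uniformly convergent on $\blacktriangle$. It remains to see that $u$ is the classical solution: for each fixed $t$ the intertwining (\ref{TransmutC2}) gives $(-\partial_{xx}+q)u=\mathbf{T}_{x}[-w_{xx}]$, while differentiating under the Volterra integral in $t$ gives $\partial_{tt}u=\mathbf{T}_{x}[w_{tt}]$; since $w_{xx}=w_{tt}$ this yields $(\partial_{xx}-q)u=\partial_{tt}u$, i.e. (\ref{Cauchy1}), and the data emerge as $u(x,0)=\mathbf{T}\widetilde{g}=g$, $u_{t}(x,0)=\mathbf{T}\widetilde{h}=h$. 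By the uniqueness of the classical solution of (\ref{Cauchy1})--(\ref{Cauchy2}) cited from \cite{Vladimirov}, this $u$ is the solution, and by construction its coefficients are those in (\ref{coefficients a}).

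The step I expect to demand the most care is confirming that $u=\mathbf{T}_{x}w$ is a genuine classical solution: one must justify joint $C^{2}$-regularity of $u$ in $(x,t)$ and the pointwise-in-$t$ application of (\ref{TransmutC2}) when $q$ is only continuous, a regime in which the kernel $\mathbf{K}(x,t;h)$ need not solve its Goursat problem classically. An alternative that avoids constructing $u$ by hand is to begin from the already-guaranteed classical solution $u^{*}$, put $w^{*}=\mathbf{T}^{-1}_{x}u^{*}$, and verify via $T^{-1}A=BT^{-1}$ that $w^{*}$ solves the wave Cauchy problem with data $\widetilde{g},\widetilde{h}$; uniqueness for the wave equation then identifies $w^{*}$ with the d'Alembert solution above, and term-by-term application of $\mathbf{T}_{x}$ recovers $u^{*}=\sum a_{n}u_{n}$. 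In either route the regularity bookkeeping is the single delicate point, whereas the series manipulations of paragraph two are routine.
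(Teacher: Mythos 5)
Your proposal is correct, but it takes a genuinely different route from the paper's own proof. The paper never leaves the Klein--Gordon side: the coefficients \eqref{coefficients a} were already fixed by the computation preceding the proposition, each $u_{n}$ is a solution of \eqref{KG}, so the proof reduces to convergence, which is obtained from $u_{n}=\mathbf{T}p_{n}$, the bound $\vert u_{n}\vert\le\Vert\mathbf{T}\Vert\max\vert p_{n}\vert$ on $\blacktriangle$, and the uniform convergence of \eqref{g and h as series}; the series of first and second derivatives are then said to be ``majorized in a similar way'' via Remarks \ref{Rem Second derivative and Factorization} and \ref{Rem Derivatives phi}, so that the equation and the data are checked by term-by-term differentiation. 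You instead transmute the entire problem: pull the data back with $\mathbf{T}^{-1}$ (Proposition \ref{Prop Taylor series}), solve the wave equation by d'Alembert, expand in wave polynomials, and push forward with $\mathbf{T}$, identifying the image with the classical solution via \eqref{TransmutC2} and uniqueness --- precisely the strategy the paper itself uses later for Proposition \ref{Prop Approx Sol Estimate}, but not here. Each approach buys something. Yours never differentiates an infinite series term by term (the only object differentiated is the genuinely $C^{2}$ d'Alembert solution), which sidesteps the weakest step of the paper's argument: uniform convergence of \eqref{g and h as series} does not by itself yield normal convergence of the differentiated series, so the paper's majorization remark is delicate near the boundary. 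The paper's proof, in exchange, needs no discussion of joint $(x,t)$-regularity of $\mathbf{T}_{x}w$, which is exactly the point you must (and honestly do) flag, since \eqref{TransmutC2} is a one-variable statement and $q$ is only continuous. Your fallback closes that gap cleanly: pulling back the already-guaranteed classical solution $u^{*}$, Proposition \ref{GeneralDerivTransm} gives $\mathbf{T}_{x}^{-1}u^{*}\in C^{2}$ in $x$ together with $(\mathbf{T}^{-1}u^{*})_{xx}=\mathbf{T}^{-1}\left[u^{*}_{xx}-qu^{*}\right]$, the $t$-derivatives pass under the Volterra integral, and wave-equation uniqueness identifies $\mathbf{T}_{x}^{-1}u^{*}$ with your d'Alembert solution $w$. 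One last remark: your argument explicitly uses $g\in C^{2}[-b,b]$ and $h\in C^{1}[-b,b]$, which the paper's proof does not invoke; this is legitimate, since without these hypotheses the ``unique classical solution'' in the statement is not guaranteed to exist in the first place.
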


\begin{proof}
As was previously shown if the series (\ref{u in the form of a series})
together with the series corresponding to the first and second partial
derivatives are normally convergent it satisfies equation (\ref{Cauchy1}) as
well as the conditions (\ref{Cauchy2}). Thus, it remains to prove the uniform
convergence of the involved series.

Using (\ref{umTpm}) we have $\left\vert u_{n}\right\vert \leq\left\Vert
\mathbf{T}\right\Vert \cdot\max\left\vert p_{n}\right\vert $, and from
(\ref{Re and Im}) we obtain
\[
\left\vert u_{n}\right\vert \leq b^{n}\left\Vert \mathbf{T}\right\Vert \text{
}%
\]
in the triangle $\blacktriangle$. Now, taking into account the uniform
convergence of the series (\ref{g and h as series}) on $[-b,b]$ we obtain the
uniform convergence of the series (\ref{u in the form of a series}) in
$\blacktriangle$. The series corresponding to the first and second partial
derivatives can be majorized in a similar way with the aid of Remarks
\ref{Rem Second derivative and Factorization} and \ref{Rem Derivatives phi}.
\end{proof}

As was mentioned above in \cite{KMoT} it was proved that any continuous and
piecewise continuously differentiable function on $[-b,b]$ can be approximated
arbitrarily closely by a finite linear combination of the functions
$\varphi_{k}$. The existence of a transmutation operator allows to show that
the condition of piecewise continuous differentiability is superfluous and provides a simple proof of the following proposition.

\begin{proposition}
\label{PropApproximabilityByPhi} Under the conditions of Theorem
\ref{Th Transmutation of Powers K} the system $\{\varphi_{k}\}_{k=0}^{\infty}$
is complete in $C[-b,b]$, i.e., any continuous function on $[-b,b]$ can be
approximated arbitrarily closely by a finite linear combination of the
functions $\varphi_{k}$.
\end{proposition}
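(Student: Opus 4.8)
The plan is to reduce the completeness of $\{\varphi_k\}_{k=0}^\infty$ in $C[-b,b]$ to the classical Weierstrass approximation theorem via the transmutation operator $\mathbf{T}$ and its inverse. The key observation is that $\mathbf{T}[x^k] = \varphi_k$ (Theorem \ref{Th Transmute}), so finite linear combinations of the $\varphi_k$ are exactly the images under $\mathbf{T}$ of ordinary polynomials. Since both $\mathbf{T}$ and $\mathbf{T}^{-1}$ are bounded operators on $C[-b,b]$ (they are Volterra integral operators, as noted after Theorem \ref{Th Inverse}), approximation by polynomials in the domain transfers to approximation by $f$-polynomials in the range.

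First I would fix an arbitrary $g \in C[-b,b]$ and an arbitrary $\varepsilon > 0$, and set $\widetilde{g} = \mathbf{T}^{-1}g$. Because $\mathbf{T}^{-1}$ maps $C[-b,b]$ into itself, $\widetilde{g}$ is continuous on $[-b,b]$. By the classical Weierstrass theorem there is a polynomial $P(x) = \sum_{k=0}^{N}\alpha_k x^k$ with $\max_{[-b,b]}|\widetilde{g}(x) - P(x)| < \varepsilon/\|\mathbf{T}\|$. Next I would apply $\mathbf{T}$ to $P$: by linearity and Theorem \ref{Th Transmute},
\[
\mathbf{T}P = \sum_{k=0}^{N}\alpha_k \mathbf{T}[x^k] = \sum_{k=0}^{N}\alpha_k \varphi_k,
\]
which is a finite linear combination of the $\varphi_k$. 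Finally, estimating in the maximum norm,
\[
\max_{[-b,b]}\Bigl|g - \sum_{k=0}^{N}\alpha_k\varphi_k\Bigr| = \max_{[-b,b]}\bigl|\mathbf{T}\widetilde{g} - \mathbf{T}P\bigr| \leq \|\mathbf{T}\| \cdot \max_{[-b,b]}|\widetilde{g} - P| < \varepsilon,
\]
which establishes the claim.

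There is essentially no hard obstacle here, precisely because all the analytic work has already been packaged into the earlier results. The only points requiring care are formal: one must invoke that $\mathbf{T}^{-1}$ indeed preserves $C[-b,b]$ (so that $\widetilde{g}$ is a legitimate continuous function to which Weierstrass applies), and that $\|\mathbf{T}\| < \infty$ (so the final estimate closes). Both facts are guaranteed by the Volterra integral representations \eqref{Tmain} and \eqref{Tinverse} together with the continuity of the kernel $\mathbf{K}$, and a quantitative bound on $\|\mathbf{T}\|$ is even available from the Corollary following Proposition \ref{PrKEstimate}. If one wished to avoid dividing by $\|\mathbf{T}\|$ (in the degenerate event it were to vanish, which it cannot since $\mathbf{T}$ is invertible), one could simply choose the polynomial approximation of $\widetilde{g}$ to within $\varepsilon/(1+\|\mathbf{T}\|)$ instead.
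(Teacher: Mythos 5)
Your proof is correct and is precisely the argument the paper has in mind: the paper's own proof is a one-line appeal to the existence of the transmutation operator, Theorem \ref{Th Transmute}, and the Weierstrass approximation theorem, and your write-up simply makes that reduction explicit (pull back $g$ by $\mathbf{T}^{-1}$, apply Weierstrass, push the polynomial forward by $\mathbf{T}$ using $\mathbf{T}[x^k]=\varphi_k$ and the boundedness of $\mathbf{T}$). No issues to flag.
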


\begin{proof}
The proof immediately follows from the existence of the transmutation
operator, Theorem \ref{Th Transmute} and the Weierstrass approximation theorem.
\end{proof}

Thus, even when it is not possible to guarantee the representability of the
initial data $g$ and $h$ in the form of uniformly convergent series
(\ref{g and h as series}), they can be approximated by corresponding $f$-
polynomials. The following statement gives us an estimate of the accuracy of
the solution $u$ of the problem (\ref{Cauchy1}), (\ref{Cauchy2}) approximated
by a solution $u_{N}$ corresponding to the approximated initial data.

\begin{proposition}
\label{Prop Approx Sol Estimate} Let $P_{n}(x)=\sum_{k=0}^{n}\alpha_{k}%
\varphi_{k}(x)$ and $Q_{n-1}(x)=\sum_{k=0}^{n-1}\beta_{k}\varphi_{k}(x)$ be
$f$-polynomials, approximating the functions $g$ and $h$ respectively on
$[-b,b]$ in such a way that $\max\left\vert g-P_{n}\right\vert <\varepsilon
_{1}$ and $\max\left\vert h-Q_{n-1}\right\vert <\varepsilon_{2}$. Let
$u_{N}(x,t)=\sum_{k=0}^{N}a_{k}u_{k}(x,t)$, $N=2n$ where $a_{0}=\alpha_{0}$,
$a_{2m-1}=\alpha_{m}$, for $m=1,2,\ldots,n$ and $a_{2(m+1)}=\frac{\beta_{m}%
}{m+1}$, for $m=0,1,\ldots,n-1$. Then
\begin{equation}
\max_{(x,t)\in\blacktriangle}\left\vert u-u_{N}\right\vert <\left\Vert
\mathbf{T}\right\Vert \left\Vert \mathbf{T}^{-1}\right\Vert \left(
\varepsilon_{1}+\varepsilon_{2}b\right)  . \label{estimate}%
\end{equation}

\end{proposition}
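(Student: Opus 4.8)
The plan is to reduce the estimate to the wave equation through the transmutation operator $\mathbf{T}$ and then apply d'Alembert's formula. First I would observe that $u_{N}$ is itself the exact classical solution of the Cauchy problem \eqref{Cauchy1}, \eqref{Cauchy2}, but with the approximating data $P_{n}$, $Q_{n-1}$ in place of $g$, $h$. Indeed, by the prescribed choice of the coefficients $a_{k}$ and the relations of Remark \ref{Rem u at 0} one checks directly that $u_{N}(x,0)=P_{n}(x)$ and $\partial_{t}u_{N}(x,0)=Q_{n-1}(x)$, while each $u_{k}$ solves \eqref{KG}. Hence, by linearity and the uniqueness of the classical solution in $\blacktriangle$, the difference $u-u_{N}$ is exactly the solution of \eqref{Cauchy1} with Cauchy data $g-P_{n}$ and $h-Q_{n-1}$; these belong to $C^{2}$ and $C^{1}$ respectively, since the $f$-polynomials $P_{n},Q_{n-1}$ are smooth.

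Next I would transmute this difference back to the wave equation. Because $u_{m}=\mathbf{T}p_{m}$ (relation \eqref{umTpm}) and $\mathbf{T}$ acts only in the variable $x$ — so that it commutes with $\partial_{t}$ and intertwines the operators via \eqref{TransmutC2} — one writes $u-u_{N}=\mathbf{T}\omega$, where $\omega$ is the solution of the wave equation \eqref{wave equation} with data $\omega(x,0)=\mathbf{T}^{-1}(g-P_{n})(x)$ and $\omega_{t}(x,0)=\mathbf{T}^{-1}(h-Q_{n-1})(x)$. The verification that $\mathbf{T}\omega$ solves \eqref{KG} with the correct data uses only that $\mathbf{T}$ commutes with $\partial_{t}$, the intertwining \eqref{TransmutC2}, and $\mathbf{T}\mathbf{T}^{-1}=I$; the required smoothness of $\omega$ is guaranteed because $\mathbf{T}^{-1}$ preserves regularity (Proposition \ref{GeneralDerivTransm}).

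The d'Alembert representation of $\omega$ then yields, for every $(x,t)\in\blacktriangle$, the bound
\[
|\omega(x,t)|\le\max_{[-b,b]}\bigl|\mathbf{T}^{-1}(g-P_{n})\bigr|+b\,\max_{[-b,b]}\bigl|\mathbf{T}^{-1}(h-Q_{n-1})\bigr|<\|\mathbf{T}^{-1}\|\,(\varepsilon_{1}+b\,\varepsilon_{2}),
\]
since for points of $\blacktriangle$ the characteristics $x\pm t$ and the interval of integration $[x-t,x+t]$ stay inside $[-b,b]$, and the contribution of $\omega_{t}(\cdot,0)$ carries the factor $t\le b$; the final inequality uses $\|\mathbf{T}^{-1}\|$ together with the hypotheses $\max|g-P_{n}|<\varepsilon_{1}$, $\max|h-Q_{n-1}|<\varepsilon_{2}$. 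It remains only to push this estimate forward through $\mathbf{T}$, using, for $(x,t)\in\blacktriangle$,
\[
(u-u_{N})(x,t)=\omega(x,t)+\int_{-x}^{x}\mathbf{K}(x,s;h)\,\omega(s,t)\,ds .
\]

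The step I expect to require the most care is precisely this last one: that applying the operator-norm bound $\|\mathbf{T}\|$ is legitimate \emph{over the triangle}. Since $\mathbf{T}$ is a Volterra operator integrating over $s\in[-x,x]$, I must check that every point $(s,t)$ with $|s|\le|x|$ at fixed height $t$ again lies in $\blacktriangle$. This holds because a point $(x,t)\in\blacktriangle$ satisfies $|x|\le b-t$, whence $|s|\le|x|\le b-t$ forces $(s,t)\in\blacktriangle$ as well, so the integral only samples $\omega$ inside $\blacktriangle$. Granting this, $|(u-u_{N})(x,t)|\le\bigl(1+\int_{-x}^{x}|\mathbf{K}(x,s;h)|\,ds\bigr)\max_{\blacktriangle}|\omega|\le\|\mathbf{T}\|\,\max_{\blacktriangle}|\omega|$, and combining with the displayed bound on $|\omega|$ gives \eqref{estimate}.
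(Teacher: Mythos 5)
Your proposal is correct and follows essentially the same route as the paper's proof: reduce to the wave equation by applying $\mathbf{T}^{-1}$ (the paper transmutes $u$ and $u_{N}$ separately and takes the difference, you transmute the difference directly, which is the same by linearity), invoke the d'Alembert stability estimate to get the factor $\|\mathbf{T}^{-1}\|(\varepsilon_{1}+\varepsilon_{2}b)$, and then push forward through $\mathbf{T}$ to pick up the factor $\|\mathbf{T}\|$. Your explicit verification that the Volterra integration at fixed height $t$ only samples points inside $\blacktriangle$ is a welcome detail that the paper leaves implicit, but it is a refinement of the same argument rather than a different approach.
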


\begin{proof}
Notice that $u_{N}$ is a solution of the Cauchy problem for equation
(\ref{Cauchy1}) with the initial conditions $u_{N}(x,0)=P_{n}(x)$,
$u_{N,t}(x,0)=Q_{n-1}(x)$, $x\in\lbrack-b,b]$. Consider the functions
$\widetilde{u}=\mathbf{T}^{-1}[u]$ and $\widetilde{u}_{N}=\mathbf{T}%
^{-1}[u_{N}]$. They solve the wave equation (\ref{wave equation}) and satisfy
the initial conditions $\widetilde{u}(x,0)=\widetilde{g}(x)$, $\widetilde
{u}_{t}(x,0)=\widetilde{h}(x)$ and $\widetilde{u}_{N}(x,0)=\widetilde{P}%
_{n}(x)$, $\widetilde{u}_{N,t}(x,0)=\widetilde{Q}_{n-1}(x)$, $x\in
\lbrack-b,b]$ where the tilde indicates the image of a corresponding function
under the action of $\mathbf{T}^{-1}$. We have $\max\left\vert \widetilde
{g}-\widetilde{P}_{n}\right\vert <\left\Vert \mathbf{T}^{-1}\right\Vert
\varepsilon_{1}$ and $\max\left\vert \widetilde{h}-\widetilde{Q}%
_{n-1}\right\vert <\left\Vert \mathbf{T}^{-1}\right\Vert \varepsilon_{2}$.
From the d'Alembert formula by analogy with the standard proof of the
stability of the Cauchy problem for the wave equation (see, e.g., \cite[Sect.
4.3]{Pinchover}) we obtain $\max_{(x,t)\in\blacktriangle}\left\vert
\widetilde{u}-\widetilde{u}_{N}\right\vert <\left\Vert \mathbf{T}%
^{-1}\right\Vert \left(  \varepsilon_{1}+\varepsilon_{2}b\right)  $. Finally,
(\ref{estimate}) is obtained from the following $\max_{(x,t)\in\blacktriangle
}\left\vert u-u_{N}\right\vert =\max_{(x,t)\in\blacktriangle}\left\vert
\mathbf{T}\left(  \widetilde{u}-\widetilde{u}_{N}\right)  \right\vert
\leq\left\Vert \mathbf{T}\right\Vert \max_{(x,t)\in\blacktriangle}\left\vert
\widetilde{u}-\widetilde{u}_{N}\right\vert $.
\end{proof}

\section{Approximation by the functions $\{\varphi_{k}\}_{k=0}^{\infty}%
$\label{SectApprox}}

It follows from Proposition \ref{Prop Approx Sol Estimate} that approximations
of continuous functions by $f$- polynomials play a significant role in
constructing approximate solutions of the Cauchy problem
\eqref{Cauchy1}--\eqref{Cauchy2}. The transmutation operator and the relation
between the functions $\varphi_{k}$ and the powers $x^{k}$ made it possible to
prove Proposition \ref{PropApproximabilityByPhi} showing that any continuous
function may be approximated arbitrarily closely by finite linear combinations
of the functions $\varphi_{k}$. In this section we use the transmutation
operators to extend some well-known results of approximation theory (see,
e.g.
\cite{DeVoreLorentz}, \cite{Dzyadyk}, \cite{Timan}) onto approximations by the
functions $\varphi_{k}$ and discuss different ways to construct such
approximations for a given function.

Denote by $\Phi_{n},\ n=0,1,\ldots$ the linear vector space spanned by the
functions $\varphi_{0},\ldots,\varphi_{n}$. It follows from Theorem
\ref{Th Transmute} that the functions $\varphi_{0},\ldots,\varphi_{n}$ are
linearly independent, therefore the space $\Phi_{n}$ is $(n+1)$-dimensional
and the embedding $\Phi_{n}\subset\Phi_{n+1}$ holds for any $n$.

Define by
\[
\mathcal{E}_{n}^{f}(g)=\min_{h_{n}\in\Phi_{n}}\Vert g-h_{n}\Vert
\]
the best approximation of a continuous function $g$ by $f$-polynomials of
degree $n$, i.e., by finite linear combinations $\sum_{k=0}^{n}\alpha
_{k}\varphi_{k}$ (Definition \ref{Defn Pn}). Here $\Vert\cdot\Vert$ denotes
the usual uniform norm on $[-b,b]$. Due to the embedding $\Phi_{n}\subset
\Phi_{n+1}$ the quantity $\mathcal{E}_{n}^{f}(g)$ is monotone decreasing as
$n\rightarrow\infty$. Proposition \ref{PropApproximabilityByPhi} states that
$\mathcal{E}_{n}^{f}(g)\rightarrow0,\ n\rightarrow\infty$ for any function
$g\in C[-b,b]$. It is known in the approximation theory that for some
functions $g$ the convergence rate of $\mathcal{E}_{n}^{f}(g)$ to zero may
result to be arbitrarily slow. Nevertheless additional smoothness properties
of the function $g$ allow one to obtain more precise results on this
convergence rate.

\begin{theorem}
[Direct approximation theorem]\label{Th DirectApprox} Suppose the function $g$
possesses on the segment $[-b,b]$ continuous $f$-derivatives of all orders up
to the order $k$. Then for the best approximation by $f$-polynomials the
following estimates hold for any $n\ge k$
\[
\mathcal{E}_{n}^{f}(g)\le\frac{\Big(\dfrac{\pi b}{2}\Big)^{k} \|\mathbf{T}%
\|\max\{\|\mathbf{T}^{-1}\|, \|\mathbf{T}_{D}^{-1}\|\}}{(n+1)n\cdot\ldots
\cdot(n-k+2)}\|d_{k}^{f}g\|
\]
and
\[
\mathcal{E}_{n}^{f}(g)=\frac{o(1)}{n^{k}},\quad\text{as }n\to\infty.
\]

\end{theorem}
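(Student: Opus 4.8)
The plan is to transplant the classical direct (Jackson--Favard) approximation theorem for ordinary polynomials to $f$-polynomials through the transmutation operator. First I would set $\widetilde{g}=\mathbf{T}^{-1}g$. Because $g$ has continuous $f$-derivatives up to order $k$ on $[-b,b]$, the inverse part of Proposition \ref{GeneralDerivTransm} guarantees $\widetilde{g}\in C^{k}[-b,b]$, and the same proposition supplies the identifications $d_{k}^{f}(g)=\mathbf{T}\widetilde{g}^{(k)}$ when $k$ is even and $d_{k}^{f}(g)=\mathbf{T}_{D}\widetilde{g}^{(k)}$ when $k$ is odd. Inverting these relations and taking norms gives, in both parities,
\[
\|\widetilde{g}^{(k)}\|\le\max\big\{\|\mathbf{T}^{-1}\|,\,\|\mathbf{T}_{D}^{-1}\|\big\}\,\|d_{k}^{f}g\|.
\]

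Next I would invoke the classical approximation estimate on $[-b,b]$ (see \cite{DeVoreLorentz}, \cite{Dzyadyk}, \cite{Timan}): for $\widetilde{g}\in C^{k}[-b,b]$ and $n\ge k$ there is an ordinary polynomial $P_{n}$ of degree at most $n$ with
\[
\|\widetilde{g}-P_{n}\|\le\Big(\tfrac{\pi b}{2}\Big)^{k}\frac{E_{n-k}\big(\widetilde{g}^{(k)}\big)}{(n+1)n\cdots(n-k+2)},
\]
where $E_{m}(\cdot)$ is the best uniform polynomial approximation of degree $m$; this follows by iterating $k$ times the first-order Favard-type inequality, which for $v\in C^{1}[-b,b]$ reads $E_{m}(v)\le\frac{\pi b}{2(m+1)}E_{m-1}(v')$, each step contributing the factor $\tfrac{\pi b}{2}$ and one denominator term. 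The decisive point is that $\mathbf{T}$ sends the monomial $x^{j}$ to $\varphi_{j}$ (Theorem \ref{Th Transmute}), so $\mathbf{T}P_{n}$ is an $f$-polynomial belonging to $\Phi_{n}$. Using $g=\mathbf{T}\widetilde{g}$ and the boundedness of $\mathbf{T}$,
\[
\mathcal{E}_{n}^{f}(g)\le\|g-\mathbf{T}P_{n}\|=\|\mathbf{T}(\widetilde{g}-P_{n})\|\le\|\mathbf{T}\|\,\|\widetilde{g}-P_{n}\|.
\]
Bounding $E_{n-k}(\widetilde{g}^{(k)})\le\|\widetilde{g}^{(k)}\|$ and combining the three displayed inequalities yields exactly the first estimate of the theorem.

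For the asymptotic refinement $\mathcal{E}_{n}^{f}(g)=o(1)/n^{k}$ I would retain the sharper bound that keeps $E_{n-k}(\widetilde{g}^{(k)})$ rather than $\|\widetilde{g}^{(k)}\|$. Multiplying the chain $\mathcal{E}_{n}^{f}(g)\le\|\mathbf{T}\|\,E_{n}(\widetilde{g})$ by $n^{k}$, the combinatorial factor satisfies $n^{k}/\big((n+1)n\cdots(n-k+2)\big)\to1$, while $E_{n-k}(\widetilde{g}^{(k)})\to0$ since $\widetilde{g}^{(k)}$ is continuous (Weierstrass theorem). Hence $n^{k}\mathcal{E}_{n}^{f}(g)\to0$, which is the claimed decay.

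The routine ingredients are the uniform boundedness of $\mathbf{T}$, $\mathbf{T}^{-1}$, $\mathbf{T}_{D}^{-1}$ and the parity bookkeeping in Proposition \ref{GeneralDerivTransm}. I expect the point requiring the most care to be the precise classical estimate with the sharp constant $(\pi b/2)^{k}$ and the factorial denominator $(n+1)n\cdots(n-k+2)$ together with its refinement in terms of $E_{n-k}(\widetilde{g}^{(k)})$: this Favard-type inequality is what furnishes both the explicit constant in the first bound and the $o(1)$ gain in the second, and the whole argument is then a direct conjugation of that result through the bounded transmutation operators.
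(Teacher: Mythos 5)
Your proof is correct. For the first estimate it coincides with the paper's argument: both set $\widetilde{g}=\mathbf{T}^{-1}g$, use Proposition \ref{GeneralDerivTransm} to get $\widetilde{g}\in C^{k}[-b,b]$ and the bound $\|\widetilde{g}^{(k)}\|\le\max\{\|\mathbf{T}^{-1}\|,\|\mathbf{T}_{D}^{-1}\|\}\,\|d_{k}^{f}g\|$, invoke the Jackson estimate of \cite{Cheney}, and transport the approximating polynomial into $\Phi_{n}$ via Theorem \ref{Th Transmute} and the boundedness of $\mathbf{T}$. Where you genuinely diverge is the second, asymptotic statement. The paper proves $\mathcal{E}_{n}^{f}(g)=o(1)/n^{k}$ by invoking a \emph{second}, different variant of Jackson's theorem (the pointwise Timan--Dzyadyk estimate $E_{n}(h)\le A\,(\rho_{n}(x))^{k}\,\omega\big(h^{(k)};\rho_{n}(x)\big)$ with $\rho_{n}(x)=\sqrt{(b-x)(x+b)}/n+1/n^{2}$), whereas you reuse the same Favard chain but retain the sharper right-hand side $E_{n-k}(\widetilde{g}^{(k)})$ instead of $\|\widetilde{g}^{(k)}\|$, and then conclude by Weierstrass that $E_{n-k}(\widetilde{g}^{(k)})\to 0$, while the combinatorial factor $n^{k}/\big((n+1)n\cdots(n-k+2)\big)$ tends to $1$. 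Both routes are valid; yours is more economical and self-contained, needing only the single iterated inequality $E_{m}(v)\le\frac{\pi b}{2(m+1)}E_{m-1}(v')$ (which is indeed how the constant $(\pi b/2)^{k}$ and the denominator $(n+1)n\cdots(n-k+2)$ arise in \cite{Cheney}) and making the source of the $o(1)$ gain transparent, while the paper's modulus-of-continuity version carries extra quantitative information --- a rate expressible through $\omega(\widetilde{g}^{(k)};1/n)$ and improved behavior near the endpoints --- that is not needed for the conclusion as stated.
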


\begin{proof}
Consider the function $\widetilde{g}=\mathbf{T}^{-1}g$. As follows from
Proposition \ref{GeneralDerivTransm}, $\widetilde{g}\in C^{k}[-b,b]$. A
variant of Jackson's theorem \cite[Chap.4, Sec.6]{Cheney} states that
\[
E_{n}(\widetilde{g})\leq\frac{1}{(n+1)n\cdot\ldots\cdot(n-k+2)}\Big(\dfrac{\pi
b}{2}\Big)^{k}\Vert\widetilde{g}^{(k)}\Vert
\]
where $E_{n}$ denotes the best approximation by algebraic polynomials of
degree $\leq n$. Due to Proposition \ref{GeneralDerivTransm} we have
$\Vert\widetilde{g}^{(k)}\Vert\leq\max\{\Vert\mathbf{T}^{-1}\Vert
,\Vert\mathbf{T}_{D}^{-1}\Vert\}\cdot\Vert d_{k}^{f}g\Vert$. Now the first
statement of the theorem follows from Theorem \ref{Th Transmute}.

The second statement easily follows from another variant of Jackson's theorem
\cite[VI.2]{Dzyadyk}, \cite[5.2.1]{Timan}: if the function $h\in C^{k}[-b,b]$,
then for any $n\ge k$
\[
E_{n}(h)\le A\big(\rho_{n}(x)\big)^{k}\omega\big(\rho_{n}(x)\big),
\]
where $\rho_{n}(x)=\frac{\sqrt{(b-x)(x+b)}}{n}+\frac1{n^{2}}$, $\omega
(t):=\omega(h^{(k)}; t)$ is the modulus of continuity of the derivative
$h^{(k)}$, satisfying $\omega(t)\to0,\ t\to0$, and the constant $A$ does not
depend on $h$ and $n$.
\end{proof}

\begin{remark}
A similar result holds under a weaker condition on the smoothness of the
function $g$, namely, suppose that $g$ possesses on the segment $[-b,b]$
continuous $f$-derivatives of all orders up to the order $k-1$ and the
$f$-derivative of the order $k-1$ is Lipschitz continuous on $[-b,b]$, i.e.,
$|d_{k-1}^{f}g(x)-d_{k-1}^{f}g(y)|\leq M|x-y|$ for some constant $M$ and for
every $x,y\in\lbrack-b,b]$. Then there exists a constant $C>0$ such that
\[
\mathcal{E}_{n}^{f}(g)\leq\frac{C}{n^{k}}\quad\text{for any }n\geq k.
\]
The proof may be done similarly to the proof of Theorem \ref{Th DirectApprox}
with the use of Jackson's theorem \cite[Chap.4, Sec.6]{Cheney} or
\cite[5.2.4]{Timan} and the fact that if a function $g$ is Lipschitz
continuous, then the function $\widetilde{g}=\mathbf{T}^{-1}g$ is Lipschitz
continuous as well.
\end{remark}

The classical reasoning in the proof of an inverse theorem for the function
$\widetilde{g}=\mathbf{T}^{-1}g$ with the application of Markov's inequality
and of an inequality for the derivative of the polynomial (see, e.g.
\cite[4.8.7 and 6.2]{Timan}, \cite[VII.2]{Dzyadyk}) allows us to prove a
partial reverse statement of Theorem \ref{Th DirectApprox}. We show that the
obtained convergence rate of the best approximations is close to optimal.

\begin{theorem}
[Inverse approximation theorem]\label{Th InverseApprox} Suppose that the best
approximations by $f$-polynomials of some function $g$ satisfy for some
integer number $r$ and positive constants $M$ and $\varepsilon$ the
inequality
\begin{equation}
\label{EnfEstim}\mathcal{E}_{n}^{f}(g)\le\frac{M}{n^{r+\varepsilon}}%
\qquad\forall n\in\mathbb{N}.
\end{equation}
Then the function $g$ possesses $f$-derivatives of order $r$ in $(-b,b)$ and
$f$-derivatives of order at least $[r/2]$ at the endpoints, where $[\cdot]$
denotes the integer part of a number.
\end{theorem}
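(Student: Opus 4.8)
The plan is to transplant the problem to the constant-coefficient (power) setting through the transmutation operator, apply the classical Bernstein--Markov inverse theorems there, and carry the resulting smoothness back with Proposition \ref{GeneralDerivTransm}. Set $\widetilde{g}=\mathbf{T}^{-1}g$. Since $\varphi_k=\mathbf{T}[x^k]$ (Theorem \ref{Th Transmute}), the space $\Phi_n$ is exactly the image under $\mathbf{T}$ of the space of algebraic polynomials of degree at most $n$; hence every competitor in $\mathcal{E}_n^f(g)$ has the form $\mathbf{T}P$ with $\deg P\le n$. If $\mathbf{T}P^{\ast}$ realizes the best $f$-polynomial approximation of degree $n$, then $\|\widetilde{g}-P^{\ast}\|=\|\mathbf{T}^{-1}(g-\mathbf{T}P^{\ast})\|\le\|\mathbf{T}^{-1}\|\,\mathcal{E}_n^f(g)$, so the ordinary best polynomial approximation obeys $E_n(\widetilde{g})\le\|\mathbf{T}^{-1}\|\,M\,n^{-(r+\varepsilon)}$ for all $n$. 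Thus it suffices to prove the classical inverse theorem for $\widetilde{g}$ under this decay rate and then translate its conclusion.

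Next I would run the standard telescoping argument on $\widetilde{g}$. Writing $\widetilde{g}=P_1+\sum_{k\ge0}Q_k$ with $Q_k=P_{2^{k+1}}-P_{2^{k}}$ (the $P_m$ being polynomials of best approximation of degree $m$), one has $\deg Q_k\le 2^{k+1}$ and $\|Q_k\|\le 2E_{2^k}(\widetilde{g})\le C\,2^{-k(r+\varepsilon)}$. In the interior, Bernstein's pointwise inequality iterated on nested subintervals gives, on any $[-a,a]\subset(-b,b)$, the bound $\|Q_k^{(r)}\|_{[-a,a]}\le C(a)\,(2^{k+1})^{r}\|Q_k\|\le C'(a)\,2^{-k\varepsilon}$, so $\sum_k Q_k^{(r)}$ converges uniformly on $[-a,a]$ and $\widetilde{g}\in C^r[-a,a]$; letting $a\uparrow b$ yields $\widetilde{g}\in C^r(-b,b)$. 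At the endpoints I would instead use Markov's inequality, whose quadratic exponent forces $\|Q_k^{(j)}\|_{[-b,b]}\le C\,(2^{k+1})^{2j}\|Q_k\|\le C'\,2^{k(2j-r-\varepsilon)}$; this series is summable precisely when $2j<r+\varepsilon$, i.e.\ for $j=[r/2]$ (here $\varepsilon>0$ is used), giving $\widetilde{g}\in C^{[r/2]}[-b,b]$. Note that for every $\varepsilon>0$ the decay already guarantees summability of $\sum_k 2^{-k\varepsilon}$, so no restriction $\varepsilon<1$ and no tracking of the modulus of continuity of the top derivative is needed.

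Finally I would transfer these conclusions back to $g=\mathbf{T}\widetilde{g}$ via Proposition \ref{GeneralDerivTransm}. For the endpoints, $\widetilde{g}\in C^{[r/2]}[-b,b]$ gives directly that $g$ possesses $[r/2]$ $f$-derivatives on all of $[-b,b]$. For the interior I would exploit the Volterra (triangular) structure of $\mathbf{T}$ and $\mathbf{T}^{-1}$: since $(\mathbf{T}^{-1}g)(x)$ depends only on the values of $g$ on $[-x,x]$, the restriction $\widetilde{g}|_{[-a,a]}$ equals $\mathbf{T}^{-1}(g|_{[-a,a]})$ for the transmutation associated with $[-a,a]$, so Proposition \ref{GeneralDerivTransm} applied on $[-a,a]$ converts $\widetilde{g}\in C^r[-a,a]$ into the existence of $r$ $f$-derivatives of $g$ on $[-a,a]$, and hence on $(-b,b)$. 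The main obstacle is the endpoint analysis: the drop from order $r$ in the interior to order $[r/2]$ at the boundary is dictated by the gap between the linear Bernstein exponent and the quadratic Markov exponent, and carrying out the higher-order iteration cleanly (choosing the nested intervals for Bernstein and bookkeeping the Markov powers against the dyadic decay) is the delicate, though entirely classical, step of the argument.
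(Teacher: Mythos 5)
Your proposal is correct and follows essentially the same route as the paper's own proof: pass to $\widetilde{g}=\mathbf{T}^{-1}g$ to get $E_n(\widetilde{g})\le\|\mathbf{T}^{-1}\|Mn^{-(r+\varepsilon)}$, decompose $\widetilde{g}$ as the dyadic telescoping series of best polynomial approximants, use the Bernstein pointwise inequality on interior subintervals and Markov's inequality on all of $[-b,b]$ to sum the differentiated series, and then return to $g$ via Proposition \ref{GeneralDerivTransm}. In fact you make explicit two points the paper leaves implicit --- the exponent bookkeeping $2j<r+\varepsilon$ that yields the order $[r/2]$ at the endpoints, and the localization of Proposition \ref{GeneralDerivTransm} to subintervals $[-a,a]$ via the Volterra structure of $\mathbf{T}$ needed for the interior conclusion.
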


\begin{proof}
Consider the function $\widetilde{g}=\mathbf{T}^{-1}g$. As it follows from
\eqref{EnfEstim} and Theorem \ref{Th Transmute}, there exists a sequence of
polynomials $P_{n}$ such that
\begin{equation}
\Vert\widetilde{g}-P_{n}\Vert\leq\frac{\widetilde{M}}{n^{r+\varepsilon}}%
\qquad\forall n\in\mathbb{N},\label{g-pn}%
\end{equation}
where $\widetilde{M}=M\Vert\mathbf{T}^{-1}\Vert$. Consider the series
\begin{equation}
P_{1}(x)+\sum_{k=0}^{\infty}\big(P_{2^{k+1}}(x)-P_{2^{k}}%
(x)\big).\label{Series Pn}%
\end{equation}
It is uniformly convergent due to the estimate
\begin{equation}
\big|P_{2^{k+1}}(x)-P_{2^{k}}(x)\big|\leq\big|\widetilde{g}-P_{2^{k}%
}\big|+\big|P_{2^{k+1}}-\widetilde{g}\big|\leq\frac{\widetilde{M}%
}{2^{k(r+\varepsilon)}}+\frac{\widetilde{M}}{2^{(k+1)(r+\varepsilon)}}%
\leq\frac{2\widetilde{M}}{2^{k(r+\varepsilon)}},\label{pk1-pk}%
\end{equation}
and as it is easy to see, the sum of the series is equal to $\widetilde{g}$.
To finish the proof, we use two well-known inequalities for the derivative of
the polynomial of order $n$ defined on $[-b,b]$. First,
\[
|P_{n}^{\prime}(x)|\leq\frac{n}{\sqrt{b^{2}-x^{2}}}\Vert P_{n}(x)\Vert
\]
and Markov's inequality
\[
|P_{n}^{\prime}(x)|\leq\frac{n^{2}}{b}\Vert P_{n}(x)\Vert.
\]
From the first inequality and estimate \eqref{pk1-pk} we obtain for any
segment $[-d,d]\subset(-b,b)$
\[
\big|P_{2^{k+1}}^{(r)}(x)-P_{2^{k}}^{(r)}(x)\big|\leq\frac{2\widetilde{M}%
C_{d}\cdot2^{(k+1)r}}{2^{k(r+\varepsilon)}}=\frac{2^{r+1}\widetilde{M}C_{d}%
}{2^{k\varepsilon}},
\]
where the constant $C_{d}$ depends only on $r$ and the segment $[-d,d]$. The
obtained estimate leads to the uniform convergence of the series of $r$-th
derivatives of \eqref{Series Pn} and  hence to the conclusion that
$\widetilde{g}\in C^{r}(-b,b)$. Similarly, the second inequality leads to the
conclusion that $\widetilde{g}\in C^{[r/2]}[-b,b]$. Application of Proposition
\ref{GeneralDerivTransm} finishes the proof.
\end{proof}

Contrary to the $L_{2}$-norm, the problem of explicit finding of a polynomial
of the best uniform approximation can be solved in some special cases only.
But from a practical point of view the exact solution is not that necessary,
it is enough to know a polynomial which is sufficiently close to the best one.
Techniques such as least squares approximation or the Lagrange interpolation
(with specially chosen nodes) work well though in general far from the best,
see \cite{Rivlin}. Below we briefly describe the iterative algorithm of E.
Remez for constructing polynomials arbitrarily close to the best one. Even the
zero step of the algorithm, the so-called Tchebyshev interpolation, usually
gives better results then the Lagrange interpolation. For a detailed
description of the algorithm with implementation details and all the required
proofs we refer to \cite{Remez}, \cite{Meinardus}, \cite{Cheney}.

First we remind some definitions and statements related to Tchebyshev uniform
approximations. See
\cite{DeVoreLorentz}, \cite{Timan} for details.

A linear subspace $V$ of $C[-b,b]$ of (finite) dimension $n+1$ is said to
fulfill the \textbf{Haar condition} if it possesses the property that every
function in $V$ which is not identically zero vanishes at no more than $n$
points of $[-b,b]$. An equivalent condition is that the interpolation problem
is uniquely solvable, i.e., for every set of $n+1$ points $x_{k}%
\ (k=0,1,\ldots,n)$ in $[-b,b]$ and every prescribed vector $(y_{0}%
,y_{1},\ldots,y_{n})$ there exists a unique function $h\in V$ such that
\[
h(x_{k})=y_{k},\quad k=0,1,\ldots,n.
\]
If $V$ is spanned by the functions $h_{0},h_{1},\ldots,h_{n}$, another
equivalent condition is that every determinant
\[%
\begin{vmatrix}
h_{0}(x_{0}) & h_{1}(x_{0}) & \ldots & h_{n}(x_{0})\\
h_{0}(x_{1}) & h_{1}(x_{1}) & \ldots & h_{n}(x_{1})\\
\vdots & \vdots & \ddots & \vdots\\
h_{0}(x_{n}) & h_{1}(x_{n}) & \ldots & h_{n}(x_{n})
\end{vmatrix}
\neq0
\]
for any distinct points $x_{0},x_{1},\ldots,x_{n}$ from $[-b,b]$. The Haar
condition is necessary and sufficient for the unique solvability of the
approximation problem.

A system of linearly independent functions $h_{0},h_{1},\ldots,h_{n}$ is
called a \textbf{Tchebyshev system} if the linear subspace spanned by these
functions satisfies the Haar condition.

\begin{proposition}
Let $f$ be a real-valued non-vanishing continuous function on $[-b,b]$. Then
the system of functions $\{\varphi_{k}\}_{k=0}^{\infty}$ constructed by
\eqref{phik} is a Markov system, i.e., for any $n\in\mathbb{N}_{0}$ the first
$n+1$ functions form a Tchebyshev system and the subspace $\Phi_{n}$ spanned
by these functions satisfies the Haar condition.
\end{proposition}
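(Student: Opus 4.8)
The plan is to prove the stronger Markov-system statement by induction on $n$, and the key realisation is that the induction must be run \emph{simultaneously} on the system $\{\varphi_k\}$ and on the dual system $\{\psi_k\}$ of \eqref{psik}. The reason is that the natural zero-reducing operation available here is the first generalized derivative $d_1^f[g]=f\frac{d}{dx}(g/f)$, and by Remark \ref{Rem Derivatives phi} it sends $\varphi_k$ to $k\psi_{k-1}$ rather than back into the $\varphi$-family; dually, $d_1^{1/f}[G]=\frac1f\frac{d}{dx}(fG)$ sends $\psi_k$ to $k\varphi_{k-1}$. I would therefore establish the joint statement $S(n)$: every nontrivial $\sum_{k=0}^n\alpha_k\varphi_k$ and every nontrivial $\sum_{k=0}^n\beta_k\psi_k$ has at most $n$ zeros on $[-b,b]$. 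Since $S(n)$ for all $n$ is exactly the Markov (equivalently, nested Haar) property, this delivers the proposition.

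The base case $S(0)$ is immediate because $\varphi_0=f$ and $\psi_0=1/f$ are non-vanishing. For the inductive step I would use a generalized Rolle argument. Given a nonzero $g=\sum_{k=0}^n\alpha_k\varphi_k$, the quotient $g/f=\sum_k\alpha_k(\varphi_k/f)$ is a linear combination of the iterated integrals $X^{(k)}$, $\widetilde X^{(k)}$ of \eqref{X1}--\eqref{X3}, hence is $C^1$ even though $f$ is only continuous; thus $d_1^f[g]=f(g/f)'$ is well defined and continuous. If $g$ has $N$ distinct zeros, so does $g/f$ (as $f\neq0$), and ordinary Rolle produces at least $N-1$ zeros of $(g/f)'$, hence of $d_1^f[g]=\sum_{k=1}^n k\alpha_k\psi_{k-1}$. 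If $\alpha_1,\dots,\alpha_n$ are not all zero this is a nontrivial combination of $\psi_0,\dots,\psi_{n-1}$, so $S(n-1)$ gives $N-1\le n-1$, i.e. $N\le n$; otherwise $g=\alpha_0 f$ is non-vanishing and $N=0$. The $\psi$-part of $S(n)$ is proved symmetrically, using $d_1^{1/f}[G]=\frac1f(fG)'$ (with $fG$ again a $C^1$ combination of the recursive integrals), the dual relation $d_1^{1/f}\psi_k=k\varphi_{k-1}$, and the $\varphi$-part of $S(n-1)$.

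The only place where care is genuinely needed is the interplay between regularity and the derivative identities: although $f$ is merely continuous, the quotients $\varphi_k/f$ and products $f\psi_k$ are precisely the $C^1$ recursive integrals, so the identities of Remark \ref{Rem Derivatives phi} and their duals remain valid and each Rolle step is legitimate. I would also note that the tempting shortcut through the transmutation operator $\mathbf{T}$ (which carries the Chebyshev system $\{x^k\}$ to $\{\varphi_k\}$ by Theorem \ref{Th Transmute}) is not available here: the present hypothesis provides no potential $q$ with $f''-qf=0$ since $f$ need not be twice differentiable, and in any case a Volterra operator need not preserve the Haar condition. The real content thus lies in the Rolle induction above. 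Equivalently, one may observe that $\{\varphi_k/f\}$ is exactly the extended complete Tchebyshev tower generated by the positive weights $1,f^{-2},f^{2},f^{-2},\dots$, so that multiplication by the non-vanishing $f$ preserves the Chebyshev property; this yields a second, citation-based route to the same conclusion.
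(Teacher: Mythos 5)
Your proof is correct and takes essentially the same route as the paper's: the paper's own (very terse) proof is exactly ``induction\ldots{} using the fact that for the $f$-derivative the Rolle theorem holds,'' supplemented by the alternative deduction from scaled P\'olya/ECT systems in \cite[\S 3.11]{DeVoreLorentz} --- both of which you have spelled out, including the $\varphi_k$/$\psi_k$ alternation that the Rolle induction implicitly requires and the regularity point that the recursive integrals are $C^1$ even though $f$ is merely continuous. The paper simply leaves all of these details to the reader.
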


\begin{proof}
The proof by induction is straightforward using the fact that for the
$f$-derivative the Rolle theorem holds. Also the result may be deduced from
\cite[\S 3.11]{DeVoreLorentz} if we observe that for the real-valued
non-vanishing function $f$ the system $\{\varphi_{k}\}_{k=0}^{n}$ is a scaled
P\'{o}lya system.
\end{proof}

\begin{remark}
\label{HaarFails} As the following example shows, for $f$ being a
complex-valued function the Haar condition may fail for the subspaces
$\Phi_{n}$. Consider $f(x)=e^{ix}$. Then the first two functions $\varphi_{k}$
are $\varphi_{0}=e^{ix}$, $\varphi_{1}=\sin x$, and for large segments the
function $\varphi_{1}$ may have arbitrarily many zeroes.
\end{remark}

Assume that the function $f$ and hence all functions $\varphi_{k}$ are
real-valued (we briefly discuss the complex-valued case at the end of this section).

The Remez algorithm is based on the Tchebyshev theorem with a generalization
by de la Vall\'{e}e Poussin which gives a characterization of the polynomial
of the best approximation \cite[2.7.3]{Timan}.

\begin{theorem}
[Tchebyshev's alternance theorem]\label{Th Tchebyshev} If $P_{n} = \sum
_{k=0}^{n} c_{k}\varphi_{k}$ is a polynomial with respect to some Tchebyshev
system $\{\varphi_{k}\}_{k=0}^{n}$, $g$ is a continuous function and $Q$ is an
arbitrary closed subset of the segment $[a,b]$, then $P_{n}$ is the best
approximation to $g$ on $Q$ if and only if the difference $g(x)-P_{n}(x)$
attains a maximum of its modulus on $Q$, with alternative signs, at least at
$n+2$ distinct points of the given set.
\end{theorem}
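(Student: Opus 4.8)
The plan is to prove both implications directly from the Haar condition, since the transmutation operator $\mathbf{T}$ is not an isometry and therefore does not transport best approximations to best approximations (the earlier direct and inverse theorems only gave norm estimates with constants $\|\mathbf{T}\|$, $\|\mathbf{T}^{-1}\|$, not equalities). I would write $e:=g-P_n$ and $\mu:=\max_{x\in Q}|e(x)|$; the case $\mu=0$ is trivial, so assume $\mu>0$. Throughout I use only that $\{\varphi_k\}_{k=0}^n$ spans an $(n+1)$-dimensional subspace $\Phi_n$ satisfying the Haar condition, i.e. every nonzero element of $\Phi_n$ vanishes at most $n$ times in $[a,b]$.

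For sufficiency, suppose $e$ attains $\pm\mu$ with strictly alternating signs at points $x_0<x_1<\cdots<x_{n+1}$ of $Q$, say $e(x_i)=(-1)^i\sigma\mu$ with a fixed $\sigma\in\{+1,-1\}$. Let $R=\sum_{k=0}^n d_k\varphi_k\in\Phi_n$ be any competitor with $\max_Q|g-R|\le\mu$, and set $D:=R-P_n=(g-P_n)-(g-R)\in\Phi_n$. At each $x_i$ the term $(g-P_n)(x_i)=(-1)^i\sigma\mu$ dominates, so $(-1)^i\sigma D(x_i)\ge0$; this weak alternation over the $n+2$ nodes forces at least $n+1$ sign changes (with nodes where $D$ vanishes counted as zeros), hence at least $n+1$ distinct zeros of $D$ in $[a,b]$. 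By the Haar condition $D\equiv0$, so $R=P_n$; thus $P_n$ is the (unique) best approximation.

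For necessity, assume $P_n$ is best but, seeking a contradiction, that the longest alternation of $e$ on $Q$ has length $m\le n+1$. Let $E:=\{x\in Q:|e(x)|=\mu\}$, which is compact since $Q$ is closed and $e$ continuous. Decompose $E$ into its maximal runs of constant sign, ordered from left to right as $E_1,\ldots,E_s$; adjacent blocks carry opposite signs by maximality, and choosing one point from each block yields an alternation of length $s$, so $s\le n+1$. Since $\max E_i<\min E_{i+1}$, I would pick separators $y_1<\cdots<y_{s-1}$ with $\max E_i<y_i<\min E_{i+1}$. As $s-1\le n$, the Haar condition produces a nonzero $h\in\Phi_n$ vanishing exactly at $y_1,\ldots,y_{s-1}$; such an element of a Tchebyshev system changes sign at each $y_i$ and nowhere else, so it is of one sign on each interval cut out by the $y_i$, with signs alternating. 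Normalizing, I arrange that $h$ agrees in sign with $e$ on every block, so $e\,h>0$ on $E$. The contradiction then comes from the perturbation $P_n+\lambda h$: on a neighborhood of $E$ the product $e\,h$ stays positive, and there $|e-\lambda h|^2=e^2-2\lambda e h+\lambda^2 h^2<\mu^2$ for all small $\lambda>0$; on the complement of that neighborhood $|e|\le\mu-\delta$ for some $\delta>0$ by compactness, so $|e-\lambda h|<\mu$ there as well once $\lambda$ is small. Hence $\max_Q|g-(P_n+\lambda h)|<\mu$, contradicting optimality, so the alternation length is at least $n+2$.

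I expect the necessity direction to be the main obstacle, and within it the construction of $h$: one must exploit the Haar condition not merely to interpolate but to guarantee that the resulting $f$-polynomial changes sign precisely at the separators $y_i$ and nowhere else, so that its sign pattern matches that of $e$ on $E$. The compactness argument upgrading the pointwise inequality $e\,h>0$ on $E$ to a uniform strict improvement on all of $Q$ for small $\lambda$ is routine but requires the separation of $E$ from the region where $|e|<\mu$. (The de la Vall\'ee Poussin refinement would follow the same lines, replacing the extremal value $\mu$ by the values actually attained at a chosen alternating set.)
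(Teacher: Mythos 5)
First, a point of context: the paper does not prove this theorem at all --- it is quoted as a classical fact with a reference to \cite[2.7.3]{Timan} --- so your attempt must stand entirely on its own. Its skeleton is the standard textbook argument and the final compactness--perturbation step of the necessity direction is correct, but there are two genuine gaps, and both trace to the same missing ingredient: you treat ``zeros'' and ``sign changes'' of an element of a Haar space as interchangeable, which they are not without the lemma that in an $(n+1)$-dimensional Haar space the zeros of a nonzero element, counted with interior zeros at which there is no sign change counted \emph{twice}, number at most $n$.

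The first gap is in sufficiency: the inference from the weak alternation $(-1)^i\sigma D(x_i)\ge 0$ at $n+2$ nodes to ``at least $n+1$ distinct zeros of $D$'' is invalid, because a zero of $D$ located at an interior node is shared by the two adjacent pairs. For instance $D(x_0)>0$, $D(x_1)=0$, $D(x_2)>0$ is a weak alternation at three points compatible with a single distinct zero (think of $(x-x_1)^2$). The conclusion $D\equiv 0$ is still true, but only via the doubled-zero counting lemma above. The cheap repair is to note that to prove $P_n$ is \emph{a} best approximation you need only exclude a strictly better competitor, $\max_Q|g-R|<\mu$; then $(-1)^i\sigma D(x_i)>0$ strictly at every node, $D$ acquires a zero in each of the $n+1$ pairwise disjoint open intervals $(x_i,x_{i+1})$, and the Haar condition applies directly. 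Your stronger simultaneous uniqueness claim is exactly what forces the weak inequalities and creates the gap. The second gap is the one you yourself flag and then leave unresolved, and it is the crux of necessity: the existence of a nonzero $h\in\Phi_n$ vanishing precisely at the separators $y_1,\ldots,y_{s-1}$, changing sign at each of them and nowhere else (including, when $s=1$, an element with no zeros at all). The Haar condition as you invoke it --- unique solvability of interpolation --- produces some $h$ vanishing at the $y_i$, but nothing prevents that $h$ from having further zeros, or zeros at the $y_i$ without sign change, and then the sign-matching of $h$ with $e$ on the blocks of $E$ collapses. For algebraic polynomials this step is trivial ($h(x)=\prod_i(x-y_i)$); for a general Tchebyshev system it is a genuine lemma, proved e.g.\ by a determinant construction with coalescing points (see \cite{DeVoreLorentz}, Chapter 3), or, for the Markov system $\{\varphi_k\}$ actually constructed in the paper, by forming the interpolation determinant inside the initial segment $\Phi_{s-1}$ and then again appealing to the doubled-zero count to get the sign changes. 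Since this step is asserted rather than proved, the proof is incomplete at its central point.
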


Such set of $n+2$ points is often called an alternant of the function $g$. An
important consequence of this theorem is that for any continuous function $g$
there exist exactly $n+2$ distinct points $\xi_{0},\ldots,\xi_{n+1}$ from
$[-b,b]$ such that the best approximation of $g$ on the whole segment $[-b,b]$
coincides with the best approximation on this so-called characteristic set of
$n+2$ points. The idea of the Remez algorithm is to construct iteratively
subsets of $[-b,b]$ each of them consisting of $n+2$ points in such a way that
on every step the value of the best approximation on the $n+2$ points subset
be increasing.

In the case when the set $Q$ consists of exactly $n+2$ distinct points
$x_{0}<x_{1}<\ldots<x_{n+1}$, the problem of determination of the best
approximation polynomial $P_{n}$ of $g$ on $Q$ is exactly solvable and reduces
to the solution of the system of $n+2$ linear equations
\begin{equation}
\sum_{k=0}^{n}c_{k}\varphi_{k}(x_{j})+(-1)^{j}E(g)=g(x_{j}),\qquad
j=0,1,\ldots,n+1\label{TchebInterp}%
\end{equation}
for the coefficients $c_{k},\ k=0,\ldots,n$ and the value of the best
approximation $E(g)=\mathcal{E}_{n}(g)$ on the set $Q$. The solution of the
problem \eqref{TchebInterp} for given points $x_{0}<x_{1}<\ldots<x_{n+1}$ and
values of the function $g(x_{j})$ in these points is also called
\textbf{Tchebyshev interpolation}. Note that unlike the Lagrange
interpolation, the resulted polynomial does not pass exactly through the given
values of the function but the deviations of the polynomial from the given
values are equal by absolute value at all points and differ only in sign.

Let us describe the iterative algorithm of E. Remez. We are looking for an
$f$-polynomial close to the one giving the best approximation $\mathcal{E}%
_{n}^{f}(g)$ of a given function $g$ by polynomials from $\Phi_{n}$.

We begin with a set $M_{0}$ consisting of $n+2$ distinct points $-b\leq
x_{0}^{(0)}<x_{1}^{(0)}<\ldots<x_{n+1}^{(0)}\leq b$. Corresponding to these
points, using \eqref{TchebInterp} we construct an $f$-polynomial of Tchebyshev
interpolation $g_{0}=\sum_{k=0}^{n}c_{k}\varphi_{k}\in\Phi_{n}$. The function
$g_{0}(x)$ is the best approximation of $g(x)$ on the set $M_{0}$. Denote the
value $E(g)$ obtained from \eqref{TchebInterp} by $E_{0}(g)$, and let
$D_{0}:=\Vert g-g_{0}\Vert$. It follows from Theorem \ref{Th Tchebyshev} and
from the observation that the best approximation on $n+2$ points subset is not
worse than the best approximation on the whole segment $[-b,b]$ that
\[
|E_{0}(g)|\leq\mathcal{E}_{n}^{f}(g)\leq D_{0}=\Vert g-g_{0}\Vert.
\]
Now either $\Vert g_{0}-g\Vert=|E_{0}(g)|$ and we are done, or $\Vert
g-g_{0}\Vert>|E_{0}(g)|$. The idea of E. Remez is to construct a new set
$M_{1}$ which again consists of $n+2$ points, but for which the corresponding
linear functional $E_{1}(g)$ has a larger magnitude than $|E_{0}(g)|$.

There are two possibilities to define the set $M_{1}$. The first is the
so-called single exchange method. Exactly one of the points of $M_{0}$ is
replaced by a new point $\xi$ satisfying $|g(\xi)-g_{0}(\xi)|=\Vert
g-g_{0}\Vert$. The point to be removed is chosen in such a way that the
difference $g-g_{0}$ alternates in sign at the points of the new sequence, it
is not hard to derive an exact table of rules. Renumeration of the points
according to their magnitudes produces the set $M_{1}$.

The second possibility is the general method of E. Remez. It involves
simultaneous exchanges. The function $h_{0}:=g-g_{0}$ possesses at least $n+1$
zeroes $z_{k}^{(0)},\ k=1,\ldots,n+1$ in the interval $(-b,b)$ and
\[
x_{k}^{(0)}<z_{k+1}^{(0)}<x_{k+1}^{(0)},\qquad k=0,1,\ldots,n.
\]
Set $z_{0}^{(0)}=-b$, $z_{n+2}^{(0)}=b$. Now in each interval $J_{k}%
=\big[z_{k}^{(0)},z_{k+1}^{(0)}\big],\ k=0,\ldots,n+1$ we determine a point
$x_{k}^{(1)}$ such that
\[
h_{0}\big(x_{k}^{(1)}\big)\geq h_{0}(x)\qquad\text{for all }x\in
J_{k}\ \text{if}\ \operatorname{sgn}h_{0}\big(x_{k}^{(0)}\big)=1,
\]
and
\[
h_{0}\big(x_{k}^{(1)}\big)\leq h_{0}(x)\qquad\text{for all }x\in
J_{k}\ \text{if}\ \operatorname{sgn}h_{0}\big(x_{k}^{(0)}\big)=-1,
\]
that is, we are looking for a maximum if the difference between $g$ and the
previous approximation is positive, and for a minimum, if the difference is
negative. Note that corresponding maxima and minima always exist. Here we
assumed that $E_{0}(g)\neq0$. If $E_{0}(g)=0$ the points $x_{k+1}^{(1)}$ are
to be chosen as a sequence of points at which $h_{0}(x)$ has alternatively a
maximum and a minimum.

The iteration is repeated until the quantity $\dfrac{D_{k}-E_{k}(g)}{D_{k}}$,
characterizing the closeness of the found $f$-polynomial to the best one, is
not sufficiently small.

Under the condition that in each of the sets $M_{m+1}$ there is a point $\xi$
such that $|h_{m}(\xi)|=\Vert h_{m}\Vert$ both the single and the general
exchange algorithms converge to the best approximation. The convergence speed
is at least linear, i.e., there exists a constant $q<1$ such that
\[
\mathcal{E}_{n}^{f}(g)-|E_{m+1}(g)|\leq q\big(\mathcal{E}_{n}^{f}%
(g)-|E_{m}(g)|\big)
\]
(see \cite{Meinardus} for details). Under some additional assumptions on the
smoothness of the function $g$ and the functions $\{\varphi_{k}\}_{k=0}^{n}$
and the number and type of extremal points of the difference $h=g-\widetilde
{g}$ in $[-b,b]$, where $\widetilde{g}$ is the $f$-polynomial of the best
approximation, the convergence rate is quadratic \cite[Thm. 84]{Meinardus}.
I.e.,  for practical purposes only few iterations are required.

As with any iterative algorithm, an important question is to choose properly a
good initial set $M_{0}$. One of the possibilities is to consider the function
$\widehat{g}$ of the best least-square approximation to $g$ with respect to
the functions $\varphi_{0},\ldots,\varphi_{n}$. It is known \cite[p.
129]{Meinardus} that if the difference $g-\widehat{g}$ does not vanish
identically on $[-b,b]$ then it possesses at least $n+1$ zeroes on $[-b,b]$,
hence it has at least $n+2$ alternating points of maxima and minima. The
coordinates of these extremal points may be considered as the starting set
$M_{0}$.

Another possibility (see \cite[4.1 and 7.2]{Meinardus}) is recommended if it
is necessary to construct approximations of several functions with respect to
the same functions $\varphi_{0},\ldots,\varphi_{n}$. We consider the problem
of finding the best approximation $\widetilde{\varphi}$ of the function
$\varphi_{n+1}$ by the functions $\varphi_{0},\ldots,\varphi_{n}$. The
function $s:=\varphi_{n+1}-\widetilde{\varphi}$ is not identically zero and
possesses exactly $n+2$ extremal points. These extremal points form a good
initial set for the Remez algorithm. In the case when the functions
$\varphi_{k}$ coincide with the powers $x^{k}$, the function $s$ coincides (up
to a constant factor) with the Tchebyshev polynomial $T_{n+1}(x/b)$ and the
extremal points are given by $x_{k}=-b\cos\frac{k\pi}{n+1},\ k=0,\ldots,n+1$.

It is worth mentioning that the approximation problem may be discretized and
interpreted as a linear programming problem and solved by available software.
We take a finite subset $X\subset\lbrack-b,b]$ consisting of points
$x_{1},\ldots,x_{N}$, where $N\geq n+2$. The condition
\[
\max_{x\in X}\bigg|g(x)-\sum_{k=0}^{n}c_{k}\varphi_{k}(x)\bigg|=E
\]
can be written as
\[
-E\leq g(x_{j})-\sum_{k=0}^{n}c_{k}\varphi_{k}(x_{j})\leq E,\qquad
j=1,\ldots,N.
\]
Our problem is to \emph{minimize} the linear function $1\cdot E+0\cdot
c_{0}+\ldots+0\cdot c_{n}$ subject to $2N$ linear constraints
\begin{align*}
E+\sum_{k=0}^{n}c_{k}\varphi(x_{j}) &  \geq g(x_{j}),\qquad j=1,\ldots,N\\
E-\sum_{k=0}^{n}c_{k}\varphi(x_{j}) &  \geq-g(x_{j}),\qquad j=1,\ldots,N.
\end{align*}
The obtained problem can be solved by a variety of methods available for
solving linear programming problems, see \cite{Rice}, \cite{Rivlin} for details.


At the end of this section we return to the case of the complex-valued
function $f$. As was mentioned in Remark \ref{HaarFails} the Haar condition
may fail for the subspace $\Phi_{n}$. Even if the Haar condition holds, there
is no immediate generalization of the Remez algorithm for the complex-valued
case. The reason is that the Remez algorithm is based on the existence of a
characteristic set of a function consisting of exactly $n+2$ points. We remind
that a subset $X\subset\lbrack-b,b]$ is called characteristic for the function
$g$ if the best approximation of $g$ on the whole segment $[-b,b]$ coincides
with the best approximation on the subset $X$, but does not coincide on any
proper subset of $X$. Contrary to the real-valued case in the complex-valued
case a characteristic set may contain any number of points between $n+2$ and
$2n+3$, see \cite{SmirLeb}. There is no simple way to determine the number of
characteristic points for a given function. What is more, the given function
may have several characteristic sets containing different numbers of points.

In the existing algorithms the discretized problem is considered and solved
directly as a nonlinear optimization problem, e.g., a convex programming
problem \cite{BDM1978}, \cite{Watson1990}, or the problem is transformed into
a semi-infinite programming problem with the use of the fact that
$|h|=\max_{\phi\in\lbrack0,2\pi)}\operatorname{Re}\big(e^{i\phi}\cdot h\big)$.
The dual problem is considered and discretized for the second time with
respect to the angle $\phi$ and solved by the simplex method \cite{BDM1978},
\cite{GR1981} or by a Remez-like algorithm \cite{Kovtunets1987},
\cite{Kovtunets1988}, \cite{Tang1988}, \cite{FM1992}. If the obtained
approximation is not sufficiently close to the best one, the optimality
criterium of the best approximation \cite{Rivlin}, \cite{SmirLeb} is
reformulated as a system of nonlinear equations and the Newton iterations are
used to improve the accuracy, see \cite{GR1981}, \cite{Tang1988},
\cite{Watson1990}, \cite{FM1992} for details.

\section{Numerical examples}

In this section we present several numerical examples illustrating the
application of the described results on generalized wave polynomials and
approximation by functions $\{\varphi_{k}\}_{k=0}^{\infty}$ to numerical
solution of the Cauchy problem (\ref{Cauchy1}), (\ref{Cauchy2}). On the first
step the initial data $g$ and $h$ are approximated by $f$-polynomials and then
the approximate solution of the problem (\ref{Cauchy1}), (\ref{Cauchy2}), the
function $u_{N}$ from Proposition \ref{Prop Approx Sol Estimate}, is calculated
on a mesh of points in the triangle from the figure in Section
\ref{SectCauchyProblem} and compared to a corresponding exact solution.
All calculations were performed using Matlab in the machine precision. For the
construction of the system of the functions $\varphi_{k}$ the following
strategy was implemented using two Matlab routines from the Spline Toolbox: on
each step the integrand is approximated by a spline using the command
\texttt{spapi} and then it is integrated using \texttt{fnint}. This leads to a
good accuracy, and the computation of the first 180--200 or even more functions
$\varphi_{k}$ proved to be a completely feasible task. In all the reported
examples the number of subintervals in which the considered segment is divided
when the integrand is approximated by a spline was 3000 and the splines were
of the forth order. In the presented numerical results we specify the
parameter $n$ which is the number of the calculated functions $\varphi_{k}$.

\begin{example}
\label{Ex1}Consider the Cauchy problem
\begin{equation}
\square u-c^{2}u=0,\qquad-b\leq x\leq b,\quad t\geq0, \label{Cauchy1Ex1}%
\end{equation}%
\begin{equation}
u(x,0)=g(x)=\cosh\sqrt{c^{2}-\lambda_{1}^{2}}x,\quad u_{t}(x,0)=h(x)=1,\qquad
c,\lambda_{1}\in\mathbb{C}. \label{Cauchy2Ex1}%
\end{equation}
The exact solution of this problem has the form
\[
u(x,t)=\frac{1}{c}\sin ct+\cos\lambda_{1}t\cosh\sqrt{c^{2}-\lambda_{1}^{2}}x.
\]
The corresponding second-order ordinary differential equation (\ref{SLhom}),
$f^{\prime\prime}-c^{2}f=0$ admits a nonvanishing solution $f(x)=e^{cx}$,
$f(0)=1$. Based on this solution we construct $n$ functions $\varphi_{k}$
defined by (\ref{phik}) and (\ref{X1})--(\ref{X3}) with $x_{0}=0$. The initial
data for this example were chosen such that both $g$ and $h$ admit uniformly
convergent on $[-b,b]$ generalized Taylor series (see Subsection
\ref{Subsect Gen Der and Gen Taylor}) whose expansion coefficients are known
explicitly. Indeed, observe that $g$ and $h$ are solutions of the equation
$v^{\prime\prime}-c^{2}v=\lambda v$ with different values of the parameter
$\lambda$. In the case of $g$: $\lambda=-\lambda_{1}^{2}$ and in the case of
$h$: $\lambda=-c^{2}$. Since $g(0)=1$ and $g^{\prime}(0)=0$ due to Theorem
\ref{ThGenSolSturmLiouville copy(1)} the function $g$ can be represented as
follows
\[
g=u_{1}-cu_{2}=\sum_{k=0}^{\infty}\frac{(-\lambda_{1}^{2})^{k}}{(2k)!}%
\varphi_{2k}-c\sum_{k=0}^{\infty}\frac{(-\lambda_{1}^{2})^{k}}{(2k+1)!}%
\varphi_{2k+1}%
\]
where $u_{1}$ and $u_{2}$ are defined by (\ref{u1u2}) and the series are
uniformly convergent on $[-b,b]$ \ for any finite $b$. Thus, the coefficients
$\alpha_{k}$ from (\ref{g and h as series}) have the form%
\begin{equation}
\alpha_{2k}=\left(  -1\right)  ^{k}\frac{\lambda_{1}^{2k}}{(2k)!},\quad
\alpha_{2k+1}=\left(  -1\right)  ^{k+1}c\frac{\lambda_{1}^{2k}}{(2k+1)!},\quad
k=0,1,.... \label{alphaKEx1}%
\end{equation}
Analogously we obtain
\[
h=u_{1}-cu_{2}=\sum_{k=0}^{\infty}\frac{(-c^{2})^{k}}{(2k)!}\varphi_{2k}%
-c\sum_{k=0}^{\infty}\frac{(-c^{2})^{k}}{(2k+1)!}\varphi_{2k+1},
\]
and the coefficients $\beta_{k}$ from (\ref{g and h as series}) have the form
\begin{equation}
\beta_{2k}=(-1)^{k}\frac{c^{2k}}{(2k)!},\quad\beta_{2k+1}=(-1)^{k+1}%
\frac{c^{2k+1}}{(2k+1)!},\quad k=0,1,2.... \label{betaKEx1}%
\end{equation}

As an example let us take $b=2$, $c=3$ and $\lambda_{1}=1$. Consider the
$f$-polynomials $P_{n}$ and $Q_{n-1}$ from Proposition
\ref{Prop Approx Sol Estimate} with $n=20$ obtained by truncating the
generalized Taylor series (\ref{g and h as series}). Figure \ref{Fig1} depicts
the distribution of the absolute error of such approximation of the functions
$g$ and $h$. One can
observe that the apparently simplier function $h\equiv1$ is approximated much
worse ($10^{-3}$ against $10^{-8}$) by the truncated generalized Taylor polynomial.

\begin{figure}[ptb]
\begin{center}
\noindent \includegraphics[
height=2.25in,
width=3in
]%
{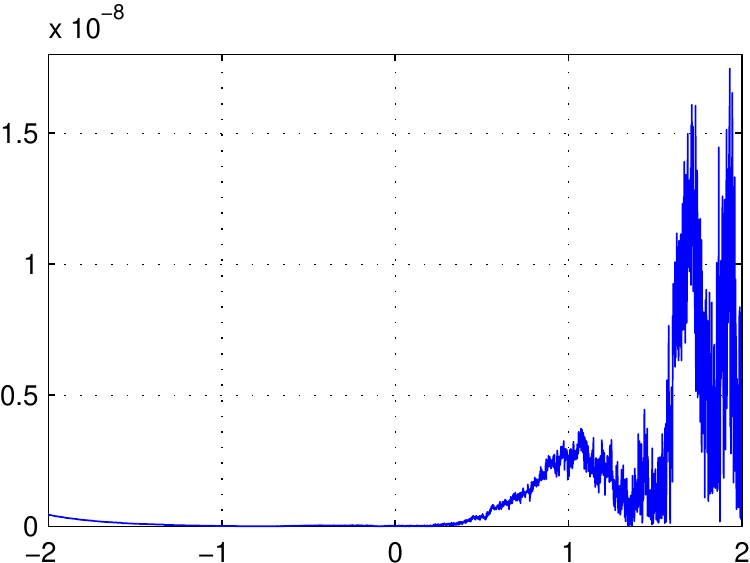}\quad
\includegraphics[
height=2.25in,
width=3in
]%
{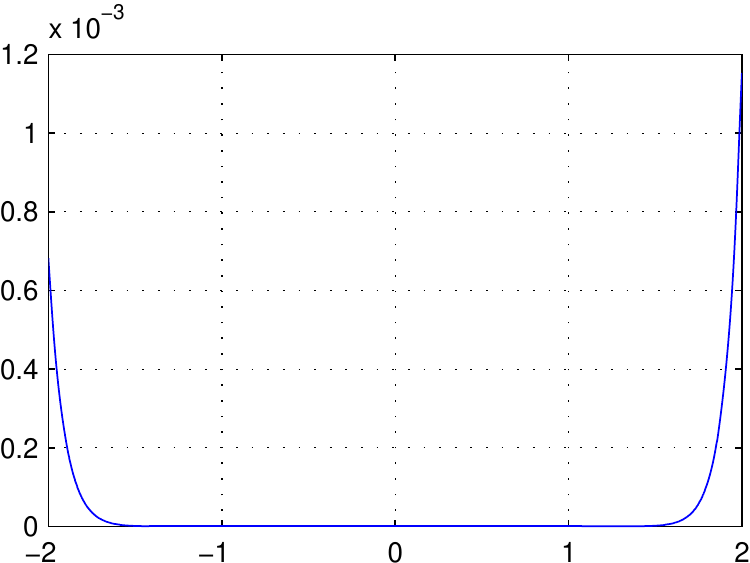}%
\caption{Graphs of $\left\vert g-P_{20}\right\vert $ (on the left) and $\left\vert h-Q_{19}\right\vert $ (on the right) from Example \ref{Ex1}
with the coefficients $\alpha_{k}$ and $\beta_{k}$ obtained by (\ref{alphaK}) which in this
case reduces to (\ref{alphaKEx1}) and (\ref{betaKEx1}).}%
\label{Fig1}%
\end{center}
\end{figure}

\begin{figure}[ptb]
\begin{center}
\noindent
\includegraphics[
height=2.25in,
width=3in]%
{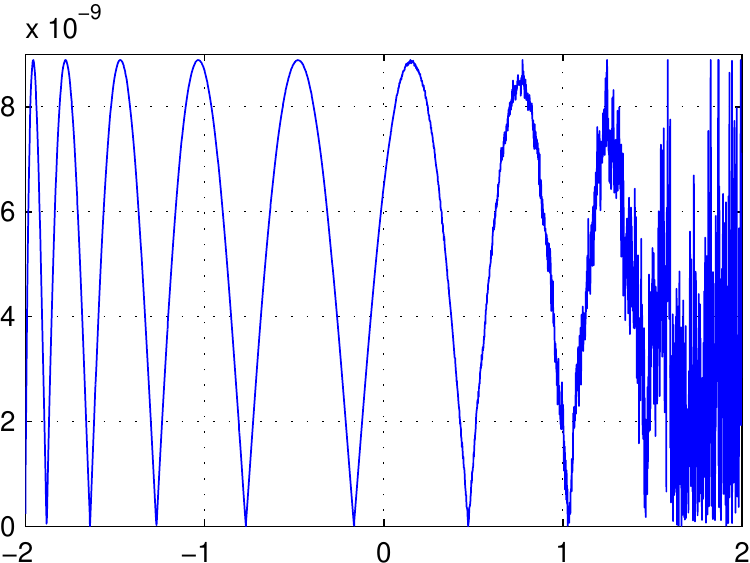}\quad
\includegraphics[
height=2.25in,
width=3in
]%
{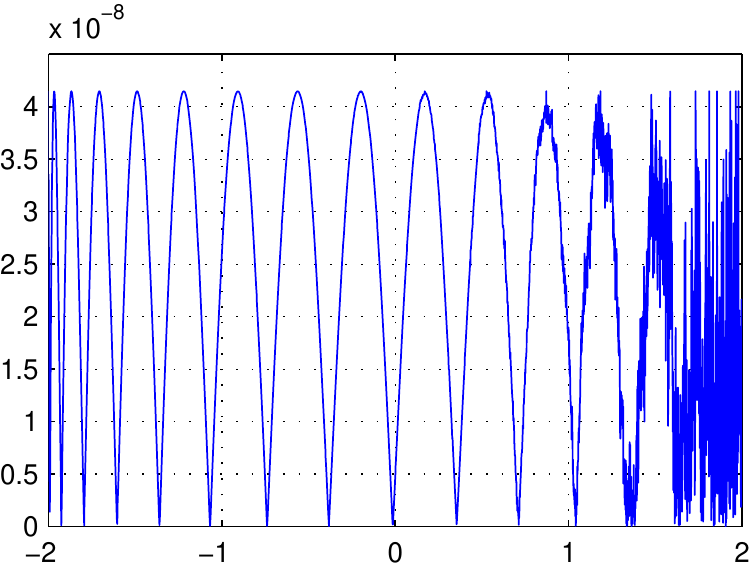}%
\caption{Graphs of $\left\vert g-P_{11}\right\vert $ (on the left) and $\left\vert h-Q_{18}\right\vert $
(on the right) from Example \ref{Ex1}
with the coefficients $\alpha_{k}$ and $\beta_{k}$ obtained by means of the Remez algorithm.}%
\label{Fig2}%
\end{center}
\end{figure}

The distribution of the absolute error of approximation of the solution of the
Cauchy problem (\ref{Cauchy1Ex1}), (\ref{Cauchy2Ex1}) is presented on Figure
\ref{Fig3}. Typically for an approximation based on a Taylor expansion
(generalized or not) the absolute error increases with the distance from the center.%

Obviously, neither always the expansion coefficients of the generalized Taylor
series of the initial data are available in a closed form nor always a
continuous function is representable in the form of such a series. In Section
\ref{SectApprox} several other possibilities for approximating functions by
$f$-polynomials were discussed. In the present example alternatively to the
generalized Taylor expansion we also apply the Remez algorithm (with the
single exchange method). The developed computer program in Matlab establishes
the corresponding value of $n$ for approximating a function by an
$f$-polynomial after which (for $n+1$, $n+2$, etc.) the approximation cannot
be significantly improved limited by the machine precision. Thus, for the
considered example the function $g$ was approximated by $P_{11}$ meanwhile $h$
was approximated by $Q_{18}$. Figure \ref{Fig2} depicts the
distribution of the corresponding absolute error of approximation. The maximum
value of the absolute error for $g$ was of order $10^{-9}$ and for $h$ --
$10^{-8}$.%

The distribution of the absolute error of approximation of the solution of the
Cauchy problem (\ref{Cauchy1Ex1}), (\ref{Cauchy2Ex1}) is presented on Figure
\ref{Fig3}. The maximum absolute error of the approximate solution is of the
order $10^{-8}$ and to the difference from the solution computed previously
with the use of the generalized Taylor coefficients here the distribution of
the error over the domain is more uniform.%

\begin{figure}[ptb]
\noindent
\begin{center}
\includegraphics[
height=2.5in,
width=3in
]%
{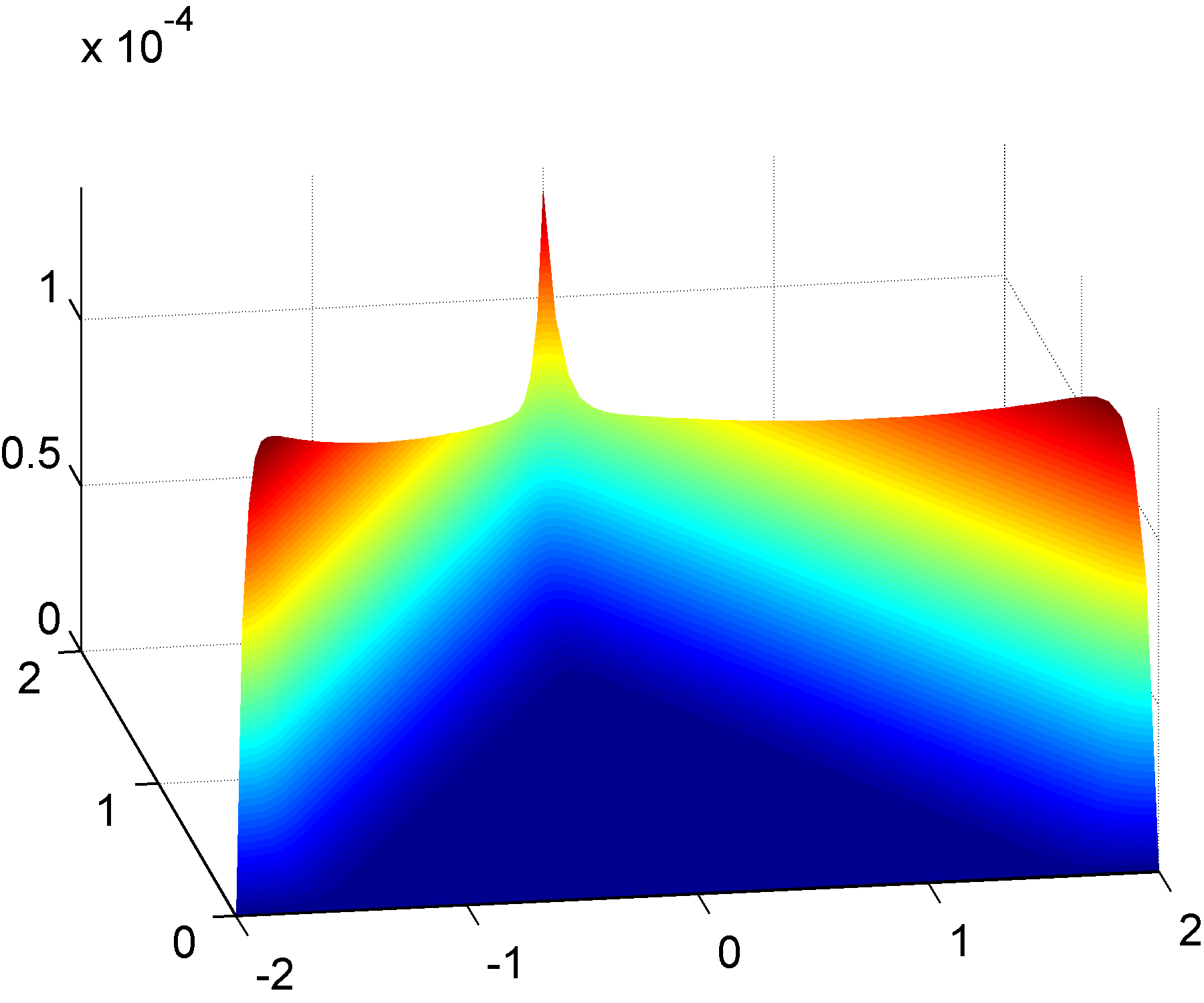}\quad
\includegraphics[
height=2.5in,
width=3in
]{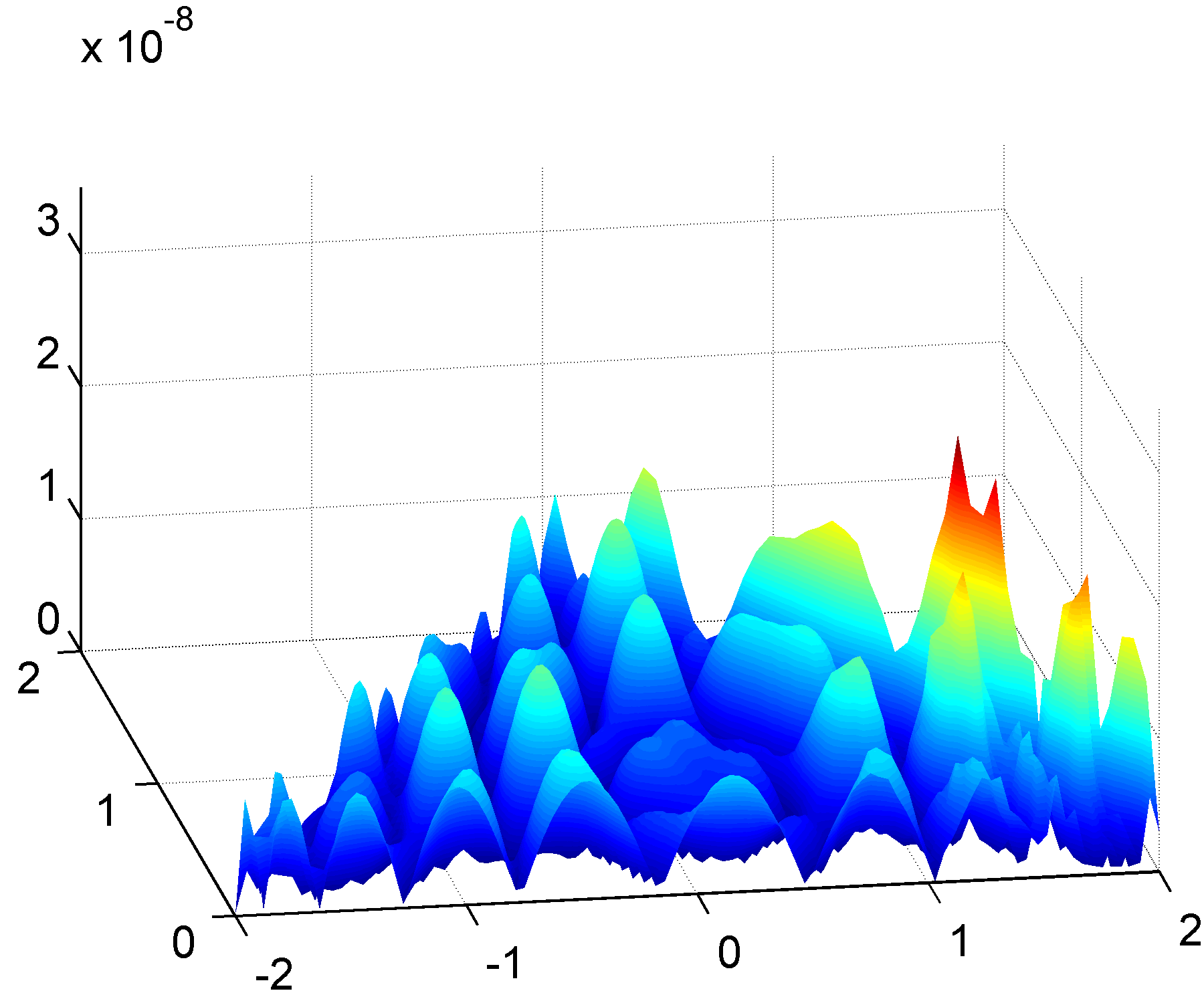}
\caption{The distribution of the absolute error of the approximate solutions of the Cauchy problem from Example \ref{Ex1} computed according to Proposition
\ref{Prop Approx Sol Estimate} with the $f$-polynomials of order $n=20$ and
generalized Taylor coefficients (on the left) and with the $f$-polynomials of
order $11$ (for $g$)  and $18$ (for $h$) with the coefficients obtained by the Remez
algorithm (on the right).}%
\label{Fig3}%
\end{center}
\end{figure}
\end{example}

\begin{example}
\label{Ex2} In this example we consider the same problem (\ref{Cauchy1Ex1}),
(\ref{Cauchy2Ex1}), again with $b=2$ and $\lambda_{1}=1$ but now with another
value of $c$, $c=5i$. Again we have the generalized Taylor coefficients in a
closed form (\ref{alphaKEx1}) and (\ref{betaKEx1}) but application of the
Remez algorithm encounters an obstacle. As the function $f(x)=e^{cx}$ is
complex valued (see Remark \ref{HaarFails}) the same is true for the functions
$\varphi_{k}$ meanwhile as was explained in Section \ref{SectApprox} the Remez
algorithm is directly applicable only to real valued functions.

Here in order to find the coefficients of the $f$-polynomials from Proposition
\ref{Prop Approx Sol Estimate} by a distinct from the generalized Taylor
formula method we solve the corresponding linear programming problem as
explained at the end of Section \ref{SectApprox}.

How well the approximation based on the generalized Taylor formula works is
shown in Figures \ref{Fig4} and \ref{Fig6} where the distribution of the absolute error of
approximation is depicted for $g$, $h$ and the solution $u$ respectively in
the case $n=50$.%

\begin{figure}[ptb]
\begin{center}
\noindent \includegraphics[
height=2.25in,
width=3in
]
{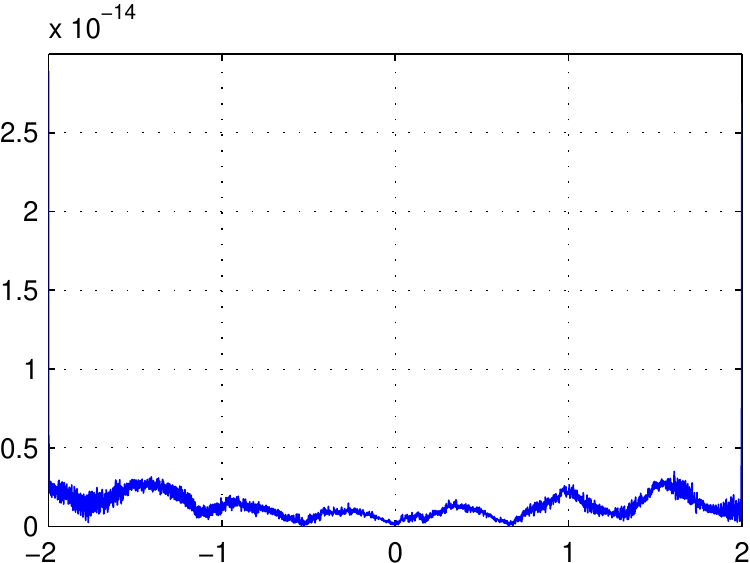}\quad
\includegraphics[
height=2.25in,
width=3in
]%
{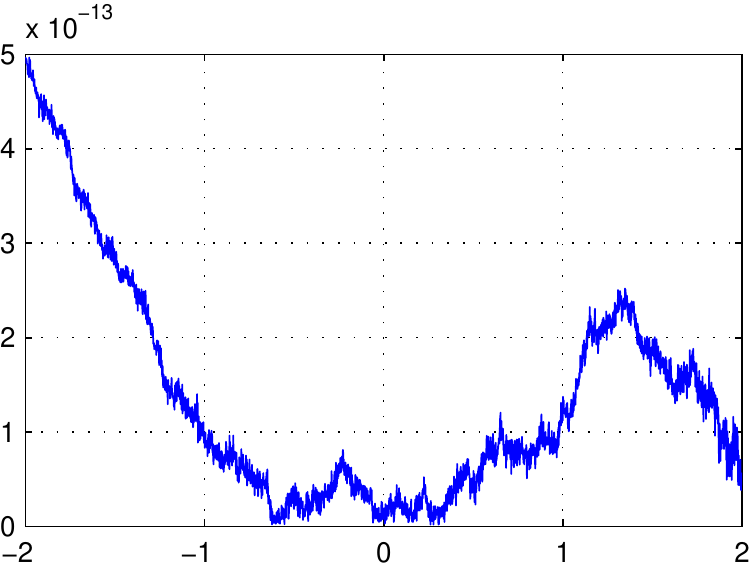}%
\caption{Graphs of $\left\vert g-P_{50}\right\vert $ (on the left) and $\left\vert h-Q_{49}\right\vert $ (on the right) from Example \ref{Ex2}
with the coefficients $\alpha_{k}$ and $\beta_{k}$ obtained by (\ref{alphaK}) which in this
case reduces to (\ref{alphaKEx1}) and (\ref{betaKEx1}).}%
\label{Fig4}%
\end{center}
\end{figure}

The corresponding results obtained by solving the linear programming problem
(again, due to the limitation of the machine precision we used $n=14$ for the approximation of $g$ and $n=25$ for the approximation of $h$) are presented in Figures \ref{Fig5} and \ref{Fig6}.

\begin{figure}[ptb]
\begin{center}
\noindent \includegraphics[
height=2.25in,
width=3in
]
{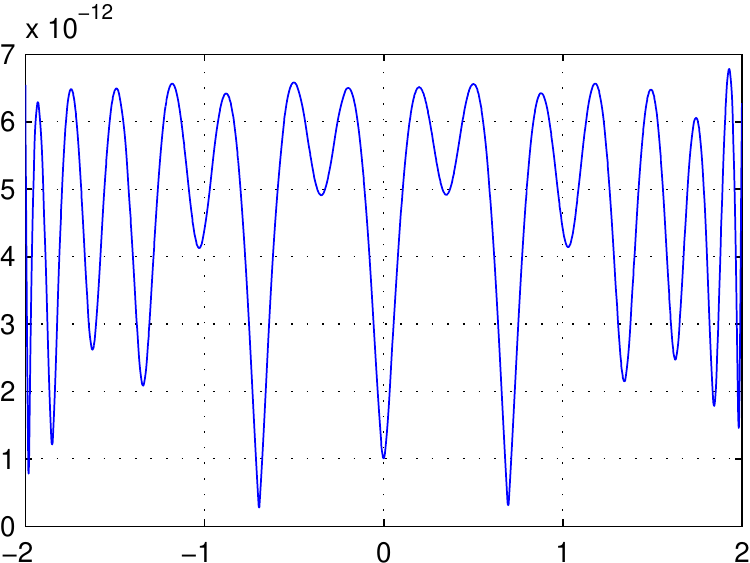}\quad
\includegraphics[
height=2.25in,
width=3in
]%
{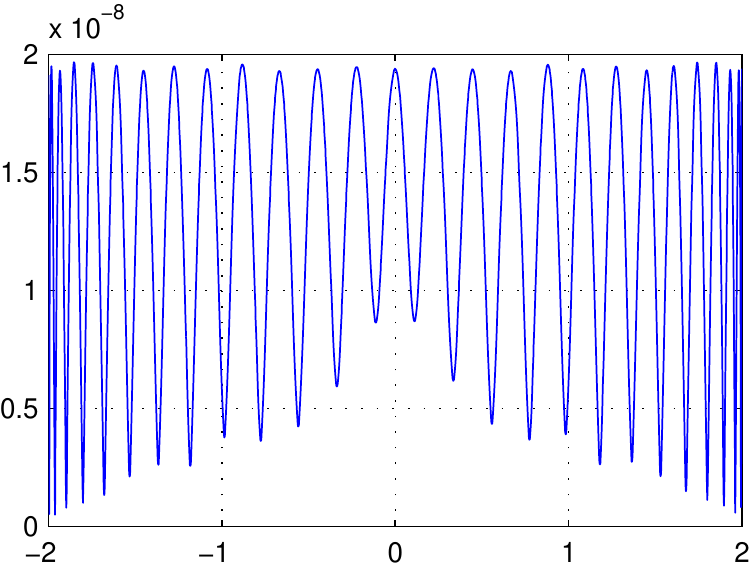}%
\caption{Graphs of $\left\vert g-P_{14}\right\vert $ (on the left) and $\left\vert h-Q_{25}\right\vert $ (on the right) from Example \ref{Ex2}
with the coefficients $\alpha_{k}$ and $\beta_{k}$ obtained by solving a linear programming
problem.}
\label{Fig5}
\end{center}
\end{figure}

\begin{figure}[ptb]
\begin{center}
\noindent\includegraphics[
height=2.5in,
width=3in
]{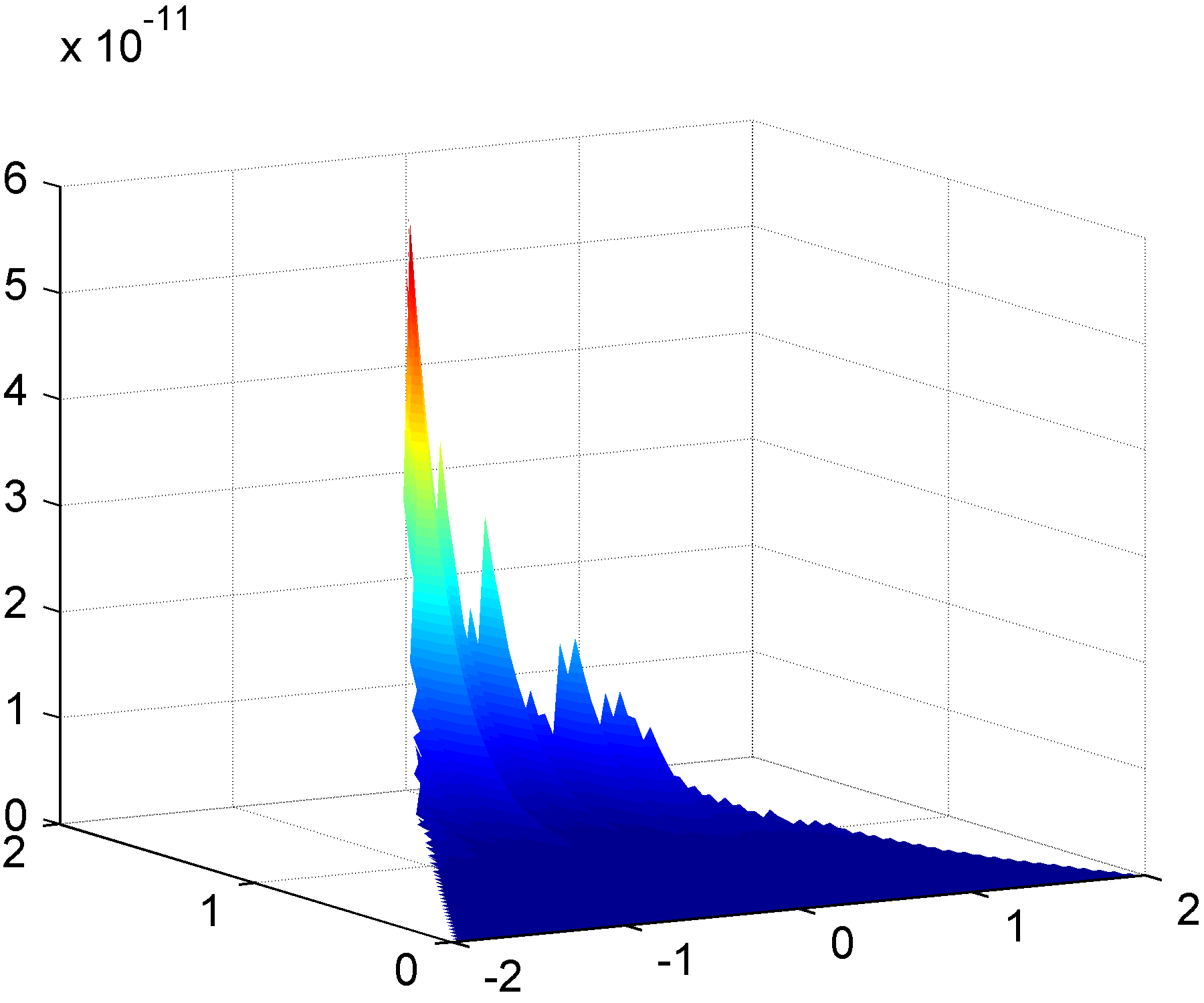}\quad
\includegraphics[
height=2.5in,
width=3in
]%
{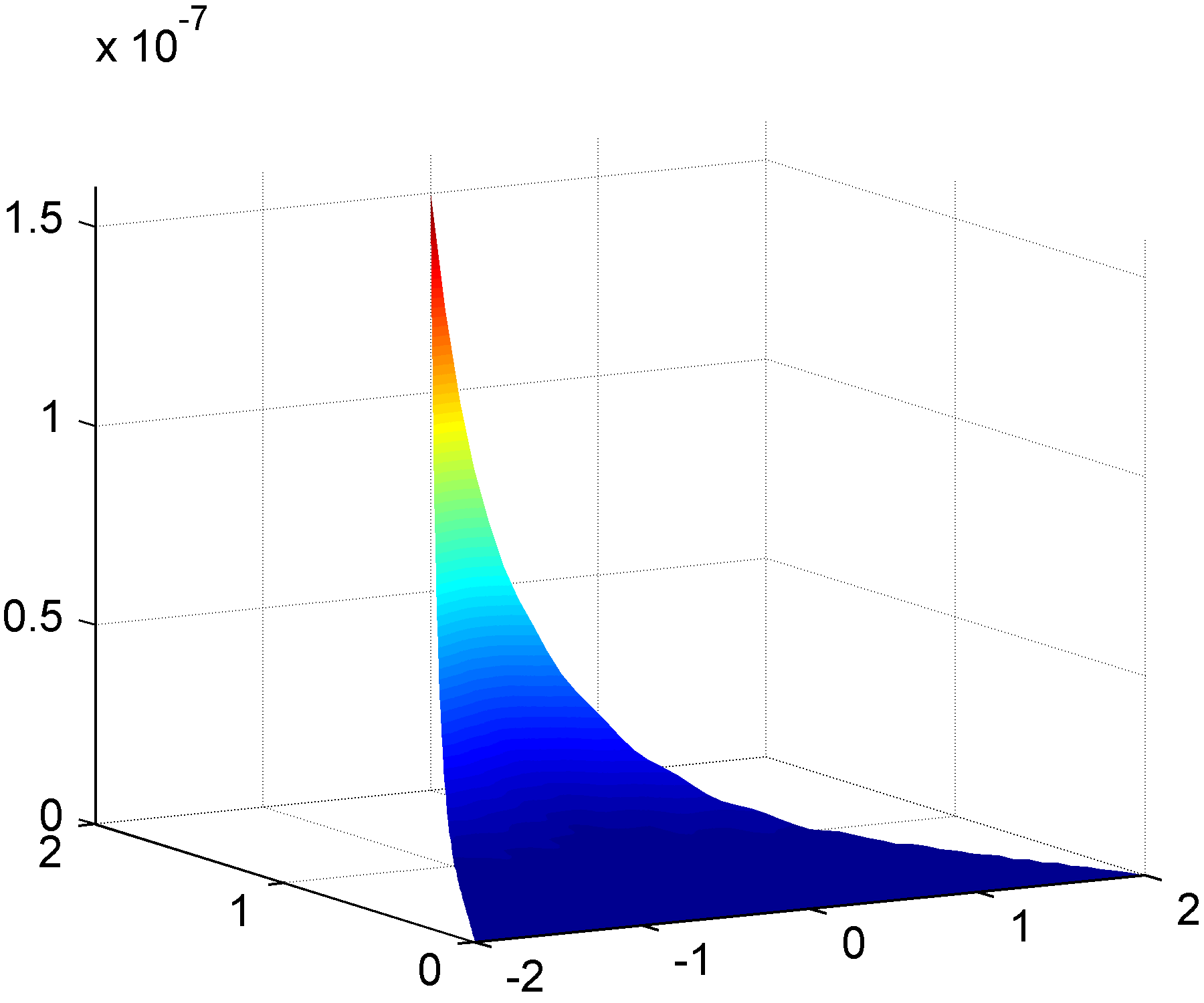}%
\caption{The distribution of the absolute error of the approximate solutions of the Cauchy problem from Example \ref{Ex2} computed according to Proposition
\ref{Prop Approx Sol Estimate} with the $f$-polynomials of order $n=50$ and
generalized Taylor coefficients (on the left) and with the $f$-polynomials of
order $14$ (for $g$)  and $25$ (for $h$) with the
coefficients obtained by solving a linear programming problem (on the right).}%
\label{Fig6}%
\end{center}
\end{figure}
\end{example}

\begin{example}
\label{Ex3} In our final example a variable coefficient equation is
considered. Namely, we solve the following Cauchy problem
\begin{align}
\square u-x^{2}u  &  =0,\quad-b\leq x\leq b,\quad t\geq0,\label{Cauchy1Ex3}\\
u(x,0)  &  =g(x)=e^{x^{2}/2}\biggl(1+\int_{0}^{x}e^{-s^{2}}ds\biggr),\label{Cauchy2Ex3}\\
u_{t}(x,0)  &  =h(x)=xe^{x^{2}/2}. \label{Cauchy3Ex3}%
\end{align}
Its exact solution is given by the expression%
\[
u(x,t)=g(x) \cosh t+h(x) \sinh\sqrt{3}t
\]
The corresponding ordinary second-order equation has the form
\begin{equation}
f^{\prime\prime}-x^{2}f=0. \label{Ex3EqF}%
\end{equation}
The functions $g$ and $h$ are solutions of the equation
\[
v^{\prime\prime}-x^{2}v=\lambda v\text{ }%
\]
with $\lambda=1$ and $\lambda=3$, respectively and hence again we have in our
disposal the possibility to write down the coefficients $\alpha_{k}$ and
$\beta_{k}$ from (\ref{g and h as series}) explicitly. We compute a particular
solution $f$ of (\ref{Ex3EqF}) numerically using the SPPS method as described
in \cite{KrPorter2010}, then compute the functions $\varphi_{k}$ and the
corresponding coefficients $\alpha_{k}$ and $\beta_{k}$ analogously to Example
\ref{Ex1}. The result for $n=20$ is presented by Figures \ref{Fig7} and \ref{Fig8} where
the error of approximation of $g$, $h$ and $u$ is depicted.%

\begin{figure}[ptb]
\noindent\begin{center}
\includegraphics[
height=2.25in,
width=3in
]%
{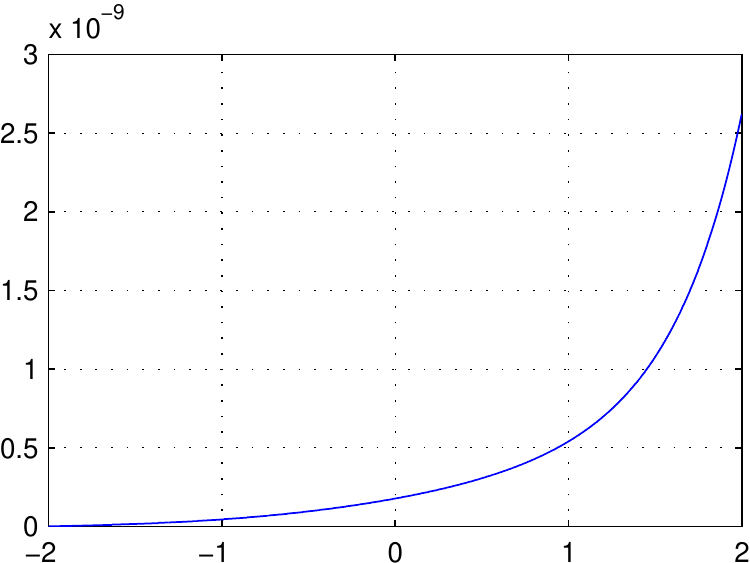}\quad
\includegraphics[
height=2.25in,
width=3in
]%
{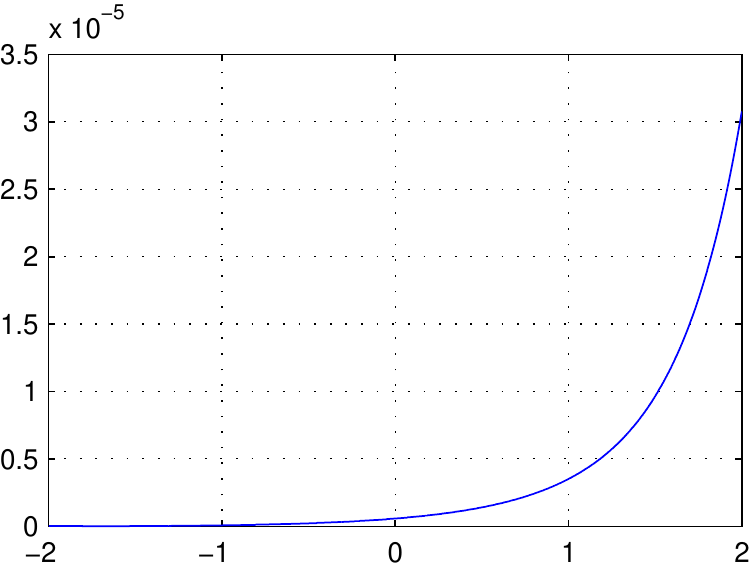}%
\caption{Graphs of $\left\vert g-P_{20}\right\vert $ (on the left) and $\left\vert h-Q_{19}\right\vert $ (on the right)
 from Example \ref{Ex3}
with the exact generalized Taylor coefficients (\ref{alphaK}). }
\label{Fig7}%
\end{center}
\end{figure}

Application of the Remez algorithm delivers the following results. The
functions $g$ and $h$ were approximated by $f$-polynomials of order $16$ and
$19$ respectively and the distribution of the absolute error of the
approximate solution of (\ref{Cauchy1Ex3})--(\ref{Cauchy3Ex3}) is depicted on
Figure \ref{Fig8}. As can be observed with this relatively small number of functions
$\varphi_{k}$ involved in the approximation a remarkable accuracy in the final
solution is achieved of the order $10^{-14}$.%

\begin{figure}[ptb]
\begin{center}
\noindent\includegraphics[
height=2.5in,
width=3in
]%
{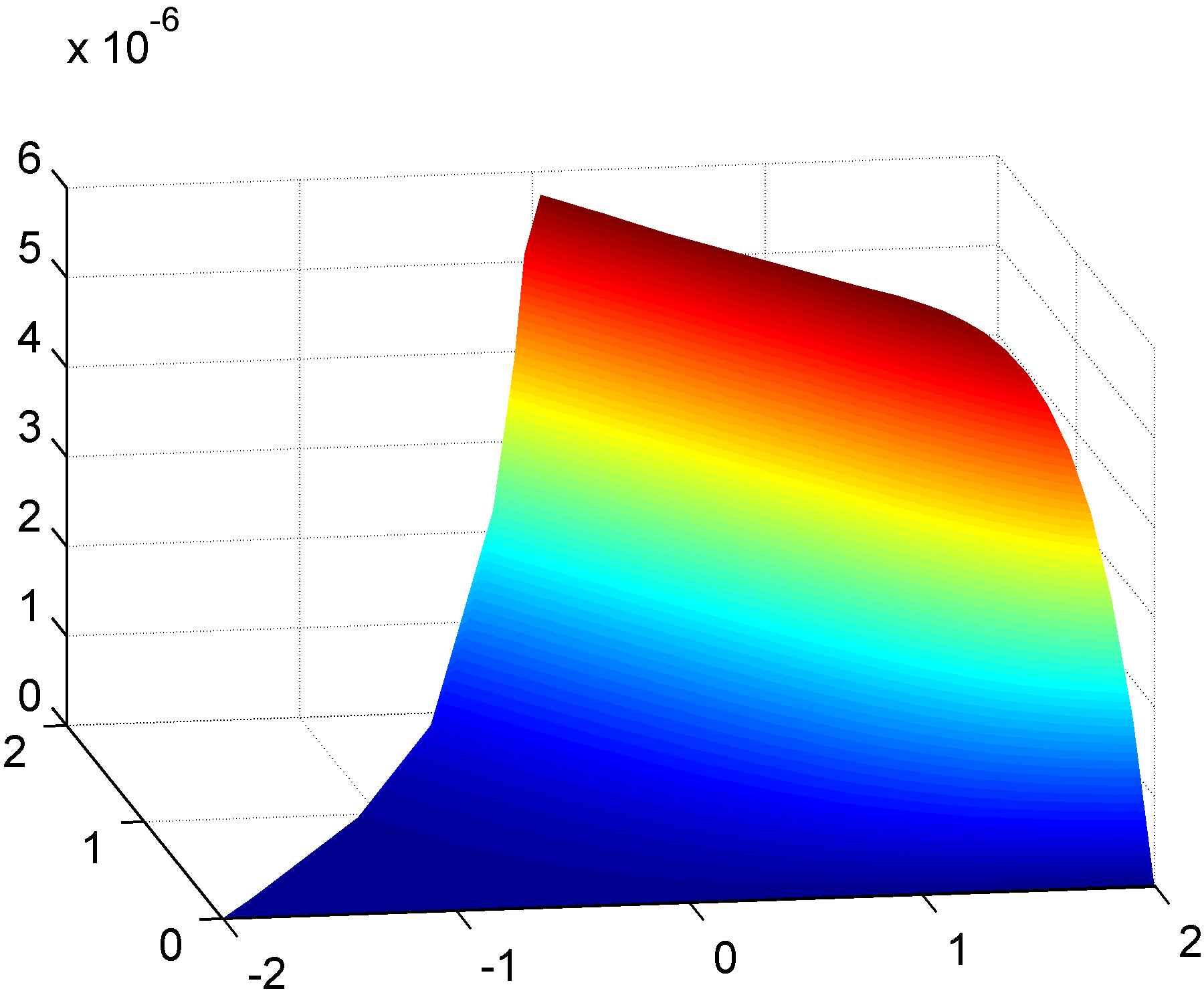}\quad
\includegraphics[
height=2.5in,
width=3in
]%
{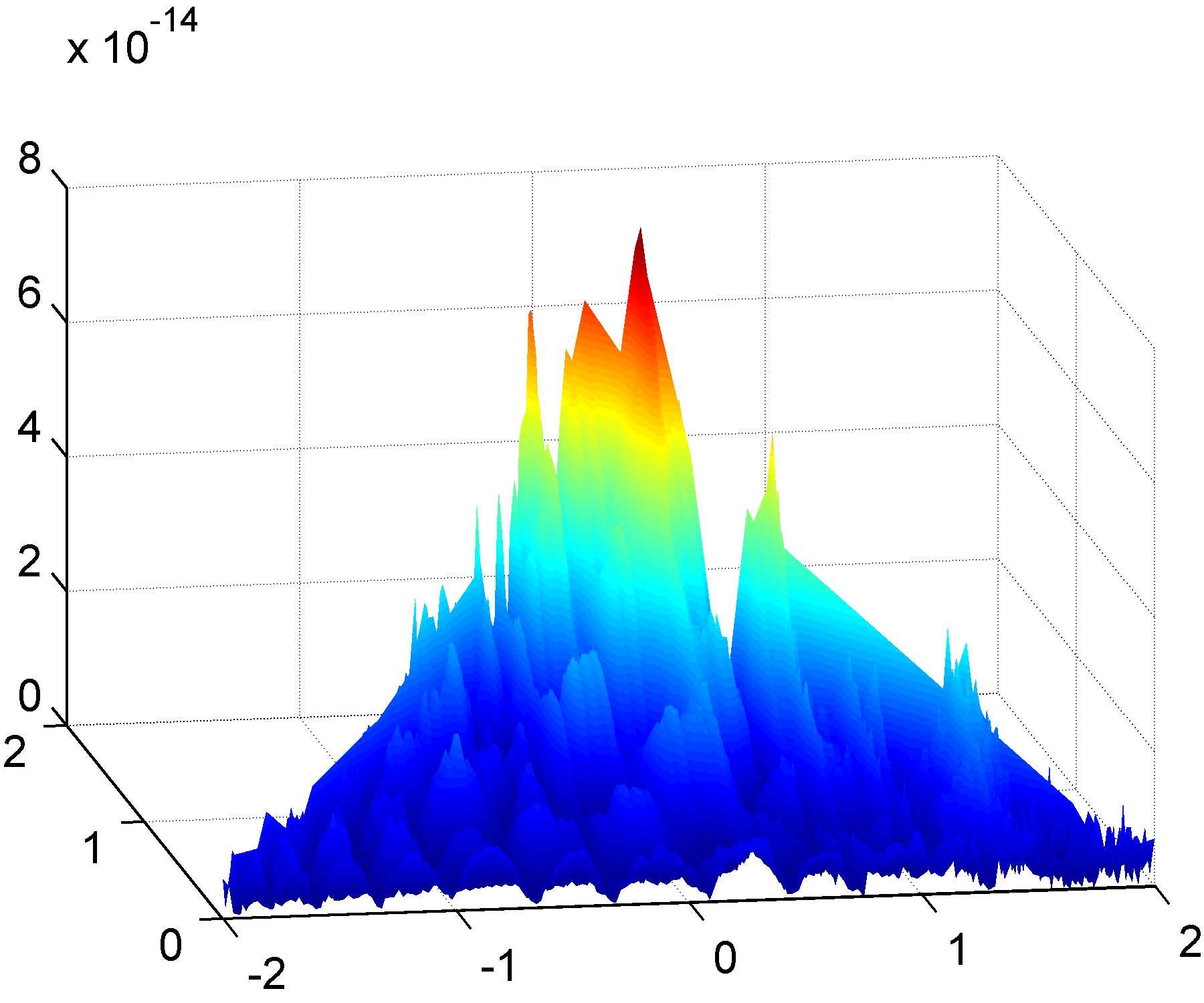}
\caption{The distribution of the absolute error of the approximate solutions
of the Cauchy problem from Example \ref{Ex3} computed according to Proposition
\ref{Prop Approx Sol Estimate} with the $f$-polynomials of order $n=20$ and
generalized Taylor coefficients (on the left) and with the $f$-polynomials of order $16$ (for
$g$) and $19$ (for $h$) with the coefficients obtained by the Remez
algorithm (on the right).}%
\label{Fig8}%
\end{center}
\end{figure}

\end{example}

\end{document}